\documentclass[11pt]{amsart}
\usepackage{amsmath,amsthm, amscd, amssymb, amsfonts, mathtools}
\usepackage{placeins}
\usepackage[all]{xy}
\usepackage{mathrsfs}
\usepackage[inline]{enumitem}
\usepackage{multicol}
\usepackage[T2A,T1]{fontenc}
\usepackage{rotating}
\usepackage{hyperref} 
\usepackage{diagbox}
\usepackage[ansinew]{inputenc}
\usepackage{graphicx,fancyhdr}
\usepackage{color}
\usepackage[normalem]{ulem}
\usepackage{cancel}
\usepackage{nicefrac,xfrac}
\usepackage{float}

\hypersetup{colorlinks=true,citecolor=cyan,linkcolor=blue,linktocpage=true}

\newcommand{\nucleo}{\mathbf R}

\allowdisplaybreaks

\numberwithin{equation}{section}
\newtheorem{theorem}{Theorem}[section]
\newtheorem{lemma}[theorem]{Lemma}
\newtheorem{coro}[theorem]{Corollary}

\newtheorem{prop}[theorem]{Proposition}

\theoremstyle{definition}
\newtheorem{definition}[theorem]{Definition}

\theoremstyle{remark}
\newtheorem{remark}[theorem]{Remark}

\newcommand{\pf}{\begin{proof}}
	\newcommand{\epf}{\end{proof}}

\newcommand{\ku}{ \Bbbk}

\newcommand{\kut}{ \ku^{\times}}

\newcommand{\G}{\mathbb G}

\newcommand{\ghost}{\mathscr{G}}

\newcommand{\I}{\mathbb I}

\newcommand{\N}{\mathbb N}

\newcommand{\Z}{\mathbb Z}

\newcommand{\lmod}[1]{{}_{ #1 }\mathcal{M}}

\newcommand{\cA}{\mathcal{A}}

\newcommand{\cO}{\mathcal{O}}
\newcommand{\cW}{\mathcal{W}}

\newcommand{\cF}{\mathcal{F}}

\newcommand{\cL}{\mathscr{L}}
\newcommand{\Pc}{{\mathcal P}}

\newcommand{\Ss}{{\mathcal S}}

\newcommand{\cV}{\mathcal{V}}
\newcommand{\cX}{\mathcal{X}}

\newcommand{\DLstr}{\mathcal{DL}}

\newcommand{\g}{\mathfrak g}

\newcommand{\Lstr}{\mathfrak L}

\newcommand{\lstr}{\mathfrak l}

\newcommand{\car}{\operatorname{char}}

\newcommand{\id}{\operatorname{id}}

\newcommand{\GK}{\operatorname{GK-dim}}
\newcommand{\Hom}{\operatorname{Hom}}
\newcommand{\ord}{\operatorname{ord}}

\newcommand{\irr}{\operatorname{irrep}}
\newcommand{\Ind}{\operatorname{Ind}}

\newcommand{\toba}{\mathscr{B}}

\newcommand{\ot}{\otimes}

\newcommand{\yd}[1]{{}^{ #1 }_{ #1 }\mathcal{YD}}

\newcommand{\Gb}{\mathbf G}

\newcommand{\xb}{x}
\newcommand{\ub}{u}

\newcommand{\xrightarrowdbl}[2][]{%
	\xrightarrow[#1]{#2}\mathrel{\mkern-14mu}\rightarrow
}

\newcounter{tabla}\stepcounter{tabla}

\begin{document}
	
	\title[On the Drinfeld doubles of Laistrygonian algebras]{On the Drinfeld doubles of Laistrygonian algebras in positive characteristic}

	\author[Andruskiewitsch]{Nicol\'as Andruskiewitsch}
	\address[N. Andruskiewitsch]{CIEM-CONICET, Profesor Em\'erito, Facultad de Matem\'atica, Astronom\'ia, F\'isica y Computaci\'on, Universidad Nacional de C\'ordoba, Medina Allende s/n, Ciudad Universitaria, 5000 C\'ordoba, Argentina
		\vspace{10pt}\newline
		Department of Mathematics and Data Science, Vrije Universiteit Brussel, Pleinlaan 2, 1050 Brussels, Belgium
		\vspace{10pt}\newline
		Shenzhen International Center for Mathematics, Southern University of Science and Technology, Shenzhen 518055, China}
	
	\email{nicolas.andruskiewitsch@unc.edu.ar}
	
	\author[Bagio]{Dirceu Bagio}
	\address[D. Bagio]{Departamento de Matem\'atica, Universidade Federal de Santa Catarina, 88040-900 Florian\'opolis, SC, Brazil}
	\email{d.bagio@ufsc.br}
	
	\author[Flora]{Saradia Della Flora}
	\address[S. Della Flora]{Departamento de Matem\'atica, Universidade Federal de Santa Maria, 97105-900 Santa Maria, RS, Brazil}
	\email{saradia.flora@ufsm.br}
	
	\author[Flores]{Daiana Flores}
	\address[D. Flores]{Departamento de Matem\'atica, Universidade Federal de Santa Maria, 97105-900 Santa Maria, RS, Brazil}
	\email{flores@ufsm.br}
	
	\author[M\'arquez]{Oscar M\'arquez}
	\address[O. M\'arquez]{Departamento de Matem\'atica, Universidade Federal de Santa Catarina, 88040-900 Florian\'opolis, SC, Brazil}
	\email{oscar.marquez.sosa@ufsc.br}
	
	\author[Pollastri]{H\'ector Mart\'\i n Pe\~na Pollastri}
	\address[H.~M. Pe\~na Pollastri]{Department of Mathematics, Indiana University, Bloomington, IN 47405, USA}
	\email{hpenapol@iu.edu}
	
	\thanks{\noindent 	2020 \emph{Mathematics Subject Classification.}
		Primary 16T05; Secondary 17B50, 17B37.\\
		The work of N. Andruskiewitsch was partially supported by 
		CONICET (PIP 11220200102916CO), FONCyT-ANPCyT (PICT-2019-03660), and
		the Secyt (UNC) (Proyecto Consolidar 33620230100517CB). The work of D. Bagio was partially supported by Funda\c c\~ao de Amparo a Pesquisa e Inova\c c\~ao do Estado de Santa Catarina (FAPESC), Edital 21/2024. The work of H. M. P. Pollastri was supported by the Simons Foundation Award 889000 as part of the Simons Collaboration on Global Categorical Symmetries and by the National Science Foundation under Grant No. 2529637.}

\begin{abstract} 
In this work, we study restricted Laistrygonian algebras over a field of positive characteristic $p$. We compute their Drinfeld double $\mathtt{D_p(q,a)}$ and give explicit presentations by generators and relations. We prove that $\mathtt{D_p(q,a)}$ is a pointed Hopf algebra. For $\mathtt{D_p(1,a)}$, we describe an abelian exact sequence of Hopf algebras and identify one of its quotients as a restricted enveloping Lie algebra. This exact sequence enables us to classify the finite-dimensional simple $\mathtt{D_p(1,a)}$-modules. We also introduce an infinite-dimensional Hopf algebra covering the restricted Drinfeld double and study its relation with the corresponding finite-dimensional case. Using this covering together with a Frobenius map, we construct a $3\times 3$ commutative diagram whose rows and columns are exact sequences of Hopf algebras. In particular, our approach provides a unified treatment of the Drinfeld doubles associated with restricted Laistrygonian algebras in arbitrary positive characteristic.
\end{abstract}
	
	\maketitle

	\setcounter{tocdepth}{1}
\tableofcontents


\section*{Introduction}
The Drinfeld double is a fundamental construction in the theory of Hopf algebras. Introduced by Drinfeld in the 1980s, it provides a systematic method for constructing quasitriangular Hopf algebras and, consequently, solutions of the quantum Yang--Baxter equation. Drinfeld doubles arise naturally in the study of quantum groups, braided tensor categories and the representation theory of Hopf algebras. In particular, the Drinfeld doubles of bosonizations of Nichols algebras provide a natural framework for constructing and studying Hopf algebras that share many structural features with quantum groups.

Nichols algebras play a central role in the classification of pointed Hopf algebras. Over fields of characteristic zero, several families of Nichols algebras of finite Gelfand--Kirillov dimension over abelian groups were obtained in the classification program developed in \cite{AAH-triang}. Among the families appearing in this classification are the Nichols algebras associated with braided vector spaces called blocks. Given $\epsilon \in \Bbbk^\times$ and $\ell \geq 2$, let $\mathcal{V}(\epsilon,\ell)$ denote the braided vector space with basis $\{x_i:1\leq i\leq \ell\}$ and braiding
\[
\mathfrak{c}_{\epsilon}(x_i \otimes x_j)=
\begin{cases}
	\epsilon x_1\otimes x_i, & j=1,\\
	(\epsilon x_j+x_{j-1})\otimes x_i, & 2\leq j\leq \ell,
\end{cases}
\qquad 1\leq i\leq \ell.
\]
Over a field of characteristic zero, the Nichols algebra of a block
$\mathcal V(\epsilon,\ell)$ has finite Gelfand--Kirillov dimension if and only if
$\ell=2$ and $\epsilon=\pm1$; see \cite{AAH-triang}. Precisely, the Nichols algebra $\toba(\mathcal V(1,2))$ is the Jordan plane and $\toba(\mathcal V(-1,2))$ is the super Jordan plane.

Over fields of odd positive characteristic $p$, the Nichols algebras of the blocks $\mathcal V(1,2)$ and $\mathcal V(-1,2)$ were determined in \cite{clw} and \cite{AAH-findim}, respectively. These algebras are known as the \emph{restricted Jordan plane} and the \emph{restricted super Jordan plane}, respectively. The restricted Jordan plane (resp. super Jordan plane) is a Hopf algebra in the category of Yetter--Drinfeld modules over the cyclic group $C_p$ (resp. $C_{2p}$). The Drinfeld double $D(\toba(\mathcal V(1,2))\#\Bbbk C_p)$ was determined in \cite{APenha}, whereas the Drinfeld double $D(\toba(\mathcal V(-1,2))\#\Bbbk C_{2p})$ was calculated in \cite{APenha2}. In both papers, it is shown that the corresponding Drinfeld double fits into an abelian exact sequence of Hopf algebras. This exact sequence is used to compute the finite-dimensional simple modules over the corresponding Drinfeld double.

Over a field of characteristic $2$, the Nichols algebra of the block $\mathcal V(1,2)$ was determined in \cite{clw}. This algebra is a braided Hopf algebra in the category of Yetter--Drinfeld modules over the cyclic group $C_2$. The corresponding Drinfeld double $D(\toba(\mathcal V(1,2))\#\Bbbk C_2)$ was determined and studied in \cite{A-Bagio-FF3}. As in the odd characteristic case, the Drinfeld double fits into an abelian exact sequence of Hopf algebras, and this exact sequence is used to determine its finite-dimensional simple modules.

The preceding examples correspond to Nichols algebras associated with two-dimensional blocks. A natural next step is to consider braided vector spaces obtained by adjoining a point to a Jordan block. This construction gives rise to the family of braided vector spaces introduced in \cite{AAH-triang}. Let $a,q\in\Bbbk$, $q\neq 0$. The braided vector space $\Lstr_p(q,a)$ is the vector space with basis $\{x_i:i\in \I_3\}$ and braiding $c$ given by
\begin{align*}
	(c(x_i \otimes x_j))_{i,j\in \I_3} &=
	\begin{pmatrix}
		x_{1} \otimes x_{1}& (x_2 + x_{1}) \otimes x_{1}& q x_3 \otimes x_{1}
		\\
		x_{1} \otimes x_2 & (x_2 + x_{1}) \otimes x_2& q x_3 \otimes x_2
		\\
		q^{-1} x_{1} \otimes x_3 & q^{-1}(x_2 + a x_{1}) \otimes x_3& x_3 \otimes x_3
	\end{pmatrix}.
\end{align*}
It was proved in \cite{AAH-triang} that the Nichols algebra $\mathfrak{B}(\Lstr_p(q,a))$, called the \emph{Laistrygonian algebra}, has finite Gelfand--Kirillov dimension over fields of characteristic zero. The Laistrygonian algebras were further studied in \cite{ABFF}, where, among other results, their finite-dimensional simple modules and point modules were classified. More recently, T. Lu \cite{Tao} continued the study of their ring-theoretic properties, investigating their centers, skewfields of fractions, algebra isomorphisms and prime spectra.

Over fields of positive characteristic, finite-dimensional restricted analogues of the Laistrygonian algebra were constructed in \cite{AAH-findim} for $p>2$ and in \cite{A-Bagio-FF2} for $p=2$. In the present work, we determine the Drinfeld doubles of the bosonizations of these restricted Laistrygonian algebras and study their structure and finite-dimensional representations. In contrast to previous studies of the Drinfeld doubles of other restricted Nichols algebras, where the cases $p=2$ and $p>2$ were treated separately, as in \cite{A-Bagio-FF3} and \cite{APenha}, respectively, our approach allows us to treat all positive characteristics simultaneously.

One remarkable feature of the algebra obtained as the Drinfeld double of the bosonization of a restricted Laistrygonian algebra, in contrast with previous work on the Jordan and super Jordan planes, is that the classical Lie algebra arising as part of the abelian extension is not a contragredient Lie algebra. These examples thus provide a possible bridge towards the construction of quantum groups over certain non-semisimple Lie algebras.

The paper is organized as follows. In Section~\ref{sec:preliminaries}, we recall the necessary background on Drinfeld doubles, Nichols algebras, and bosonizations. In Section~\ref{sec:laistrygonian}, we review the Laistrygonian algebra, its restricted versions, and the corresponding Hopf algebras. In Section~\ref{sec:double-restricted}, we determine the Drinfeld doubles of the restricted Laistrygonian algebras, provide presentations by generators and relations, and prove that they are pointed Hopf algebras. In Section~\ref{sec:double-infinite}, we introduce an infinite-dimensional Hopf algebra covering the restricted Drinfeld double. In Section~\ref{sec:extensions}, we describe exact sequences of Hopf algebras associated with $\mathtt{D_p(1,a)}$, identify a restricted enveloping algebra quotient, and construct a commutative $3\times3$ diagram whose rows and columns are exact. In Section~\ref{section:simplemodules}, we classify the finite-dimensional simple $\mathtt{D_p(1,a)}$-modules. Finally, Appendix~\ref{sec:formulas} collects the commutation and comultiplication formulas used throughout the paper.

\section{Preliminaries}\label{sec:preliminaries}
\subsection*{Notations and Conventions}
Throughout, $\ku$ is an algebraically closed field of characteristic $p$, where either $p=0$ or $p$ is a prime number. All
vector spaces, algebras and tensor products are over $\ku$.
Given $\ell < m \in\N_0$,  we set $\I_{\ell, m}=\{\ell, \ell +1,\dots,m\}$, $\I_m = \I_{1, m}$.
Let $\cV$ be a finite-dimensional vector space. We identify $\cV^*\ot \cV^*\simeq (\cV\ot \cV)^*$ via
\begin{align}\label{eq:dual-tensor-product}
	\langle f\ot f', x\ot y \rangle &= \langle f,x\rangle \langle f',y \rangle, & f,f'\in \cV^*, x,y\in \cV.
\end{align}
Given a subset $X$ of a vector space $V$, we denote its linear span by $\Bbbk\{X\}$. We will denote by  $A^\circ=\{f\in A^* \,:\, \ker f \text{ contains an ideal of finite codimension}\}$ the Sweedler dual of an infinite-dimensional algebra $A$.

The cyclic group of order $N$  is denoted by $C_N$ and written multiplicatively. 
The group of $N$--th roots of unity is denoted by  $\G_N$; $\G_N'\subseteq \G_N$ is the subset of primitive \(N\)-th roots of unity.
The algebra of regular functions on an (affine) algebraic group $G$ is denoted $\cO(G)$.
As usual, $\Gb_a$ is the additive algebraic group $(\ku,+)$ and $\Gb_m$ is the multiplicative algebraic group $(\kut,\cdot)$.

Let $A$ be an algebra. If $A$ is generated by $(a_i)_{i \in \I_n}$ subject to the relations $(r_i)_{i \in \I_m}$, then we write  $A=\ku\langle a_1, \dots,a_n\,  \vert\, r_1, \dots,r_m \rangle$.
We denote by $\GK A$ the Gelfand-Kirillov dimension of $A$.
The set of isomorphism classes of finite-di\-men\-sional simple modules over $A$
is denoted $\irr A$. We use the same notation for a class in $\irr A$ and any of its representatives. 
An element $x\in A$ is normal if $Ax = xA$. 
If $B$ is a subalgebra of $A$,
then $\Ind_{B}^{A}$ denotes the induction functor $M \mapsto A \otimes_B M$.

Let $K$ be a Hopf algebra with comultiplication $\Delta$ (we use the Heyneman-Sweedler notation), 
counit $\varepsilon$ and  antipode
$\Ss$. The group of group-like elements is denoted $G(K)$; if $g,h\in G(K)$, then the
space of $(g,h)$ skew-primitive elements of $K$ is denoted by $\Pc_{g,h}(K)$, that is, $\Pc_{g,h}(K):=\{x\in K\,:\,\Delta(x)=  x\ot h + g\ot x\}$.
In particular, $\Pc(K) = \Pc_{1,1}(K)$ is the space of primitive elements. The category of Yetter-Drinfeld modules over $K$ is denoted by $\yd{K} $, see e.~g. \cite[\S 11.6]{Radford-libro}. We refer the reader to \cite{Montgomery-book} and \cite{Radford-libro} for the basic theory  of Hopf algebras needed in this paper.  As usual, \(\delta_{i,j}\) denotes the Kronecker delta. We denote by $\mathbb{G}_N$ the group of roots of unity of order $N$ and $\mathbb{G}_{\infty}=\bigcup_{N\geq 1}\mathbb{G}_N$.

\subsection*{Index of symbols}

For the reader's convenience, we collect below the notation for the main
objects considered throughout the paper. Symbols used only locally are
introduced in the corresponding sections. \medbreak

\begingroup
\renewcommand*{\descriptionlabel}[1]{%
	\hspace{\labelsep}\normalfont #1\hfil}
\begin{description}[leftmargin=!,labelwidth=3.5cm,font=\normalfont]

	\item[\(\Lstr_p(q,a)\)]
	The braided vector space with basis \(\{x_1,x_2,x_3\}\) and braiding
	given in~\eqref{braid-las}.
	
	\item[\(\cL_p(q,a)\)]
	The Laistrygonian algebra associated with \(\Lstr_p(q,a)\).
	
	\item[\(\lstr_p(q,a)\)]
	The restricted Laistrygonian algebra, namely the quotient of
	\(\cL_p(q,a)\) by the truncation ideal \(J_p\).

	\item[\(H_p(q,a)\)]
	The infinite-dimensional bosonization
	\(\cL_p(q,a)\#\ku\Z^2\).
	
	\item[\(\mathtt{H_p(q,a)}\)]
	The finite-dimensional bosonization
	\(\lstr_p(q,a)\#\ku\Gamma\).
	
	\item[\(\Lstr_p^*(q,a)\)]
	The braided dual of \(\Lstr_p(q,a)\).
	
	\item[\(\cL_p^*(q,a)\)]
	The dual Laistrygonian algebra.
	
	\item[\(\lstr_p^*(q,a)\)]
	The restricted dual Laistrygonian algebra.

	\item[\(\mathtt{D_p(q,a)}\)]
	The finite-dimensional Drinfeld double
	\(D\bigl(\mathtt{H_p(q,a)}\bigr)\).
	
	\item[\(K_p(q,a)\)]
	The bosonization \(\cL_p^*(q,a)\#\cA\), where
	\(\cA=U(\mathfrak t)\ot\ku\Z^2\).
	
	\item[\(\DLstr_p(q,a)\)]
	The infinite-dimensional Hopf algebra
	\(H_p(q,a)\bowtie_\tau K_p(q,a)^{\operatorname{op}}\), which covers
	\(\mathtt{D_p(q,a)}\).

	\item[\(\mathfrak{fsl}_2\), \(\mathfrak m\)]
	The fake \(\mathfrak{sl}_2\) and its minimal \(p\)-envelope,
	respectively.
	
	\item[\(\mathfrak a_\ell\)]
	The abelian Lie algebra with basis
	\(\{s_0,\ldots,s_\ell,r_0,\ldots,r_\ell\}\).
	
	\item[\(\mathfrak h_0\), \(\mathfrak h\)]
	The Lie algebras
	\(\mathfrak h_0=\mathfrak{fsl}_2\oplus\ku\{c\}\) and
	\(\mathfrak h=\mathfrak m\oplus\ku\{c\}\).
	
	\item[\(\mathfrak g_0\), \(\mathfrak g\)]
	The semidirect products
	\(\mathfrak g_0=\mathfrak a_\ell\rtimes\mathfrak h_0\) and
	\(\mathfrak g=\mathfrak a_\ell\rtimes\mathfrak h\).

\end{description}
\endgroup

\subsection{The Drinfeld double}\label{subsec:double}
Let $A$ be a finite-dimensional Hopf algebra. The Drinfeld double of $A$, denoted by $D(A)$, was introduced by Drinfeld as a systematic way to construct solutions of the quantum Yang-Baxter equation.
We recall the definition. As a coalgebra, \(D(A)=A\ot A^{*\operatorname{op}}\), with multiplication defined as follows. We write $a\bowtie r:= a \otimes r$ in $D(A)$. Then, 
\begin{align*}
	(a\bowtie r)(a'\bowtie r') &= \big\langle r_{(1)},a'_{(1)}\big\rangle \big\langle r_{(3)}, \Ss(a'_{(3)}) \big\rangle (aa'_{(2)}\bowtie r'r_{(2)}), & 
\end{align*}
$a, a'\in A$, $r, r'\in A^{* \operatorname{op}},$
where $r r' = m(r\ot r')$ is  in $A^*$, not in $A^{* \operatorname{op}}$.

When $A$ is not finite-dimensional, the definition of the double may be extended. 
Following  \cite{Doi-Takeuchi}, the double is defined
with respect to a Hopf algebra $B$ equipped with
a skew-pairing,  i.e., a linear map 
$\tau\colon A\otimes B\to \ku$ satisfying
\begin{align}\label{eq:skew-pairing}
	\begin{aligned}
		\tau(a\widetilde a\ot b) &= \tau(a \ot b_{(1)}) \tau(\widetilde a \ot b_{(2)}),&
		\tau(1 \ot b) &= \varepsilon(b), & a,\widetilde a &\in A,
		\\
		\tau(a \ot \widetilde bb) &= \tau(a_{(1)} \ot b) \tau(a_{(2)} \ot \widetilde b),&
		\tau(a \ot 1) &= \varepsilon(a), &b, \widetilde b &\in B.
	\end{aligned}
\end{align}
Let $\sigma\colon (A \ot B)\ot (A \ot B)\to  \ku$ be the
2-cocycle  associated to $\tau$, where $A \ot B$ has the structure of tensor product Hopf algebra, that is,
\begin{align*}
	\sigma(a\ot b, \widetilde{a}\ot \widetilde{b}) &= \varepsilon(a)\varepsilon(\widetilde{b}) \tau(\widetilde{a}\ot b), & a,\widetilde{a} &\in A, b,\widetilde{b}\in B.
\end{align*}
Then the double of $A$ (with respect to $B$ and $\tau$) is the
Hopf algebra $A\ot B$ twisted by $\sigma$, i.e., $A \bowtie_{\tau} B:= (A\ot B)_{\sigma}$.

Finally, we recall that when \(A\) is finite-dimensional, the category $\yd{A}$ is equi\-valent to the category of left $D(A)$-modules.

\subsection{Nichols algebras} A braided vector space is a pair $(\cV, c)$, where $\cV$ is a vector space and
$c \in GL(\cV^{\otimes 2})$ satisfies the braid equation 
\begin{align*}
	(c\ot \id)(\id\ot c)(c\ot \id) = (\id\ot c)(c\ot \id)(\id\ot c).
\end{align*}  
The braid group $\mathbb B_n$ acts on $\cV^{\otimes n}$, $n \geq 2$. Let $\Omega_n \in \ku \mathbb B_n$ be  the quantum symmetrizer. The Nichols algebra $\toba(\cV)$ is defined as 
$T(\cV)/\left(\oplus_{n \geq 2} \ker \Omega_n\right)$, see \cite{A-leyva} for details.
If $\cV$ is finite-dimensional, then the dual vector space $\cV^*$ is braided with the transpose braiding $c^*$ defined by
\begin{align}\label{eq:braiding-transpose}
	\langle c^*(f\ot f'), x\ot y \rangle &= \langle f\ot f', c(x\ot y)\rangle, & f,f'\in \cV^*, x,y\in \cV.
\end{align}
A realization of $(\cV, c)$ over a Hopf algebra $L$ is a Yetter-Drinfeld $L$-module  structure on $\cV$
such that $c$ coincides with the categorical braiding. Then, $\toba(\cV)$ is a Hopf algebra in $\yd{L}$ and the bosonization $\toba(\cV) \# L$ is a Hopf algebra.

\subsection{The dual and the double of a bosonization}\label{subsec:dual-doble-bosonization}
Let $L$ be a finite-dimensional Hopf algebra and $\cV\in\yd{L}$ such that $\dim\toba(\cV)<\infty$.
In this subsection, we explicitly describe the double of the bosonization
$A \coloneqq \toba(\cV)\# L$.
The braided tensor categories $\yd{L}$ and $\yd{L^*}$ are equivalent via the functor 
$F\colon \yd{L}\longrightarrow\yd{L^*}$
defined as follows. If $\cX\in \yd{L}$, then $F(\cX) = \cX$ with the Yetter-Drinfeld structure
\begin{align*}
	f\rightharpoonup v &= \langle f, \Ss(v_{(-1)}) \rangle v_{(0)}, & \rho(v) &= \sum_{i=1}^m \Ss^{-1}(f_i)\ot h_i\rightharpoonup v, & v\in \cX, f\in L^*,
\end{align*}
see \cite[Proposition 2.2.1]{A-Granha}. Here $(h_i)_{i \in \I_m}$ is a basis of $L$ 
and $(f_i)_{i \in \I_m}$ is its dual basis.

\begin{lemma}\label{lemma:functor-dual-hopf} \cite[Lemma 2.6]{Granha}
	Let $\cW\coloneqq F(\cV^*) \in\yd{L^*}$. The braided vector spaces  $\cW$  and $(\cV^*,c^*)$ are isomorphic and
	$\dim\toba(\cW) = \dim\toba(\cV) <\infty$. \qed
\end{lemma}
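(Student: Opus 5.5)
The plan is to compute the braiding of $\cW = F(\cV^*)$ directly from the Yetter--Drinfeld structure transported by $F$, and compare it with the transpose braiding $c^*$ of \eqref{eq:braiding-transpose}. Start from the Yetter--Drinfeld $L$-module structure on $\cV^*$. The natural choice is the dual object in $\yd{L}$: the action is $\langle h\cdot f, v\rangle = \langle f, \Ss(h)\cdot v\rangle$ and the coaction is the transpose of the coaction of $\cV$ composed with $\Ss^{-1}$. Then apply the formulas defining $F$: the $L^*$-action on $\cW$ is $g \rightharpoonup f = \langle g, \Ss(f_{(-1)})\rangle f_{(0)}$, and the coaction is $\rho(f)=\sum_i \Ss^{-1}(f_i)\ot h_i\rightharpoonup f$. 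The categorical braiding in $\yd{L^*}$ is $c_{\cW}(f\ot f') = f_{(-1)}\rightharpoonup f' \ot f_{(0)}$. Substituting, this becomes
\[
c_{\cW}(f\ot f') = \sum_i \Ss^{-1}(f_i)\rightharpoonup (h_i\cdot f')\ot f .
\]
Unwinding the first action turns this into an expression involving only the $L$-structure of $\cV^*$, hence of $\cV$.

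Next, pair this expression with $x\ot y\in \cV\ot\cV$. Sum over the dual bases $(h_i),(f_i)$ using $\sum_i \langle g, h_i\rangle f_i = g$, and use that $\Ss$ is bijective since $L$ is finite-dimensional. This should give $\langle c_{\cW}(f\ot f'), x\ot y\rangle$ in the form $\langle f\ot f', \tau\, c^{\pm1}(x\ot y)\rangle$, possibly after composing with the flip or conjugating by an antipode twist. So $c_{\cW}$ is either $c^*$ on the nose, or conjugate to $c^*$ by an explicit linear automorphism $\phi$ of $\cV^*$ built from antipodes and the flip. In the second case, $\phi$ is the required isomorphism of braided vector spaces; that $(\phi\ot\phi)c_{\cW}=c^*(\phi\ot\phi)$ is a direct check. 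Since the statement only asks for an isomorphism, some freedom in conventions (left/right dual, $\Ss$ vs.\ $\Ss^{-1}$) is harmless.

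For the dimension claim, recall that the Nichols algebra depends only on the braided vector space, because it is defined through the quantum symmetrizers $\Omega_n$. Hence $\toba(\cW)\simeq\toba(\cV^*,c^*)$. Then I would invoke the standard fact that $\toba(\cV^*,c^*)$ is the graded dual of $\toba(\cV,c)$. The reason is that the transpose of $\Omega_n$ for $c$ is $\Omega_n$ for $c^*$ under \eqref{eq:dual-tensor-product}, so $\ker\Omega_n^{c^*} = (\operatorname{im}\Omega_n^{c})^\perp$. Consequently each homogeneous component of $\toba(\cV^*)$ has the same dimension as the corresponding component of $\toba(\cV)$, and the total dimensions agree and are finite.

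The main obstacle is the bookkeeping in the first step. One has to fix the correct dual Yetter--Drinfeld structure on $\cV^*$ and track antipodes through $F$ so that the pairing computation really closes up to $c^*$ rather than $c^{*-1}$ or a flipped version. I would first test the computation on a group algebra $L=\ku\Gamma$, where everything is diagonal, to pin down the conventions before doing the general case.
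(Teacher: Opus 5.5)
The paper gives no argument for this lemma; it only quotes \cite[Lemma 2.6]{Granha}. Your plan is the natural self-contained verification. The dimension half is correct: transposing each $c_i$ and reversing reduced words shows that $(\Omega_n^{c})^{t}=\Omega_n^{c^*}$ under \eqref{eq:dual-tensor-product}. Hence $\ker\Omega_n^{c^*}=(\operatorname{im}\Omega_n^{c})^{\perp}$, and the graded components of $\toba(\cV^*,c^*)$ and $\toba(\cV)$ have equal dimensions.

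The first half, however, rests on a displayed formula that is wrong. Write the coaction of $\cW$ as $f_{[-1]}\ot f_{[0]}=\sum_i\Ss^{-1}(f_i)\ot h_i\cdot f$. The $h_i$ act on $f$, not on $f'$:
\[
c_{\cW}(f\ot f')=f_{[-1]}\rightharpoonup f'\ot f_{[0]}=\sum_i \Ss^{-1}(f_i)\rightharpoonup f'\ot h_i\cdot f .
\]
What you wrote, $\sum_i\Ss^{-1}(f_i)\rightharpoonup(h_i\cdot f')\ot f$, equals $(\theta\ot\id)\circ\tau$ for one fixed operator $\theta$ on $\cV^*$. No pairing manipulation turns that into $c^*$.

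With the correct formula the computation closes at once.

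\emph{Collapsing the sum.} The antipode of $L^*$ is the transpose of that of $L$, so $\langle\Ss^{-1}(f_i),\Ss(y)\rangle=\langle f_i,y\rangle$. The dual-basis sum then collapses to $c_{\cW}(f\ot f')=f'_{(0)}\ot f'_{(-1)}\cdot f$. Here $(\cdot)_{(-1)}\ot(\cdot)_{(0)}$ is the $L$-coaction on $\cV^*$. Thus $c_{\cW}=\tau\,c_{\cV^*}\,\tau$, where $\tau$ is the flip and $c_{\cV^*}$ is the braiding of the dual object in $\yd{L}$.

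\emph{Identifying $c_{\cV^*}$.} Take the dual you describe: action through $\Ss$, coaction given by $f_{(-1)}\langle f_{(0)},v\rangle=\Ss^{-1}(v_{(-1)})\langle f,v_{(0)}\rangle$. Then
\[
\langle c_{\cV^*}(f\ot g),x\ot y\rangle=\langle g,\Ss(f_{(-1)})x\rangle\langle f_{(0)},y\rangle=\langle f,y_{(0)}\rangle\langle g,y_{(-1)}x\rangle ,
\]
so $c_{\cV^*}=(\tau c\tau)^*$.

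\emph{Conclusion.} Since $\tau^*=\tau$ for \eqref{eq:dual-tensor-product}, $c_{\cW}=c^*$ on the nose. The identity of $\cV^*$ is the isomorphism, and no twist $\phi$ is needed. The right dual gives the same braiding, so that choice is indeed immaterial.

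What is not immaterial is the range of outcomes you leave open. If you had landed on $\tau c^*\tau$ or $(c^*)^{-1}$, the first assertion would fail in general. For diagonal braidings, the matrices $(q_{ij})$ and $(q_{ji})$ typically give non-isomorphic braided vector spaces, and composing with the flip is not a conjugation by $\phi\ot\phi$. Only the dimension statement is insensitive to such discrepancies. So the isomorphism claim genuinely requires pinning the computation down to exactly $c^*$, which the corrected formula does.
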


\begin{lemma}\label{lemma:dual-of-a-bosonization} \cite[Proposition 1.5]{APenha2}
	$A^* \simeq \toba(\cW)\# L^*$. \qed
\end{lemma}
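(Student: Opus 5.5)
We want to show that $A^*\simeq \toba(\cW)\# L^*$, where $A=\toba(\cV)\# L$ and $\cW=F(\cV^*)$. Since $\dim\toba(\cV)<\infty$ and $L$ is finite-dimensional, $A$ is finite-dimensional and $A^*$ is a Hopf algebra. The strategy is to exhibit $A^*$ as a bosonization via the Radford--Majid criterion. We need a Hopf algebra projection $A^*\to L^*$ with a section $L^*\to A^*$. We then identify the coinvariants with $\toba(\cW)$ as a braided Hopf algebra in $\yd{L^*}$.

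\textbf{Step 1: the projection and the section.} The bosonization $A$ comes with the Hopf maps $\iota\colon L\to A$ and $\pi\colon A\to L$, where $\pi$ kills $\toba^{n}(\cV)$ for $n\geq 1$ and $\pi\iota=\id$. Dualizing gives Hopf maps $\pi^*\colon L^*\to A^*$ and $\iota^*\colon A^*\to L^*$ with $\iota^*\pi^*=\id$. By Radford's theorem, $A^*\simeq R\# L^*$, where
\[
R=(A^*)^{\operatorname{co} L^*}=\{f\in A^*: (\id\ot\iota^*)\Delta(f)=f\ot 1\},
\]
which is a Hopf algebra in $\yd{L^*}$. Unwinding the definitions, $R$ consists of the functionals $f$ with $f(xh)=f(x)\varepsilon(h)$ for $x\in\toba(\cV)$ and $h\in L$. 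Hence $R\simeq \toba(\cV)^*$ as a vector space, and it is graded by the dual grading: $R=\oplus_n \toba^n(\cV)^*$.

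\textbf{Step 2: identify $R$ as a Nichols algebra.} The algebra $R$ is graded and connected, and $R^1=\cV^*$. Its degree-one part consists of primitive elements, because the coalgebra structure of $R$ is dual to the multiplication of $\toba(\cV)$ restricted to degree one. We use the characterization of Nichols algebras: a graded connected braided Hopf algebra with $R^1$ equal to its primitives and generated in degree one is the Nichols algebra of $R^1$. Generation in degree one is dual to $\Pc(\toba(\cV))=\cV$, and primitives lying in degree one is dual to $\toba(\cV)$ being generated by $\cV$. So $R\simeq\toba(R^1)$.

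\textbf{Step 3: identify the Yetter--Drinfeld structure.} It remains to check that the $\yd{L^*}$-structure induced on $R^1=\cV^*$ is that of $F(\cV^*)$. The action of $L^*$ on $R$ is the adjoint action via $\pi^*$. On $\cV^*$, computed by pairing with $\cV\subset A$, it becomes $f\rightharpoonup \xi=\langle f,\Ss(\cdot)\rangle$-twisted transposition of the $L$-coaction. This matches the formula for $F$. The coaction similarly comes from the $L$-action on $\cV$ through the dual bases $(h_i),(f_i)$.

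The main obstacle is Step 3: keeping the antipodes, the $\Ss^{-1}$, and the op conventions consistent. I would verify this by evaluating both sides on $x\in\cV$ and $h\in L$.

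Lemma~\ref{lemma:functor-dual-hopf} then confirms that the braiding agrees with $c^*$ and that the dimensions match: $\dim A^*=\dim\toba(\cW)\dim L^*$. This gives a consistency check on the isomorphism.
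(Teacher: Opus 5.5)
The paper gives no argument of its own here: it just quotes \cite[Proposition 1.5]{APenha2}, with Lemma~\ref{lemma:functor-dual-hopf} supplying the identification of braided vector spaces. Your proposal is a correct self-contained proof, so it is a genuinely different route from the paper's citation. You apply Radford's projection theorem to $\iota^*\colon A^*\to L^*$ with section $\pi^*$, and then characterize Nichols algebras via the duality between ``generated in degree one'' and ``primitives concentrated in degree one''.

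The two points you assert without checking both hold.

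In Step 2, the braided coproduct of $R$ is $r\mapsto r_{(1)}\,\pi^*\iota^*\Ss(r_{(2)})\ot r_{(3)}$, not the restriction of $\Delta_{A^*}$. However, $(\id\ot\pi)\Delta_A(x)=x\ot 1$ for $x\in\toba(\cV)$, so the correction term vanishes after pairing and $\langle\Delta_R(r),x\ot y\rangle=\langle r,xy\rangle$. Likewise $\langle rs,x\rangle=\langle r,x^{(1)}\rangle\langle s,x^{(2)}\rangle$, where $x^{(1)}\ot x^{(2)}$ is the braided coproduct, because $r(xh)=r(x)\varepsilon(h)$ for $r\in R$. Hence $R$ and $\toba(\cV)$ really are in graded Hopf duality, which is exactly what your primitives/generation argument needs.

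In Step 3, for $r\in R^1$, $v\in\cV$ and $f\in L^*$, one finds $\delta(r)=\sum_i f_i\ot r(h_i\cdot -)$ and $\langle f\cdot r,v\rangle=\langle f,v_{(-1)}\rangle\langle r,v_{(0)}\rangle$. These are exactly the structure maps of $F(\cV^*)$ when $\cV^*$ is the left dual in $\yd{L}$, namely $\langle h\cdot\phi,v\rangle=\langle\phi,\Ss(h)v\rangle$ and $\phi_{(-1)}\langle\phi_{(0)},v\rangle=\Ss^{-1}(v_{(-1)})\langle\phi,v_{(0)}\rangle$. The comparison uses the identity $\sum_i\Ss^{-1}(f_i)\ot\Ss(h_i)=\sum_i f_i\ot h_i$. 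In particular the braiding on $R^1$ comes out as $c^*$ directly, so for you Lemma~\ref{lemma:functor-dual-hopf} is only a consistency check.

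Compared with the bare citation, your argument is longer. In exchange it makes explicit that the coinvariants of $A^*$ are the graded dual of $\toba(\cV)$ under the pairing extending $\langle\cV^*,\cV\rangle$. This is what Proposition~\ref{prop:the-double-in-general} uses implicitly, e.g.\ in the term $\langle w_i,v_j\rangle$.
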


\medbreak
Given bases $(v_i)_{i \in \I_{n}}$ of $\cV$ and $(w_i)_{i \in \I_{n}}$  of $\cW$,
and generators $(s_j)_{j \in \I_{m_0}}$ of $L$ and $(s'_k)_{k\in \I_{m_1}}$  of $(L^*)^{\operatorname{op}}$,
we fix  presentations for $\toba(\cV)$, $\toba(\cW)^{\operatorname{op}}$ and $D(L)$:
\begin{align}\label{eq:general-presentation-by-generators-and-relations-for-Nichols}
	\begin{aligned}
		&	\toba(\cV) = \ku \langle v_1,\dots,v_n \,\vert\, r_1,\dots,r_{n_1}\rangle, \\
		&	\toba(\cW)^{\operatorname{op}} = \ku \langle w_1,\dots,w_n\, \vert\, r'_1,\dots,r'_{n_2}\rangle, 
		\\
		&	D(L) =\ku \langle s_1,\dots,s_{m_0}, s'_1,\dots, s'_{m_1}\, \vert\, \widetilde{r}_1,\dots,\widetilde{r}_{n_3}\rangle.
	\end{aligned}
\end{align}  
Consequently, we obtain the following presentation for $D(A)$.

\begin{prop}\label{prop:the-double-in-general}  \cite[Proposition 1.7]{APenha2}
	The algebra $D(A)$ is presented by generators $(v_i)_{i \in \I_{n}}$, $(w_i)_{i \in \I_{n}}$, 
	$(s_j)_{j \in \I_{m_0}}$  and $(s'_k)_{k\in \I_{m_1}}$ with relations \eqref{eq:general-presentation-by-generators-and-relations-for-Nichols} and
	\begin{align}\label{eq:relations-doble-general}
		\begin{aligned}
			&	s'_i v_j = \langle (s'_i)_{(1)}, (v_j)_{(-1)} \rangle (v_j)_{(0)} (s'_i)_{(2)}, \quad (i,j)\in\I_{m_1}\times \I_n, \\
			&	w_i s_j = \sum_{\ell=1}^n \langle w_i, (s_j)_{(1)}\rightharpoonup v_\ell\rangle (s_j)_{(2)} w_\ell, \quad (i,j)\in\I_{n}\times \I_{m_0}, \\
			&	w_i v_j = \langle w_i, v_j\rangle 1 + \sum_{\ell=1}^n \langle w_i, (v_j)_{(-1)}\rightharpoonup v_\ell\rangle (v_j)_{(0)} w_\ell
		\end{aligned}\\ \notag
		\begin{aligned}
			+\sum_{k=1}^m \sum_{\ell,t=1}^n &\langle w_i, (v_j)_{(-2)}\rightharpoonup v_\ell\rangle \langle w_\ell, h_k\rightharpoonup v_t \rangle
			\langle w_t\#\varepsilon, \Ss((v_j)_{(0)}) \rangle (v_j)_{(-1)} f_k,
		\end{aligned}
	\end{align}
	$(i,j)\in\I_n\times \I_n$. 
	Hence $D(A)$ admits a triangular decomposition 
	\begin{align*}
		\hspace{88pt} D(A) \simeq \toba(\cV)\ot D(L) \ot \toba(\cW)^{\operatorname{op}}.  \hspace{88pt}\qed
	\end{align*}
\end{prop}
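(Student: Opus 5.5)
The proof should follow from the multiplication rule of $D(A)$ recalled in \S\ref{subsec:double}, once we identify $A^{*}\simeq \toba(\cW)\# L^*$ by Lemma~\ref{lemma:dual-of-a-bosonization}. I will work in $D(A)=A\bowtie A^{*\operatorname{op}}$.

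\emph{Generators and triangular decomposition.} Both $A=\toba(\cV)\#L$ and $A^{*\operatorname{op}}\simeq(\toba(\cW)\#L^*)^{\operatorname{op}}$ are factorizable as tensor products: $A\simeq\toba(\cV)\ot L$ and $A^{*\operatorname{op}}\simeq L^{*\operatorname{op}}\ot\toba(\cW)^{\operatorname{op}}$, the latter after reordering the factors. The subalgebras $L\subset A$ and $L^{*\operatorname{op}}\subset A^{*\operatorname{op}}$ generate a copy of $D(L)$ inside $D(A)$, because the pairing restricted to them is the pairing defining $D(L)$. Hence $D(A)$, which is $A\ot A^{*\operatorname{op}}$ as a vector space, equals $\toba(\cV)\ot D(L)\ot\toba(\cW)^{\operatorname{op}}$ via multiplication. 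It is therefore generated by the $v_i,w_i,s_j,s'_k$, and the relations \eqref{eq:general-presentation-by-generators-and-relations-for-Nichols} hold in it.

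\emph{Cross relations.} Next I apply the double product formula to each mixed product:
\begin{itemize}
\item $r a'$ with $r=s'_i$, $a'=v_j$;
\item $r a'$ with $r=w_i$, $a'=s_j$;
\item $r a'$ with $r=w_i$, $a'=v_j$.
\end{itemize}
For this I need the coproducts in the bosonizations, namely $\Delta(v)=v^{(1)}\#(v^{(2)})_{(-1)}\ot (v^{(2)})_{(0)}$, and for a primitive $v\in\cV$ this is $v\ot1+v_{(-1)}\ot v_{(0)}$. The same formula holds for $w$ in $\toba(\cW)\#L^*$, with coaction $\rho(w)=\sum\Ss^{-1}(f_k)\ot h_k\rightharpoonup w$ given by the functor $F$. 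I also need $\Delta^{(2)}$ of these elements, and the pairing between $\toba(\cV)\#L$ and $\toba(\cW)\#L^*$, which pairs $\cV$ with $\cW$ and $L$ with $L^*$ and vanishes on mixed degrees.

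Expanding gives the three families of relations. For $s'_i v_j$, only the $L$-part of $\Delta^{(2)}(v_j)$ pairs nontrivially with $s'_i$. This yields the first relation; it is the $L^*$-action on $\cV$ transported through $F$. For $w_i s_j$, only the term $w_i\ot 1$ of the coproduct pairs with grouplike-type parts, and the third tensor factor contributes the coaction. This yields the second relation. For $w_iv_j$, the three-fold coproducts each have three relevant terms, giving the constant term $\langle w_i,v_j\rangle$, the middle term, and the term lying in $L\ot L^*$. The expansion of $\langle r_{(3)},\Ss(a'_{(3)})\rangle$ uses $\Ss(v)=-\Ss(v_{(-1)})v_{(0)}$, and that is where the double sum over the dual basis $(h_k,f_k)$ comes from.

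\emph{Completeness.} Let $\widehat D$ be the algebra presented by the stated generators and relations. The cross relations let us rewrite any word in normal order $\toba(\cV)\cdot D(L)\cdot\toba(\cW)^{\operatorname{op}}$, so $\dim\widehat D\le\dim\toba(\cV)\dim D(L)\dim\toba(\cW)$. Since Lemma~\ref{lemma:functor-dual-hopf} gives $\dim\toba(\cW)=\dim\toba(\cV)$, this bound equals $\dim D(A)$. The natural map $\widehat D\to D(A)$ is surjective, so it is an isomorphism, and the triangular decomposition follows.

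I expect the main obstacle to be the bookkeeping in the $w_iv_j$ relation: tracking the braided coproduct, the antipode and the dual-basis expression of the coaction of $\cW$ without sign or order errors.
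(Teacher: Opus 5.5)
Your overall strategy is the standard one behind the cited result: identify $A^*\simeq\toba(\cW)\#L^*$, factor $D(A)=\toba(\cV)\cdot D(L)\cdot\toba(\cW)^{\operatorname{op}}$, get the mixed products from the multiplication rule of the double by degree considerations, and finish with a dimension count. Your derivation of the three families in \eqref{eq:relations-doble-general} is fine in substance. One small correction: for $w_is_j$ the surviving summand of $\Delta^{(2)}(w_i)$ is $(w_i)_{(-1)}\ot(w_i)_{(0)}\ot 1$, so the coaction sits in the \emph{first} factor.

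The gap is in the completeness step. To put a word in the order $\toba(\cV)\cdot D(L)\cdot\toba(\cW)^{\operatorname{op}}$, you must move each $v_j$ to the left past \emph{both} kinds of generators $s_i$ and $s'_i$ of $D(L)$. Likewise, each $w_i$ must move to the right past both $s_j$ and $s'_k$. Your relations only handle $s'_iv_j$, $w_is_j$ and $w_iv_j$; nothing rewrites $s_jv_i$ or $w_is'_k$. These missing rules are the bosonization relations of $A=\toba(\cV)\#L$, namely $s_jv_i=((s_j)_{(1)}\rightharpoonup v_i)(s_j)_{(2)}$, and of $A^{*\operatorname{op}}\simeq(\toba(\cW)\#L^*)^{\operatorname{op}}$, coming from the $L^*$-action on $\cW$. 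They hold in $D(A)$ because $A$ and $A^{*\operatorname{op}}$ are subalgebras. But they are not among \eqref{eq:general-presentation-by-generators-and-relations-for-Nichols} and \eqref{eq:relations-doble-general}, so your bound $\dim\widehat D\le\dim\toba(\cV)\dim D(L)\dim\toba(\cW)$ is unjustified.

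This is not merely bookkeeping: in general these relations do not follow from the listed ones. Take $p=2$, $L=\ku C_2=\ku\langle g\rangle$, and $\cV=\ku v$ with trivial action and coaction $g\ot v$. Then $\toba(\cV)=\ku[v]/(v^2)$, and $L^*$ is generated by the primitive element $\zeta$ with $\zeta^2=\zeta$ and $\langle\zeta,g\rangle=1$. With $\langle w,v\rangle=1$, the listed relations read $v^2=w^2=0$, $g^2=1$, $\zeta^2=\zeta$, $g\zeta=\zeta g$, $\zeta v=v\zeta+v$, $wg=gw$ and $wv=vw+1+g$. This is the $(z_0,w_0,\alpha_2,\zeta_2)$-part of Proposition~\ref{prop:double-both} at $q=1$. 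Expanding $wv^2=0$ does give $gv=vg$. But no relation involves $w$ and $\zeta$ together. A diamond-lemma check (deg-lex order $\zeta>w>g>v$, with the rule $gv\to vg$ added) shows that all ambiguities resolve, so the words $(\zeta w)^k$ are linearly independent. Hence the presented algebra is infinite-dimensional, whereas $\dim D(A)=16$.

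The fix is to add both families of bosonization relations to the presentation. This is what the paper does when it applies the result in Proposition~\ref{prop:double-both}, via \eqref{eq: relations H} and \eqref{eq relations H dual-op1}. With them, your rewriting argument and dimension count go through.
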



\section{The Laistrygonian algebra and its restricted versions}\label{sec:laistrygonian}

Let $a, q\in\ku^\times$ and $\Lstr_p(q,a)$ the braided vector space with basis $\{x_i: i\in \I_3\}$ and braiding $c$ given by
\begin{align}\label{braid-las}
	(c(x_i \otimes x_j))_{i,j\in \I_3} &=
	\begin{pmatrix}
		x_{1} \otimes x_{1}&  ( x_2 + x_{1}) \otimes x_{1}& q x_3  \otimes x_{1}
		\\
		x_{1} \otimes x_2 & ( x_2 + x_{1}) \otimes x_2& q x_3  \otimes x_2
		\\
		q^{-1} x_{1} \otimes x_3 &  q^{-1}(x_2 + a x_{1}) \otimes x_3&  x_3  \otimes x_3
	\end{pmatrix}.
\end{align}

In this paper we assume that:
\begin{itemize}[label=$\circ$]
	\item If  $p>0$, then $q\in\G_N'$ with $\gcd(p,N)=1$.  \vspace{.1cm}
	\item If $p  \neq 2$, then we take a normalization of $a$ in the following sense: given $\ghost\in \N$, we choose the scalar $a\in\ku^{\times}$ such that $-2a = \ghost$. If $p>2$ we also assume that $\ghost\in \I_{p-1}$.
\end{itemize}

\subsection{The Laistrygonian algebra}

The Laistrygonian algebra was originally defined  in \cite{AAH-triang} and \cite{AAH-findim} over fields of characteristic different from $2$. In order to provide a unified treatment, we extend the definition to the case $p=2$. Set
\begin{align}
	\ell =
	\begin{cases}
		\ghost, & \text{if } p \neq 2,\\
		3-\delta_{a,1}, & \text{if } p = 2.
	\end{cases}
\end{align}
Let  $T(\Lstr_p(q,a))$ denote the tensor algebra. Define
\begin{align*}
	\xb_{21}=\xb_2\xb_1-\xb_1\xb_2, &&
	z_0= x_3, &&
	z_{n+1} = \xb_2 z_n - q z_n \xb_2,
	& & n\in \mathbb{N}_0.
\end{align*}

\begin{definition}
	The \emph{Laistrygonian algebra} $\cL_p(q,a)$ is the algebra generated by $\xb_i, z_0$,  $i\in \I_2$, subject to the relations
	\begin{equation}\label{eq: relation-lestrygon-geral-1}
		\begin{aligned}
			\xb_1^2 =-2\xb_{21},&\quad &
			\xb_1 z_n = q z_n \xb_1, &\quad &
			z_{\ell+1}=0,&\quad &
			\xb_{21}\xb_2 = (\xb_2+\xb_1 )\xb_{21}. 
		\end{aligned}
	\end{equation}
\end{definition}

\begin{remark} If $p \neq 2$, then the presentation given in \eqref{eq: relation-lestrygon-geral-1} is equivalent to the original one in \cite{AAH-triang} and \cite{AAH-findim}, namely
	\begin{align}\label{eq: relation-lestrygon-geral-1Original}
		\xb_1 \xb_2&= \xb_2 \xb_1 + \tfrac{1}{2}\xb_1^2,&
		\xb_1 z_0&= q z_0 \xb_1,&
		z_{\ell+1}&=0.
	\end{align}
	Indeed, the  relation $\xb_{21}\xb_2 = (\xb_2+\xb_1 )\xb_{21}$ becomes redundant in this case, and $\xb_1 z_n= q z_n \xb_1$ follows from the case $n=0$ and the first relation of \eqref{eq: relation-lestrygon-geral-1Original}. On the other hand, if $p=2$, the relations in \eqref{eq: relation-lestrygon-geral-1} are equivalent to
	\begin{equation}\label{lestrygon-char2}
		\begin{aligned}
			&\xb_1^2 =0, &\quad 
			&\xb_1\xb_2^2 =\xb_2^2\xb_1 + \xb_1\xb_2\xb_1, & \quad 
			&\xb_1 z_n = q z_n \xb_1,& \quad 
			&z_{4-\delta_{a,1}}=0.&
		\end{aligned}
	\end{equation}
In fact, the second relation in \eqref{lestrygon-char2} follows from $\xb_{21}\xb_2 = (\xb_2+\xb_1 )\xb_{21}$. The others are immediate. 
\end{remark}

When \(p=0\), we recall the following result from \cite[Proposition 4.3.5]{AAH-triang}.

\begin{lemma} \label{lem-nichol-lest-ch0}  $\cL_{0}(q,a) \simeq \toba(\Lstr_0(q,a))$ and $\GK\cL_{0}(q,a) = \ghost +3$. \qed
\end{lemma}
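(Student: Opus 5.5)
The lemma is quoted from \cite[Proposition 4.3.5]{AAH-triang}, so the plan is to reconstruct that argument in two steps. First show that the defining ideal of $\cL_0(q,a)$ is exactly the ideal $\bigoplus_{n\ge2}\ker\Omega_n$. Then compute the Gelfand--Kirillov dimension from a PBW-type basis. In characteristic $0$ the presentation \eqref{eq: relation-lestrygon-geral-1} is equivalent to \eqref{eq: relation-lestrygon-geral-1Original}, so we work with the latter.

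\textbf{Step 1 (the relations hold in $\toba(\Lstr_0(q,a))$).} Realize $\Lstr_0(q,a)$ as a Yetter--Drinfeld module over $\ku\Z^2$:
\begin{itemize}
\item $x_1,x_2$ have degree $g_1$ and span a Jordan block for the action of $g_1$;
\item $x_3$ has degree $g_2$, and $g_2$ acts on $\langle x_1,x_2\rangle$ through $q^{-1}$ times the unipotent matrix with parameter $a$.
\end{itemize}
Each relation is then checked to be primitive in $T(\Lstr_0(q,a))$, hence zero in the Nichols algebra. One route is to apply the skew derivations $\partial_1,\partial_2,\partial_3$ and verify the result vanishes. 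Concretely:
\begin{itemize}
\item $x_1x_2-x_2x_1-\frac12 x_1^2$ is the Jordan plane relation, a standard computation.
\item For $x_1z_0-qz_0x_1$, the derivations vanish because $c^2(x_1\otimes x_3)=x_1\otimes x_3$.
\item For $z_{\ell+1}$, compute $\partial_i(z_n)$ inductively. One finds $\partial_3(z_n)=\mu_n x_2^n$ up to lower terms, with $\mu_n=\prod_{k=0}^{n-1}(-2a-k)$ up to a nonzero factor. The normalization $-2a=\ghost$ gives $\mu_{\ghost+1}=0$, while $\partial_1(z_n)=\partial_2(z_n)=0$. Hence $z_{\ghost+1}$ is primitive and vanishes.
\end{itemize}
This yields a surjection $\cL_0(q,a)\twoheadrightarrow\toba(\Lstr_0(q,a))$.

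\textbf{Step 2 (PBW basis of $\cL_0(q,a)$).} Using the relations, show that
\[
x_1^{a_1}\,x_2^{a_2}\,z_\ghost^{b_\ghost}\cdots z_1^{b_1}\,z_0^{b_0}
\]
spans $\cL_0(q,a)$. For this, establish the commutation rules between $x_1,x_2$ and the $z_n$, and show that the $z_n$ $q$-commute or commute among themselves up to lower terms. One then either applies the Diamond Lemma or exhibits a filtration whose associated graded algebra is a quantum affine space.

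\textbf{Step 3 (linear independence).} To see that these monomials are independent in $\toba(\Lstr_0(q,a))$, and hence that the surjection is injective, I will first try the following. Use the subalgebra generated by $x_1,x_2$, which is the Jordan plane, together with the decomposition $\toba=K\#\toba(\text{Jordan})$. Here $K$ is generated by the $z_n$ and is the Nichols algebra of the Yetter--Drinfeld module $\mathrm{ad}_c\,\toba(\text{Jordan})(z_0)$, which has basis $z_0,\dots,z_\ghost$ and in characteristic $0$ yields a quantum linear space in which the $z_n$ have infinite height. Nondegeneracy of the skew derivations $\partial_i$ on ordered monomials then gives independence.

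\textbf{Step 4 (Gelfand--Kirillov dimension).} With the PBW basis consisting of $\ghost+3$ generators of infinite height, a standard filtration argument gives $\GK=\ghost+3$.

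I expect the main obstacle to be the independence in Step 3, in particular controlling the braided commutators among the $z_n$ carefully.
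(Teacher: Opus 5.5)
\textbf{There is a genuine gap in Step 2.} Your route matches the argument behind \cite[Proposition 4.3.5]{AAH-triang}, which is all the paper invokes: check the relations with skew derivations, then use $\toba\simeq K\#\toba(\text{Jordan})$ with $K=\toba(K^1)$ and $K^1=\ku\{z_0,\dots,z_\ghost\}$. But Step 2 cannot be carried out, because the $q$-commutation of the $z_n$ does not follow from \eqref{eq: relation-lestrygon-geral-1Original} (equivalently, from \eqref{eq: relation-lestrygon-geral-1}).

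To see this, grade $T(\Lstr_0(q,a))$ by $\N_0^2$ with $\deg x_1=\deg x_2=(1,0)$ and $\deg x_3=(0,1)$. The defining relations are homogeneous of degrees $(2,0)$, $(1,1)$ and $(\ghost+1,1)$. So in degree $(1,2)$ the ideal is spanned only by $x_3(x_1x_3-qx_3x_1)$ and $(x_1x_3-qx_3x_1)x_3$. Thus $\dim\cL_0(q,a)_{(1,2)}=4$, and
\[
z_0z_1-q^{-1}z_1z_0=2x_3x_2x_3-qx_3^2x_2-q^{-1}x_2x_3^2\neq 0\quad\text{in }\cL_0(q,a).
\]
In $\toba(\Lstr_0(q,a))$, however, this element is $0$: $\partial_1$ and $\partial_2$ kill it, and $\partial_3$ sends it to $2aq^{-1}(x_1x_3-qx_3x_1)=0$. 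Correspondingly $\dim\toba(\Lstr_0(q,a))_{(1,2)}=3$, which is the number of your PBW monomials $x_1z_0^2$, $x_2z_0^2$, $z_1z_0$ in that degree. So the surjection of Step 1 is not injective, and no rewriting ``up to lower terms'' can produce your spanning set. For $q=1$ it is worse: by Lazard elimination, $\cL_0(1,a)/(x_1)\simeq U(\ku x_2\ltimes\mathfrak f)$, where $\mathfrak f$ is the free Lie algebra on $z_0,\dots,z_\ghost$. Hence $\GK\cL_0(1,a)=\infty$.

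\textbf{What is missing.} You need the relations
\[
z_tz_{t+1}=q^{-1}z_{t+1}z_t,\qquad 0\le t<\ghost,
\]
which are part of the presentation of $\toba(\mathfrak L(1,\ghost))$ in \cite{AAH-triang} (there $q_{21}=q^{-1}$). They must be added to $\cL_0(q,a)$ for the lemma to hold, and with them your plan works.

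They hold in $\toba$ by your own derivation calculus. First, $\partial_1$ and $\partial_2$ vanish on every $z_nz_m$. Second, $\partial_3(z_nz_m-q^{n-m}z_mz_n)=0$, because $\partial_3(z_k)\in\ku x_1^k$, $\alpha_2\cdot z_k=q^{-k}z_k$ and $x_1z_k=qz_kx_1$.

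Conversely, set $R_{n,m}:=z_nz_m-q^{n-m}z_mz_n$. Since $\alpha_1\cdot z_n=qz_n$, the braided derivation $\operatorname{ad}_cx_2$ gives $\operatorname{ad}_cx_2(R_{n,m})=R_{n+1,m}+qR_{n,m+1}$. So all $R_{n,m}=0$ follow from the consecutive ones by induction on $m-n$. This is exactly what Step 2 needs, and Steps 3 and 4 then go through.

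\textbf{A smaller correction in Step 1.} Use the convention $\partial_i(yy')=y\,\partial_i(y')+\partial_i(y)(g\cdot y')$, with $g=\alpha_1$ for $i=1,2$ and $g=\alpha_2$ for $i=3$. Then $\partial_3(z_{n+1})=x_2\partial_3(z_n)-\partial_3(z_n)(x_2+ax_1)$. Together with $x_1^nx_2=x_2x_1^n+\frac n2x_1^{n+1}$, this gives exactly
\[
\partial_3(z_n)=2^{-n}\prod_{k=0}^{n-1}(\ghost-k)\,x_1^n .
\]
This is a multiple of $x_1^n$, not $x_2^n$, and has no lower terms. That exactness is what makes the conclusion $\partial_3(z_{\ghost+1})=0$ legitimate.
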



\subsection{The restricted Laistrygonian algebra}
Over fields of positive charac\-te\-ris\-tic, the Laistrygonian algebra admits restricted versions defined in  \cite{AAH-findim} and  \cite{A-Bagio-FF2} for $p\neq 2$ and  $p=2$, respectively. As in the case of characteristic zero, this algebra is related to the braided vector space introduced at the beginning of this section. 
\begin{definition} Let $p > 0$, and	let $J_p$ the ideal of $\cL_p(q,a)$ generated by
	\begin{align}\label{eq:trunc-Lestrygon-char-p-arbitraria}
		\xb_1^p,\qquad	 \xb_{21}^p,\qquad   \xb_2^{p(1+\delta_{2,p})},\qquad  z_n^p,\qquad  n\in \I_{0, \ell}.  
	\end{align} The \emph{restricted Laistrygonian algebra} $\lstr_p(q,a)$ is the quotient of  the algebra $\cL_p(q,a)$ by the ideal $J_p$ 
\end{definition}

\begin{remark}
Since $x_1^p=0$, it follows that $x_{21}^p=0$ for $p\neq 2$. On the other hand, if $p=2$, then the presentation given by \eqref{lestrygon-char2} and \eqref{eq:trunc-Lestrygon-char-p-arbitraria} coincides with the original presentation in \cite{A-Bagio-FF2}, since the relation $x_{21}^2=0$ is equivalent to $x_1x_2x_1x_2=x_2x_1x_2x_1$.
\end{remark}

\begin{lemma} \label{lem:lest-rest-nichols}
	If $p>0$, then  $\lstr_p(q,a) \simeq \toba(\Lstr_p(q,a))$ is a quotient of  $\cL_p(q,a)$ and has dimension $p^{\ell + 3 + 2\delta_{2,p}}$.
\end{lemma}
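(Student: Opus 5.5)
The plan is to reduce the statement to the known computations of the Nichols algebras in \cite{AAH-findim} for $p>2$ and in \cite{A-Bagio-FF2} for $p=2$. Those papers present $\toba(\Lstr_p(q,a))$ by generators and relations and give PBW bases. The work is then to match their presentations with \eqref{eq: relation-lestrygon-geral-1} together with \eqref{eq:trunc-Lestrygon-char-p-arbitraria}.

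\textbf{Case $p>2$.} By the Remark after the definition of $\cL_p(q,a)$, the relations \eqref{eq: relation-lestrygon-geral-1} are equivalent to \eqref{eq: relation-lestrygon-geral-1Original}. Moreover, $\xb_{21}^p=0$ follows from $\xb_1^p=0$. Hence $\lstr_p(q,a)$ is $T(\Lstr_p(q,a))$ modulo
\[
\xb_1\xb_2-\xb_2\xb_1-\tfrac12\xb_1^2,\quad \xb_1z_0-qz_0\xb_1,\quad z_{\ell+1},\quad \xb_1^p,\quad \xb_2^p,\quad z_n^p \ (n\in\I_{0,\ell}).
\]
This is exactly the presentation of $\toba(\Lstr_p(q,a))$ in \cite{AAH-findim}, where the normalization $-2a=\ghost\in\I_{p-1}$ gives $\ell=\ghost$. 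That paper also provides the PBW basis
\[
\xb_1^{m_1}\xb_2^{m_2}z_\ell^{n_\ell}\cdots z_0^{n_0},\qquad 0\le m_i,n_j<p,
\]
so the dimension is $p^{\ell+3}$.

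\textbf{Case $p=2$.} By the Remark after the definition of $J_p$, the presentation given by \eqref{lestrygon-char2} together with $\xb_1^2=0$, $\xb_{21}^2=0$, $\xb_2^4=0$ and $z_n^2=0$ coincides with the presentation of \cite{A-Bagio-FF2}. There $\ell=3-\delta_{a,1}$, so $z_{4-\delta_{a,1}}=0$. The PBW basis in that paper uses the generators $\xb_1,\xb_{21},\xb_2,z_0,\dots,z_\ell$ with heights $2,2,4,2,\dots,2$. This gives dimension
\[
2\cdot 2\cdot 4\cdot 2^{\ell+1}=2^{\ell+5}=p^{\ell+3+2\delta_{2,p}}.
\]

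\textbf{Quotient claim.} The statement that $\lstr_p(q,a)$ is a quotient of $\cL_p(q,a)$ holds by definition, since $\lstr_p(q,a)=\cL_p(q,a)/J_p$. For the identification with the Nichols algebra, the relations defining $\cL_p(q,a)$ hold in $\toba(\Lstr_p(q,a))$ because they appear among the defining relations cited above. So there is a surjection $\cL_p(q,a)\twoheadrightarrow\toba(\Lstr_p(q,a))$ that factors through $J_p$. Since the two presentations agree, the induced map is an isomorphism. Alternatively, one can get injectivity by comparing the dimension bound from the PBW spanning set in $\lstr_p(q,a)$ with the known value of $\dim\toba(\Lstr_p(q,a))$.

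\textbf{Main obstacle.} The real difficulty is the $p=2$ bookkeeping. I must check that the unified relation $\xb_{21}\xb_2=(\xb_2+\xb_1)\xb_{21}$, together with $\xb_1z_n=qz_n\xb_1$ for all $n$, generates the same ideal as the relations of \cite{A-Bagio-FF2}. I also need the exact value of $\ell$ in terms of $a$ to match theirs. For this I would expand the commutators directly in characteristic $2$, using $\xb_1^2=0$.
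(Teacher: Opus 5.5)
Your proposal is correct and takes essentially the same approach as the paper, whose proof only cites \cite[Proposition 4.10]{AAH-findim} for $p>2$ and \cite[Propositions 3.6 and 3.7]{A-Bagio-FF2} for $p=2$. Your matching of presentations through the two Remarks, which already settle the $p=2$ bookkeeping you flag as the main obstacle, and your dimension counts $p^{\ell+3}$ and $2\cdot 2\cdot 4\cdot 2^{\ell+1}=2^{\ell+5}$, simply make that citation explicit.
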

\begin{proof}
	The case $p > 2$ was proven in \cite[Proposition 4.10]{AAH-findim}, while the case for $p=2$ was established in \cite[Propositions 3.6 and 3.7]{A-Bagio-FF2}.
\end{proof}

\subsection{The corresponding Hopf algebras}
Let  $\{\alpha_1,\alpha_2\}$ be the canonical basis of $\Z^2$. The braided vector space
$\Lstr_p(q,a)$ has a realization in $\yd{\ku\Z^2}$ with action  and coaction  given by
\begin{align}\label{eq:YDlestrygon-action-coaction}
	\begin{aligned}
		&	 \alpha_1\rightharpoonup \xb_i = (\xb_i + \delta_{i,2} \xb_1), & &	\rho(\xb_1) = \alpha_1 \ot \xb_1,\\
		& \alpha_2\rightharpoonup \xb_i = q^{-1}(\xb_i + a\,  \delta_{i,2}  \xb_1), & &\rho(\xb_2) = \alpha_1 \ot \xb_2,\\
		& \alpha_j\rightharpoonup z_0 = q^{\delta_{1,j}} z_0, & &\rho(z_0) = \alpha_2 \ot z_0.
	\end{aligned}
\end{align} 
Thus $\cL_p(q,a)$ is a Hopf algebra in $\yd{\ku \Z^2}$. For $p>0$, consider the group $\varGamma = C_{pN}\times C_{pN}= \langle \alpha_1 \rangle \times \langle \alpha_2 \rangle$.
The formulas 
\eqref{eq:YDlestrygon-action-coaction} also provide  a realization of $\Lstr_p(q,a)$   
in $\yd{\ku \varGamma}$ and consequently, $\lstr_p(q,a)$ is a  Hopf algebra in $\yd{\ku \varGamma}$. 
Let
\begin{align}
	\label{eq:def-H}	H_{p}(q,a)&\coloneqq \cL_{p}(q,a)\# \ku\Z^2, &   \mathtt{H_p(q,a)}&\coloneqq \lstr_p(q,a)\# \ku \varGamma.
\end{align}

Observe that  $\mathtt{H_p(q,a)}$ is finite-dimensional, while $H_{p}(q,a)$ is infinite-dimensional but with finite $\GK$ension  (see Proposition \ref{prop:Dtilde} below).

\begin{lemma}\label{lemma:presentacion-H}
	\begin{enumerate}[leftmargin=*,label=\rm{(\roman*)}]
		\item	The algebra $H_p(q,a)$ is generated by $\xb_i, z_0,  \alpha_i$, $i\in \I_2$, with relations \eqref{eq: relation-lestrygon-geral-1} and
		\begin{align}\label{eq: relations H}
			\begin{aligned}
				&\alpha_1 \xb_i = (\xb_i + \delta_{2,i}\, \xb_1) \alpha_1, & &\qquad 	 \alpha_2 \xb_i = q^{-1} (\xb_i + a\, \delta_{2,i}\,  \xb_1)\alpha_2,\\
				&\alpha_i z_0 = q^{\delta_{1,i}} z_0 \alpha_i& &\qquad \alpha_1 \alpha_2 = \alpha_2 \alpha_1.                       
			\end{aligned}
		\end{align}
		\item  The algebra $\mathtt{H_p(q,a)}$ is generated by $\xb_i, z_0,  \alpha_i$, $i \in \I_2$ with relations, \eqref{eq: relation-lestrygon-geral-1}, \eqref{eq:trunc-Lestrygon-char-p-arbitraria}, \eqref{eq: relations H} and 
		\begin{align}\label{eq: alphaNp} \alpha_i^{Np}= 1.\end{align}
	\end{enumerate}
	\noindent The comultiplication is determined by
	\begin{equation}
		\begin{aligned}\label{eq: Delta H}
			&x_i \in \Pc_{\alpha_1,1}(H_p(q,a)),&
			&z_0 \in \Pc_{\alpha_2,1}(H_p(q,a)), &\alpha_i \in G(H_p(q,a)),&
		\end{aligned}
	\end{equation}
	and analogously for $\mathtt{H_p(q,a)}$.\qed
\end{lemma}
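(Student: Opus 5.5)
The presentation follows from the general description of a bosonization: if $R$ is a Hopf algebra in $\yd{L}$ presented by generators and relations, then $R\# L$ is generated by the generators of $R$ together with those of $L$. The relations are those of $R$, those of $L$, and the cross relations $h v = (h_{(1)}\rightharpoonup v)\, h_{(2)}$ for generators $h$ of $L$ and $v$ of $R$. I will apply this with $L=\ku\Z^2$ (resp. $\ku\varGamma$) and $R=\cL_p(q,a)$ (resp. $\lstr_p(q,a)$). The relations of $R$ are \eqref{eq: relation-lestrygon-geral-1} by definition (resp. together with \eqref{eq:trunc-Lestrygon-char-p-arbitraria}). For the group algebras I use the standard presentations: $\ku\Z^2$ is generated by invertible commuting $\alpha_1,\alpha_2$, and $\ku\varGamma$ additionally requires $\alpha_i^{Np}=1$, which is \eqref{eq: alphaNp}.

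The key steps, in order, are as follows.

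\emph{Smash-product multiplication.} In $R\#\ku G$ we have $(1\#g)(v\#1)=(g\rightharpoonup v)\#g$ for $g$ group-like. Reading off \eqref{eq:YDlestrygon-action-coaction} on the generators $\xb_1,\xb_2,z_0$ gives exactly \eqref{eq: relations H}; for instance, $\alpha_2\xb_2=q^{-1}(\xb_2+a\xb_1)\alpha_2$.

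\emph{Sufficiency of the relations.} Let $\widetilde H$ be the algebra defined by the stated presentation. There is a surjection $\widetilde H\to R\#\ku G$. Using \eqref{eq: relations H}, every word can be rewritten as an element of $R$ times an element of $\ku G$, so $\widetilde H$ is spanned by the images of $R\ot\ku G$. Note that the action of the $\alpha_i$ is invertible, since $q\neq0$ and the unipotent part is invertible; hence moving group elements to the right is legitimate, and so is moving inverses. Thus the multiplication map $R\ot\ku G\to\widetilde H$ is surjective. Composing with $\widetilde H\to R\#\ku G\simeq R\ot\ku G$ gives the identity, so the map is bijective. One point needs checking here: the subalgebra of $\widetilde H$ generated by $\xb_i,z_0$ is a quotient of $R$, which holds because the relations of $R$ are imposed. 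In the restricted case I also need that $\lstr_p(q,a)$ is a Hopf algebra in $\yd{\ku\varGamma}$; this is stated just before, and it uses that $\alpha_i^{Np}$ acts trivially. Trivial action holds since $q^{Np}=1$, and $\alpha_1^{p}$ acts on $\xb_2$ as $\xb_2+p\xb_1=\xb_2$, and similarly for $\alpha_2$.

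\emph{Comultiplication.} In a bosonization $\Delta(v\#1)=v^{(1)}(v^{(2)})_{(-1)}\# (v^{(2)})_{(0)}$, and $\Delta(g)=g\ot g$. For primitive $v\in R$ with $\rho(v)=g\ot v$, this gives $\Delta(v)=v\ot1+g\ot v$. Combining this with the coaction in \eqref{eq:YDlestrygon-action-coaction} yields $\xb_i\in\Pc_{\alpha_1,1}$ and $z_0\in\Pc_{\alpha_2,1}$, which is \eqref{eq: Delta H}. Since $\Delta$ is an algebra map, it is determined on these generators.

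The main obstacle is essentially bookkeeping: making sure the smash-product convention matches the left-action formulas exactly, with the factors of $q^{-1}$ and $a$ in the right places.
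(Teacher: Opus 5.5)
Your proposal is correct, and it is the standard bosonization argument that the paper takes for granted: the lemma is stated there without proof. Including the inverses $\alpha_i^{-1}$ for $\ku\Z^2$ and checking that $\alpha_i^{Np}$ acts trivially are the right points to secure; the only blemish is notational, since the bosonization coproduct should read $\Delta(v\#1)=v^{(1)}\#(v^{(2)})_{(-1)}\otimes(v^{(2)})_{(0)}\#1$, which for primitive $v$ with $\rho(v)=g\otimes v$ gives $\Delta(v)=v\otimes 1+g\otimes v$ as you conclude.
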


\subsection{Finite generation of cohomology}
\label{subsec:fgc}

Recall that an augmented algebra \(A\) is said to have finitely
generated cohomology, or simply \emph{fgc}, if the graded algebra
\(\operatorname{Ext}_{A}^{\bullet}(\ku,\ku)\) is finitely generated
and, for every finitely generated \(A\)-module \(M\), the graded
\(\operatorname{Ext}_{A}^{\bullet}(\ku,\ku)\)-module
\(\operatorname{Ext}_{A}^{\bullet}(\ku,M)\) is finitely generated.

Assume that \(p>2\) and let \(q\in\mathbb{G}_{\infty}\). Set
\(N=\ord(q)\). The notation \(\Lstr_q(1,\ghost)\) used in
\cite[Section~5]{AN-fgc} corresponds to our braided vector space
\(\Lstr_p(q,a)\) under the identification
\[
x_1\longleftrightarrow x_1,\qquad
y_1\longleftrightarrow x_2,\qquad
x_2\longleftrightarrow x_3,\qquad
\ghost=-2a.
\]
Moreover, the realization considered therein can be taken over
\(\Gamma=C_{pN}\times C_{pN}\). Hence
\cite[Theorem~5.4]{AN-fgc} shows that the bosonization
\[
\mathtt{H_p(q,a)}
=\lstr_p(q,a)\#\ku\Gamma
\]
has fgc, whereas \cite[Corollary~5.5]{AN-fgc} shows that the
restricted Laistrygonian algebra \(\lstr_p(q,a)\) itself has fgc. In the particular case \(q=1\), one has \(N=1\) and
\(\Gamma=C_p\times C_p\). In this case,
\cite[Proposition~5.3]{AN-fgc} also shows that the Drinfeld double (see Proposition \ref{prop:double-both}) $\mathtt{D_p(1,a)}=D\bigl(\mathtt{H}_p(1,a)\bigr)$ has fgc.


\section{Drinfeld doubles of restricted Laistrygonian algebras} \label{sec:double-restricted}
Throughout this section, we consider $p>0$. Our goal is to compute the Drinfeld doubles of the Hopf algebras $\mathtt{H_p(q,a)}$.

\subsection{The double of the group algebra} As in the previous section, consider $\varGamma = C_{pN}\times C_{pN}$. Then, $\ku^{\varGamma} \simeq \ku^{\varGamma_1} \otimes \ku^{\varGamma_2}$,  where
$\varGamma_1 \simeq C_{p}\times C_{p}$ and $\varGamma_2 \simeq C_{N}\times C_{N}$. Let $\zeta_i \in \ku^\varGamma$, $i \in \I_2$,  the derivations defined by
\begin{align}\label{eq:pairing-G-HGdual-ders}
& \langle \zeta_i, \alpha_j\rangle = \delta_{ij}, \quad  i,j \in \I_2.
\end{align}
Note that $\langle \zeta_1, \alpha_1^r\alpha_2^s\rangle=r1_{\ku}$, $\langle \zeta_2, \alpha_1^r\alpha_2^s\rangle=s1_{\ku}$ and consequently $\zeta_i^p=\zeta_i$, $i\in \I_{2}$.
Thus, $ \ku^{\varGamma_1}\simeq \ku[\zeta_1,\zeta_2]/\langle \zeta_i^p-\zeta_i: i \in \I_2 \rangle $ is the  restricted enveloping algebra of the $2$-dimensional abelian
Lie algebra. Let $T_i \in \ku^\varGamma$, $i \in \I_2$, the characters defined by
\begin{align}\label{eq:pairing-G-HGdual-chars}
	T_1 (\alpha_1) &= q, & T_1 (\alpha_2) &=1= T_2 (\alpha_1), & T_2 (\alpha_2) &= q^{-1}.
\end{align}
Hence $ \ku^{\varGamma_2}$ is generated by $T_1$ and $T_2$. 
Summarizing we have the next result.
\begin{lemma}\label{lema:double-group}
The double $D(\ku \varGamma) \simeq \ku \varGamma \otimes \ku^\varGamma$ is the  commutative Hopf algebra generated by
	$\alpha_i$,  $\zeta_i$,  $T_i$, $i \in \I_2$, with  relations \eqref{eq: alphaNp} and
	\begin{align}\label{eq:trunc-zetai-Ti}
\zeta_i^p &=\zeta_i,&   T_i^{N} &= 1.
\end{align}
The comultiplication is given by $\alpha_i, T_i \in G(D(\ku \varGamma))$ and $\zeta_i \in \mathcal{P}(D(\ku \varGamma))$.   \qed
\end{lemma}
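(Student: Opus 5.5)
The plan has three stages: establish the structure of the double, then the structure of $\ku^{\varGamma}$, and finally the presentation and coalgebra structure.

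\textbf{The double is commutative and a tensor product.} Since $\ku\varGamma$ is commutative and cocommutative, the multiplication rule of \S\ref{subsec:double} simplifies. Take $a'=g\in\varGamma$, so $a'_{(1)}\otimes a'_{(2)}\otimes a'_{(3)}=g\otimes g\otimes g$. Then for $r\in\ku^{\varGamma}$ we get
\[
(1\bowtie r)(g\bowtie 1)=\langle r_{(1)},g\rangle\langle r_{(3)},g^{-1}\rangle\, g\bowtie r_{(2)} .
\]
The function $r_{(1)}(g)\,r_{(2)}\,r_{(3)}(g^{-1})$ is the function $h\mapsto r(ghg^{-1})=r(h)$, because $\varGamma$ is abelian. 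Hence $D(\ku\varGamma)\simeq\ku\varGamma\otimes\ku^{\varGamma}$ as Hopf algebras, with both tensor factors commutative (note $\ku^{\varGamma\,\operatorname{op}}=\ku^{\varGamma}$). It therefore suffices to present $\ku^{\varGamma}$.

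\textbf{Structure of $\ku^{\varGamma}$.} Since $\gcd(p,N)=1$, we have $\varGamma\simeq\varGamma_1\times\varGamma_2$ with $\varGamma_1=C_p\times C_p$ and $\varGamma_2=C_N\times C_N$. This gives $\ku^{\varGamma}\simeq\ku^{\varGamma_1}\otimes\ku^{\varGamma_2}$ as Hopf algebras. We first check that $\zeta_i$ and $T_i$ are well defined on $\varGamma$.
\begin{itemize}
\item $\zeta_1(\alpha_1^r\alpha_2^s)=r\cdot 1_{\ku}$ is well defined modulo $pN$, since $p\mid pN$. It is additive, so $\zeta_1$ is primitive. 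It factors through the projection onto $\varGamma_1$.
\item $T_1$ and $T_2$ are characters, since $q^{pN}=1$. They factor through $\varGamma_2$, because $q^{N}=1$ makes them trivial on the $p$-part.
\end{itemize}
The group-like elements of $\ku^{\varGamma_2}$ are the characters $\widehat{\varGamma_2}\simeq C_N\times C_N$, which is generated by $T_1,T_2$ since $q$ is a primitive $N$-th root of unity. As $N$ is invertible in $\ku$, $\ku^{\varGamma_2}\simeq\ku\widehat{\varGamma_2}=\ku[T_1,T_2]/(T_i^N-1)$.

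For $\ku^{\varGamma_1}$, the identity $\zeta_i^p=\zeta_i$ holds pointwise because $r^p=r$ in $\mathbb F_p$. This yields a surjection-candidate
\[
\ku[\zeta_1,\zeta_2]/(\zeta_i^p-\zeta_i)\to\ku^{\varGamma_1}.
\]
Both sides have dimension $p^2$. The map is surjective because the functions $(r,s)\mapsto r^is^j$ for $0\le i,j<p$ span all functions on $\mathbb F_p^2$, by Lagrange interpolation (equivalently, Vandermonde on $\mathbb F_p$).

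\textbf{Presentation and coalgebra structure.} Combining the previous two stages, the defining relations of $D(\ku\varGamma)$ are:
\begin{itemize}
\item commutativity of all generators;
\item $\alpha_i^{Np}=1$, which presents $\ku\varGamma$;
\item $\zeta_i^p=\zeta_i$ and $T_i^N=1$, which present $\ku^{\varGamma}$.
\end{itemize}
The algebra defined by these relations surjects onto $D(\ku\varGamma)$, and dimension counting gives $(pN)^2\cdot p^2N^2=\dim D(\ku\varGamma)$, so the surjection is an isomorphism. For the coalgebra structure, the $\alpha_i$ are group-like, the $T_i$ are group-like because they are characters, and the $\zeta_i$ are primitive because they are additive.

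The only point needing real care is the spanning/interpolation argument in the second stage. Its subtlety lies in using $\gcd(p,N)=1$ to split $\varGamma$; this hypothesis is essential there.
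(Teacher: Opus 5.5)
Your proof is correct and follows essentially the paper's route: you split $\ku^{\varGamma}\simeq\ku^{\varGamma_1}\otimes\ku^{\varGamma_2}$, identify $\ku^{\varGamma_1}$ with $\ku[\zeta_1,\zeta_2]/(\zeta_i^p-\zeta_i)$, and show that $\ku^{\varGamma_2}$ is generated by the characters $T_1,T_2$. You also make explicit several steps the paper leaves implicit: that the double of an abelian group algebra is the tensor product Hopf algebra, the interpolation argument for generation, and the final dimension count.
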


\subsection{The dual of restricted Laistrygonian algebra} 
We will write $\Lstr^*_{p}(q,a)$ instead of $(\Lstr_p(q,a))^*$. 
Consider the basis $\{\ub_i : i\in \I_3\}$ of $\Lstr^*_{p}(q,a)$ given by: 
	\begin{align}\label{eq:pairing-H-Hdual}
		\langle \ub_1, \xb_2 \rangle &= 1, & \langle \ub_2, \xb_1 \rangle &= 1, & \langle \ub_3, z_0 \rangle &= 1,
	\end{align}
	and the remaining evaluations being $0$. 
	It follows from  \eqref{eq:braiding-transpose}  that the braiding $c^*$  is  given by
		\begin{align*}
		(c^{\ast}(u_i \otimes u_j))_{i,j\in \I_3} \!&=\!
		\begin{pmatrix}
			u_1 \otimes u_1&   u_2  \otimes u_1& q^{-1} u_3 \otimes u_1
			\\
			u_1 \otimes (u_1+u_2) &  u_2  \otimes (u_1+u_2)& q^{-1} u_3  \otimes (u_2+au_1)
			\\
			q u_1 \otimes u_3 &  q u_2 \otimes u_3&  u_3  \otimes u_3
		\end{pmatrix}\!.
	\end{align*}


\begin{lemma}\label{lema: isomorfismo-V-inverse}
	The linear map $\psi\colon(\Lstr_{p}^*(q,a),c^*) \longrightarrow (\Lstr_{p}(q^{-1},a),c^{-1})$ given by
	\[u_1 \mapsto x_1, \quad u_2 \mapsto -x_2,\quad u_3\mapsto z_0,\] is an isomorphism  of braided vector spaces. \qed
\end{lemma}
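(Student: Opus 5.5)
The plan is to verify directly that $\psi$ intertwines the two braidings. Since $\psi$ sends a basis to a basis, it is a linear isomorphism, so it suffices to check $(\psi\ot\psi)\,c^* = c^{-1}\,(\psi\ot\psi)$ on $\Lstr^*_p(q,a)^{\ot 2}$. Here $c$ is the braiding \eqref{braid-las} of $\Lstr_p(q^{-1},a)$, which we denote $c'$ to avoid confusion with that of $\Lstr_p(q,a)$. To avoid computing $c'^{-1}$ explicitly, we check the equivalent identity
\begin{align*}
c'\,(\psi\ot\psi)\,c^*(u_i\ot u_j) = \psi(u_i)\ot\psi(u_j), \qquad i,j\in\I_3 .
\end{align*}
This is nine entry-wise verifications, each using the matrix of $c^*$ displayed above and the matrix \eqref{braid-las} with $q$ replaced by $q^{-1}$.

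We read the matrices as follows: entry $(i,j)$ of \eqref{braid-las} is $c(x_i\ot x_j)=(\alpha_i\rightharpoonup x_j)\ot x_i$, and likewise for $c^*$. The computation then splits into three groups.

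\emph{The block $i,j\in\I_2$.} This is the Jordan part. For example, $c^*(u_2\ot u_2)=u_2\ot(u_1+u_2)$ is sent by $\psi\ot\psi$ to $-x_2\ot x_1+x_2\ot x_2$. Applying $c'$ gives
\begin{align*}
-x_1\ot x_2+(x_2+x_1)\ot x_2 = x_2\ot x_2 = \psi(u_2)\ot\psi(u_2).
\end{align*}
The sign $u_2\mapsto -x_2$ is exactly what turns the upper-triangular action into its inverse. The other three entries of this block are similar.

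\emph{The diagonal entry $(3,3)$ and the entries involving $u_1$ and $u_3$.} Here the scalars cancel. For instance, $c^*(u_1\ot u_3)=q^{-1}u_3\ot u_1$ goes to $q^{-1}z_0\ot x_1$, and $c'(z_0\ot x_1)=q\,x_1\ot z_0$.

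\emph{The entries $(2,3)$ and $(3,2)$, where $a$ appears.} These are the only potential obstacle, since $a$ must cancel. We have $c^*(u_2\ot u_3)=q^{-1}u_3\ot(u_2+au_1)$, which $\psi\ot\psi$ sends to $q^{-1}z_0\ot(ax_1-x_2)$. Applying $c'$, which uses $c'(z_0\ot x_1)=q\,x_1\ot z_0$ and $c'(z_0\ot x_2)=q(x_2+ax_1)\ot z_0$, yields
\begin{align*}
a\,x_1\ot z_0-(x_2+ax_1)\ot z_0 = -x_2\ot z_0 = \psi(u_2)\ot\psi(u_3).
\end{align*}
Entry $(3,2)$ is analogous, with only the factor $q$ to cancel. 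This confirms that the normalisation $u_2\mapsto -x_2$ makes the $a$-terms cancel. Once all nine entries are checked, $\psi$ is an isomorphism of braided vector spaces.
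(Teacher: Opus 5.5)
Your proposal is correct and carries out the same direct entry-by-entry check of $c'(\psi\otimes\psi)c^*(u_i\otimes u_j)=\psi(u_i)\otimes\psi(u_j)$ that the paper leaves to the reader, since the paper states the lemma without proof; all nine identities hold, including the cancellation of the $a$-terms in entry $(2,3)$. One harmless slip: your description $c(x_i\otimes x_j)=(\alpha_i\rightharpoonup x_j)\otimes x_i$ should use $\alpha_1$ for $i=1,2$ and $\alpha_2$ for $i=3$, and $c^*$ does not have this shape (for instance $c^*(u_2\otimes u_1)=u_1\otimes(u_1+u_2)$); however, you read the displayed matrices directly in every computation, so nothing is affected.
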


\begin{definition} \label{def:les-dual}
$\cL^*_{p}(q,a)$  is the algebra generated by $\ub_i, i\in \I_3$, subject to the relations
\begin{align}
	\label{eq: relation-lestrygon-dual1}
&\ub_1^2 =  -2 \ub_{21}, & \quad 
&w_n\ub_1  = q\,  \ub_1 w_n, & \quad 
&w_{\ell+1}=0,  & \quad
&\ub_{2}\ub_{21}  =  \ub_{21}(\ub_2-\ub_1),
\end{align}
where 
\begin{equation*}
\ub_{21}\coloneqq \ub_2\ub_1-\ub_1\ub_2,\qquad  	w_0 \coloneqq \ub_3,\qquad  w_{n+1} \coloneqq \ub_2 w_n - q^{-1} w_n\ub_2,\quad  n\in \N_0.
\end{equation*}
\end{definition}

\begin{lemma}\label{lemma:Nichols dual-lestrigon}
 $\cL^*_{0}(q,a)\simeq \toba(\Lstr^*_{0}(q,a))$ as algebras.
\end{lemma}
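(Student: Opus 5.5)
Since $p=0$, the plan is to transport the known result, Lemma \ref{lem-nichol-lest-ch0}, along the isomorphism of braided vector spaces in Lemma \ref{lema: isomorfismo-V-inverse}. Nichols algebras are functorial in braided vector spaces, so $\psi$ induces an algebra isomorphism $\toba(\Lstr_0^*(q,a),c^*)\simeq \toba(\Lstr_0(q^{-1},a),c^{-1})$. For any braided vector space $(V,c)$, the map reversing the order of tensors identifies $T(V,c^{-1})$ with $T(V,c)^{\operatorname{op}}$ and sends $\ker\Omega_n$ for $c^{-1}$ onto $\ker\Omega_n$ for $c$. Indeed, reversal conjugates the action of $\sigma_i$ into that of $\sigma_{n-i}^{\pm1}$, and $\Omega_n$ is stable under the corresponding anti-automorphism of $\ku\mathbb B_n$. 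Hence $\toba(V,c^{-1})\simeq\toba(V,c)^{\operatorname{op}}$. Combining these,
\[
\toba(\Lstr_0^*(q,a))\simeq \toba(\Lstr_0(q^{-1},a))^{\operatorname{op}}\simeq \cL_0(q^{-1},a)^{\operatorname{op}},
\]
where the second step is Lemma \ref{lem-nichol-lest-ch0} applied to $q^{-1}$. The hypotheses on $q$ and the normalization of $a$ are symmetric under $q\mapsto q^{-1}$, so that lemma applies.

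It therefore suffices to produce an algebra isomorphism $\cL_0^*(q,a)\to \cL_0(q^{-1},a)^{\operatorname{op}}$ with $u_1\mapsto x_1$, $u_2\mapsto -x_2$, $u_3\mapsto z_0$, and to check that it is compatible with the two maps above. I will write $\cdot$ for the product in the opposite algebra and translate the relations \eqref{eq: relation-lestrygon-dual1} one at a time.
\begin{itemize}
\item \emph{The element $u_{21}$.} Its image is $(-x_2)\cdot x_1 - x_1\cdot(-x_2) = -x_1x_2+x_2x_1 = x_{21}$.
\item \emph{The relation $u_1^2=-2u_{21}$.} It becomes $x_1^2=-2x_{21}$, which holds in $\cL_0(q^{-1},a)$.
\item \emph{The elements $w_n$.} The recursion $w_{n+1}=u_2w_n-q^{-1}w_nu_2$ maps to $-(w_nx_2 - q^{-1}x_2w_n) = q^{-1}(x_2w_n - q\,w_nx_2)$. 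This is exactly the recursion $z_{n+1}=x_2z_n-q^{-1}z_nx_2$ of $\cL_0(q^{-1},a)$, up to the scalar factor $q^{-1}$. By induction $w_n\mapsto q^{-n}z_n$, so $w_{\ell+1}=0$ maps to $z_{\ell+1}=0$.
\item \emph{The relation $w_nu_1=qu_1w_n$.} It becomes $x_1z_n=q\,z_nx_1$, i.e. $z_n x_1 = q^{-1}x_1 z_n$, which is the relation $x_1z_n=q^{-1}z_nx_1$ of $\cL_0(q^{-1},a)$.
\item \emph{The relation $u_2u_{21}=u_{21}(u_2-u_1)$.} It becomes $-x_{21}x_2=(-x_2-x_1)x_{21}$, which is the last relation in \eqref{eq: relation-lestrygon-geral-1}.
\end{itemize}
Thus the defining relations correspond bijectively under an invertible change of generators, and the map is an isomorphism with an evident inverse.

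The main obstacle is bookkeeping rather than substance. First, the identification $\toba(V,c^{-1})\simeq\toba(V,c)^{\operatorname{op}}$ must be justified carefully: one checks directly that reversal intertwines the quantum symmetrizers. Alternatively, the ideal of relations can be characterized as the radical of the canonical Hopf pairing between $T(V)$ and $T(V^*)$. Second, the signs and powers of $q$ in the $w_n$ recursion must be tracked exactly so that they match those in $\cL_0(q^{-1},a)$. Finally, one must confirm that the composite of the three isomorphisms is the identity on the generators $u_i$; this holds by construction of $\psi$.
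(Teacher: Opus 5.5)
Your proposal has a genuine gap. The false step is the claim that reversing tensors identifies $\toba(V,c^{-1})$ with $\toba(V,c)^{\operatorname{op}}$. Reversal conjugates $c_{i,i+1}$ into $(\tau c\tau)_{n-i,n-i+1}$, where $\tau\colon v\ot w\mapsto w\ot v$ is the flip. So what reversal actually proves is $\toba(V,c)^{\operatorname{op}}\simeq\toba(V,\tau c\tau)$, and on $\Lstr_0(q^{-1},a)$ one has $\tau c\tau\neq c^{\pm1}$.

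The correct fact is that $c$ and $c^{-1}$ give the \emph{same} ideal. The operator of a reduced word for $w\in S_n$ computed with $c^{-1}$ is the inverse of the operator of $w^{-1}$ computed with $c$. Moreover, if $M(w)$ denotes the positive lift of $w$ to $\mathbb B_n$, then $M(w)^{-1}=M(w_0)^{-1}M(w_0w^{-1})$. Hence the symmetrizer for $c^{-1}$ equals $\Theta_n^{-1}\Omega_n$, where $\Theta_n$ is the invertible operator of $M(w_0)$. Therefore $\toba(V,c^{-1})=\toba(V,c)$ as quotients of $T(V)$.

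Concretely, $x_1\ot z_0-q^{-1}z_0\ot x_1$ lies in $\ker(1+c)=\ker(1+c^{-1})$ on $\Lstr_0(q^{-1},a)$. Its reversal does not when $q^2\neq1$, and the reversed Jordan relation fails for every $q$.

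Your relation check hides this with two slips. First, $q^{-1}(x_2w_n-q\,w_nx_2)$ is not a multiple of $x_2z_n-q^{-1}z_nx_2$. Second, $z_nx_1=q^{-1}x_1z_n$ is equivalent to $x_1z_n=q\,z_nx_1$, not to $x_1z_n=q^{-1}z_nx_1$. Consequently your map $\cL^*_0(q,a)\to\cL_0(q^{-1},a)^{\operatorname{op}}$ sends $w_0u_1=qu_1w_0$ to $x_1z_0=q\,z_0x_1$, which forces $(q-q^{-1})z_0x_1=0$. Since $p=0$ imposes no condition on $q$, the map is not well defined in general. Only the Jordan-block relations are respected, because there the opposite product and the sign $u_2\mapsto -x_2$ compensate each other.

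For the same reason your closing claim, that the composite fixes every $u_i$, is false. Computing $\ker(1+c^*)$ gives $u_1^2=2u_{21}$ in $\toba(\Lstr^*_0(q,a))$, whereas $u_1^2=-2u_{21}$ in $\cL^*_0(q,a)$.

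The remedy is to drop the opposite algebra altogether. Definition \ref{def:les-dual} is exactly the presentation \eqref{eq: relation-lestrygon-geral-1} of $\cL_0(q^{-1},a)$ with $x_i$ renamed $u_i$:
\begin{itemize}
\item the recursion for $w_n$ is that of $z_n$ with $q^{-1}$;
\item the relation $w_nu_1=qu_1w_n$ reads $x_1z_n=q^{-1}z_nx_1$;
\item the last relation on each side is a consequence of the first one in characteristic $0$.
\end{itemize}
Hence $u_1\mapsto x_1$, $u_2\mapsto x_2$, $u_3\mapsto z_0$ is an isomorphism $\cL^*_0(q,a)\simeq\cL_0(q^{-1},a)$. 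Now compose with Lemma \ref{lem-nichol-lest-ch0} for $q^{-1}$, the equality $\toba(V,c)=\toba(V,c^{-1})$, and the inverse of $\psi$. This yields an algebra isomorphism $\cL^*_0(q,a)\to\toba(\Lstr^*_0(q,a))$ given by $u_1\mapsto u_1$, $u_2\mapsto -u_2$, $u_3\mapsto u_3$. That is what the paper's one-line appeal to Lemmas \ref{lem-nichol-lest-ch0} and \ref{lema: isomorfismo-V-inverse} amounts to, with the fact $\toba(V,c^{-1})=\toba(V,c)$ left implicit.
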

\begin{proof}
	It follows from Lemmas \ref{lem-nichol-lest-ch0} and \ref{lema: isomorfismo-V-inverse}.
\end{proof}

As in the case $\lstr_p(q,a)$, we can define the restricted version for $\lstr_p^*(q,a)$.
\begin{definition} Let $J_p^*$ be the ideal of  $\cL_p^*(q,a)$ generated by 
	\begin{align}\label{eq:trunc-Lestrygon-dual-char-p-arbitraria}
	\ub_1^p,\qquad   \ub_{21}^p,\qquad \ub_2^{p(1+\delta_{2,p})},\qquad   w_n^p,\quad   n\in \I_{0, \ell}. 
	\end{align} The \emph{dual restricted Laistrygonian algebra} $\lstr_p^*(q,a)$ is the quotient of  the algebra $\cL_p^*(q,a)$ by the ideal $J_p^*$. 
\end{definition}

\begin{lemma} 
$\lstr_p^*(q,a) \simeq \toba(\Lstr^*_p(q,a))$
is a quotient of  $\cL_p^*(q,a)$ of dimension $p^{\ell + 3 + 4\delta_{2,p}}$.
\end{lemma}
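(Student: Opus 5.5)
The plan is to reduce everything to Lemma~\ref{lem:lest-rest-nichols} by means of the braided isomorphism of Lemma~\ref{lema: isomorfismo-V-inverse}. The first ingredient is a general fact about Nichols algebras: for any braided vector space $(\cV,c)$, the identity of $\cV$ extends to an algebra isomorphism
\[
\toba(\cV,c^{-1})\simeq \toba(\cV,c)^{\operatorname{op}}.
\]
To see this, note that reversing the order of tensor factors intertwines the quantum symmetrizer for $c$ with the one for $c^{-1}$. Hence the kernels of the $\Omega_n$ correspond, and the defining ideals are exchanged by the anti-automorphism of $T(\cV)$ that reverses words. Combining this with $\psi$, I get algebra isomorphisms
\[
\toba(\Lstr^*_p(q,a))\simeq \toba(\Lstr_p(q^{-1},a),c^{-1})\simeq \toba(\Lstr_p(q^{-1},a))^{\operatorname{op}}\simeq \lstr_p(q^{-1},a)^{\operatorname{op}},
\]
where the last step is Lemma~\ref{lem:lest-rest-nichols} applied with $q^{-1}$ in place of $q$. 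This is legitimate because $q^{-1}\in\G_N'$ as well, and $a$ (hence $\ell$) is unchanged.

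Next I will transport the presentation of $\lstr_p(q^{-1},a)$, given by \eqref{eq: relation-lestrygon-geral-1} and \eqref{eq:trunc-Lestrygon-char-p-arbitraria} with $q$ replaced by $q^{-1}$, through the opposite multiplication and the substitution $x_1\mapsto u_1$, $x_2\mapsto -u_2$, $z_0\mapsto u_3$. The element $x_{21}$, computed in the opposite algebra, becomes $u_1(-u_2)-(-u_2)u_1=u_{21}$. Hence $x_1^2=-2x_{21}$ becomes $u_1^2=-2u_{21}$, and $x_{21}^p=0$ and $x_1^p=0$ become $u_{21}^p=0$ and $u_1^p=0$. The relation $x_{21}x_2=(x_2+x_1)x_{21}$ becomes, after reversal and the sign on $u_2$, the relation $u_2u_{21}=u_{21}(u_2-u_1)$. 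The iterated commutators $z_n$ are carried to nonzero scalar multiples $\pm q^{\pm n}w_n$ of the elements $w_n$ of Definition~\ref{def:les-dual}. Therefore $z_{\ell+1}=0$ and $z_n^p=0$ become $w_{\ell+1}=0$ and $w_n^p=0$, and $x_1z_n=q^{-1}z_nx_1$ becomes a $q$-commutation between $w_n$ and $u_1$. Finally, $x_2^{p(1+\delta_{2,p})}=0$ becomes $u_2^{p(1+\delta_{2,p})}=0$. Thus the image of the ideal of relations is exactly the ideal generated by \eqref{eq: relation-lestrygon-dual1} and $J_p^*$, which gives $\lstr_p^*(q,a)\simeq\toba(\Lstr^*_p(q,a))$ as a quotient of $\cL^*_p(q,a)$.

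For the dimension, I will combine this isomorphism with Lemma~\ref{lem:lest-rest-nichols} for the parameters $(q^{-1},a)$, and cross-check against Lemma~\ref{lemma:functor-dual-hopf}, which gives $\dim\toba(\cW)=\dim\toba(\cV)$. Both routes give exactly the dimension of $\lstr_p(q^{-1},a)$ recorded in Lemma~\ref{lem:lest-rest-nichols}, namely $p^{\ell+3+2\delta_{2,p}}$. For $p\neq2$ this agrees with the stated value. For $p=2$, the argument as planned produces $p^{\ell+3+2\delta_{2,p}}$ rather than the stated $p^{\ell+3+4\delta_{2,p}}$. I would recheck that case against \cite[Propositions 3.6 and 3.7]{A-Bagio-FF2} before settling on the exponent.

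The main obstacle is the bookkeeping of powers of $q$ in the translation of the $z_n$ and of the relation $x_1z_n=q^{\pm1}z_nx_1$. A naive computation in the opposite algebra yields $w_nu_1=q^{-1}u_1w_n$ and a recursion for $w_n$ with $q$ in place of $q^{-1}$. I would first recheck the matrix of $c^*$ against the matrix of $c^{-1}$ for $\Lstr_p(q^{-1},a)$, since the placement of $q$ versus $q^{-1}$ on the $(1,3)$ and $(3,1)$ entries decides which convention is correct. If a discrepancy remains, I would instead use $\psi$ composed with the diagonal rescaling $u_3\mapsto u_3$ and $q\leftrightarrow q^{-1}$ in the realization, or verify directly that the listed relations hold in $\toba(\Lstr^*_p(q,a))$ through the braided derivations $\partial_i$, for instance that $w_{\ell+1}$ and $w_n^p$ are annihilated by all of them. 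The dimension count then forces equality with the presentation.
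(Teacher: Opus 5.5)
Your overall strategy is the paper's: its proof just combines Lemma~\ref{lem:lest-rest-nichols} with the braided isomorphism $\psi$ of Lemma~\ref{lema: isomorfismo-V-inverse}. However, the bridge you use from $c^{-1}$ back to $c$ is false, and it is exactly what produces the $q$-mismatch you call the ``main obstacle''.

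Reversing words in $T(\cV)$ conjugates $c_i$ into $(\tau c\tau)_{n-i}$, where $\tau$ is the flip. So reversal identifies $\toba(\cV,c)^{\operatorname{op}}$ with $\toba(\cV,\tau c\tau)$, not with $\toba(\cV,c^{-1})$. For example, take a diagonal braiding $c(x_i\ot x_j)=q_{ij}x_j\ot x_i$ with $q_{12}q_{21}=1\neq q_{12}^2$. Both $\toba(\cV,c)$ and $\toba(\cV,c^{-1})$ satisfy $x_1x_2=q_{12}x_2x_1$, whereas $\toba(\cV,c)^{\operatorname{op}}$ satisfies $x_1x_2=q_{12}^{-1}x_2x_1$.

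What is true is that $\toba(\cV,c^{-1})=\toba(\cV,c)$ as quotients of $T(\cV)$. Let $T_w$ (resp. $T'_w$) be the operator on $\cV^{\ot n}$ given by the positive lift of $w\in S_n$ acting through $c$ (resp. $c^{-1}$). Then $T'_w=T_{w^{-1}}^{-1}=T_{w_0}^{-1}T_{w_0w}$, so $\Omega_n(c^{-1})=T_{w_0}^{-1}\Omega_n(c)$ and the kernels coincide. In degree $2$ this is just $1+c^{-1}=c^{-1}(1+c)$.

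For the braiding at hand, your intermediate claim fails outright. It would make $\lstr_p(q^{-1},a)$ equal to its opposite via the identity, but $x_1^2=-2x_{21}$ reverses to $x_1^2=2x_{21}$, and $x_1z_0=q^{-1}z_0x_1$ reverses to $x_1z_0=qz_0x_1$. Accordingly, along your route $z_1$ goes to a multiple of $u_2u_3-qu_3u_2$. This is not proportional to $w_1=u_2u_3-q^{-1}u_3u_2$ when $q^2\neq1$. So your assertion that the transported ideal is the one generated by \eqref{eq: relation-lestrygon-dual1} and $J_p^*$ is unjustified; your route actually yields the relations of $\lstr^*_p(q^{-1},a)$. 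Rechecking the matrices will not help, since $\psi$ is correct as stated, and no rescaling turns $q$ into $q^{-1}$.

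The repair is to drop the opposite algebra entirely. By the fact above and Lemma~\ref{lem:lest-rest-nichols} applied to $q^{-1}$, $\psi$ induces $\toba(\Lstr^*_p(q,a))\simeq\toba(\Lstr_p(q^{-1},a),c^{-1})\simeq\lstr_p(q^{-1},a)$. The defining relations of $\lstr^*_p(q,a)$ are literally those of $\lstr_p(q^{-1},a)$ after renaming $x_1,x_2,z_0$ as $u_1,u_2,u_3$, with $w_n$ corresponding to $z_n$. The one apparent difference is harmless: $u_2u_{21}=u_{21}(u_2-u_1)$ is equivalent to $u_{21}u_2=(u_2+u_1)u_{21}$ modulo the other relations. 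For $p\neq2$ both follow from $u_{21}=-\tfrac12u_1^2$; for $p=2$ they agree because $u_1$ commutes with $u_{21}$.

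Hence $u_1\mapsto u_1$, $u_2\mapsto-u_2$, $u_3\mapsto u_3$ gives $\toba(\Lstr^*_p(q,a))\simeq\lstr^*_p(q,a)$. This is not the identity on generators: for $p\neq2$, computing $\ker(1+c^*)$ shows that $u_1^2=2u_{21}$ holds in $\toba(\Lstr^*_p(q,a))$.

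On the dimension you are right. Lemma~\ref{lemma:functor-dual-hopf} gives $\dim\toba(\cW)=\dim\toba(\cV)$, and together with Lemma~\ref{lem:lest-rest-nichols} this yields $p^{\ell+3+2\delta_{2,p}}$, i.e. $2^{\ell+5}$ for $p=2$. So the exponent $4\delta_{2,p}$ in the statement is a misprint, and the paper's one-line proof from the same two lemmas also only gives $2\delta_{2,p}$.
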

\begin{proof}
It follows from Lemmas \ref{lem:lest-rest-nichols} and \ref{lema: isomorfismo-V-inverse}.
\end{proof}

By  Lemma \ref{lemma:functor-dual-hopf} and \cite[Proposition 2.2.1]{A-Granha}, we realize $\Lstr_{p}^*(q,a)$ in $\yd{\ku^\varGamma}$.
Explicitly, the action is given by:
\begin{align}\label{eq:struct-yd-dual}
\begin{aligned}
&	\zeta_j \rightharpoonup u_i = \delta_{1,j} u_i,  &\qquad &T_j \rightharpoonup u_i = q^{\delta_{1,j}} u_i,,\\
&	\zeta_j \rightharpoonup w_0 = \delta_{2,j} w_0, &\qquad & T_j \rightharpoonup w_0 = q^{-\delta_{2,j}} w_0,
\end{aligned}
\end{align}
$i, j\in \I_2$. Now, denote $\zeta(a):=\zeta_1 + a \zeta_2$. The coaction of $\ku^\varGamma$ on $\Lstr_{p}^*(q,a)$ is given by:
\begin{align}
\rho(u_1) &= T_2 \ot u_1, & \rho(u_2) &= T_2 \ot u_2 + T_2\zeta(a)\ot u_1, & \rho(w_0) &= T_1 \ot w_0.
\end{align}

\subsection{The Drinfeld double of  $\mathtt{H_p(q,a)}$} To present the Drinfeld double of $\mathtt{H_p(q,a)}$, we begin with the following result.

\begin{lemma}\label{lemma:H dual-op-both}
The Hopf algebras $\big(\mathtt{H_p(q,a)}^*\big)^{\rm op}$ and $\big(\lstr_p^*(q,a)\# \ku^\varGamma\big)^{\rm op}$ are isomorphic. Moreover, $\big(\mathtt{H_p(q,a)}^*\big)^{\rm op}$ is presented by generators  $\ub_i$, $w_0$, $\zeta_i$, $T_i$, $i \in \I_2$, satisfying the relations \eqref{eq:trunc-zetai-Ti},   \eqref{eq: relation-lestrygon-dual1}, \eqref{eq:trunc-Lestrygon-dual-char-p-arbitraria} and
\begin{align}\label{eq relations H dual-op1}
	\begin{aligned}
			&	\zeta_1\zeta_2	=\zeta_2\zeta_1,		  & & \zeta_j T_i=T_i\zeta_j,		&  &  T_1T_2 = T_2T_1, \\
 	&\ub_i T_j  = q^{\delta_{1,j}} T_j  \ub_i,& & \ub_i  \zeta_j = \zeta_j   \ub_i  + \delta_{1,j} \ub_i,      & &\\
 	& 	w_0  T_j = q^{-\delta_{2,j}}T_j  w_0, & &	w_0 \zeta_j = \zeta_j w_0+ \delta_{2,j} w_0,   & & i,j \in \I_2.\\
	\end{aligned}
\end{align}	
The comultiplication is determined by
\begin{align}\label{eq: Delta H dual}
	\begin{aligned}
		&T_i\in G((\mathtt{H_p(q,a)}^*)^{\operatorname{op}}),&  &\zeta_i  \in \Pc ((\mathtt{H_p(q,a)}^*)^{\operatorname{op}}),\quad  i \in \I_2,\\ 
		& \ub_1 \in \Pc_{T_2,1}((\mathtt{H_p(q,a)}^*)^{\operatorname{op}}), & 	&w_0 \in \Pc_{T_1,1}((\mathtt{H_p(q,a)}^*)^{\operatorname{op}}),\\
	 & \Delta(\ub_2) = \ub_2\ot 1 + T_2\ot \ub_2 + \zeta(a)T_2 \ot \ub_1.
	\end{aligned}
\end{align} \qed
\end{lemma}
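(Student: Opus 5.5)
The plan is to combine the general description of the dual of a bosonization with the explicit Yetter--Drinfeld data already computed for $\Lstr_p^*(q,a)$ over $\ku^\varGamma$. By Lemma \ref{lemma:dual-of-a-bosonization}, applied with $L=\ku\varGamma$ and $\cV=\Lstr_p(q,a)$, we have $\mathtt{H_p(q,a)}^*\simeq \toba(\cW)\#\ku^\varGamma$, where $\cW=F(\cV^*)$. By Lemma \ref{lemma:functor-dual-hopf}, $\cW\simeq(\Lstr_p^*(q,a),c^*)$ as braided vector spaces, so $\toba(\cW)\simeq\toba(\Lstr_p^*(q,a))\simeq\lstr_p^*(q,a)$ by the preceding lemma. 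Taking opposite algebras on both sides gives the first assertion. The one point to check is that the isomorphism of braided vector spaces is compatible with the Yetter--Drinfeld structure, so that the bosonizations agree. This holds because the realization \eqref{eq:struct-yd-dual} together with the stated coaction is, by construction, the structure of $F(\cV^*)$ transported along the basis $\{u_i\}$.

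For the presentation, I would use the standard presentation of a bosonization $R\# K$: it is generated by $R$ and $K$, subject to the relations of each factor and the smash relations $k r = (k_{(1)}\rightharpoonup r)k_{(2)}$. I then pass to the opposite algebra. The generators $\zeta_i,T_i$ of $\ku^\varGamma$ satisfy \eqref{eq:trunc-zetai-Ti} and commute with each other by Lemma \ref{lema:double-group}. Since $\ku^\varGamma$ is commutative, taking the opposite does not affect these relations.

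For the Nichols algebra part, I note that $\lstr_p^*(q,a)^{\rm op}$ is presented by the relations \eqref{eq: relation-lestrygon-dual1} and \eqref{eq:trunc-Lestrygon-dual-char-p-arbitraria}, read in the opposite product. This needs a small check of conventions, and I expect it to be the main obstacle. Definition \ref{def:les-dual} should already be written with the opposite multiplication in mind: compare $\ub_2\ub_{21}=\ub_{21}(\ub_2-\ub_1)$ with $\xb_{21}\xb_2=(\xb_2+\xb_1)\xb_{21}$, via $\psi$ of Lemma \ref{lema: isomorfismo-V-inverse}. Indeed, $\psi$ identifies $(\cV^*,c^*)$ with $(\Lstr_p(q^{-1},a),c^{-1})$, and $\toba(V,c^{-1})\simeq\toba(V,c)^{\rm op}$. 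So I would verify directly that applying $\psi$ and reversing products transforms the relations of $\cL_p(q^{-1},a)$ into \eqref{eq: relation-lestrygon-dual1}. For example, $x_1z_n=q^{-1}z_nx_1$ reversed becomes $w_n u_1 = q\, u_1w_n$ after rescaling. The same procedure yields the truncations.

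The cross relations follow from the opposite smash relation. In the opposite algebra, $k r = (k_{(1)}\rightharpoonup r)k_{(2)}$ becomes $r\cdot_{\rm op} k = k_{(2)}\cdot_{\rm op}(k_{(1)}\rightharpoonup r)$.
\begin{itemize}
\item With $k=T_j$ grouplike this gives $\ub_iT_j=q^{\delta_{1,j}}T_j\ub_i$ and $w_0T_j=q^{-\delta_{2,j}}T_jw_0$.
\item With $k=\zeta_j$ primitive it gives $\ub_i\zeta_j=\zeta_j\ub_i+\delta_{1,j}\ub_i$, and similarly for $w_0$.
\end{itemize}
All of these come directly from \eqref{eq:struct-yd-dual}.

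The coproduct is unchanged by passing to the opposite algebra. It is given by the bosonization formula $\Delta(r)=r^{(1)}(r^{(2)})_{(-1)}\otimes (r^{(2)})_{(0)}$ for primitive $r\in\cW$, that is, $\Delta(r)=r\otimes1+r_{(-1)}\otimes r_{(0)}$. Inserting the coactions $\rho(u_1)=T_2\otimes u_1$, $\rho(u_2)=T_2\otimes u_2+T_2\zeta(a)\otimes u_1$ and $\rho(w_0)=T_1\otimes w_0$ yields \eqref{eq: Delta H dual}. Finally, the dimension count from the previous lemma times $|\varGamma|$ confirms that no further relations are needed.
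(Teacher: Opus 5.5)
Your route is the one the paper intends. The lemma is stated without proof, as a direct consequence of Lemma~\ref{lemma:dual-of-a-bosonization}, Lemma~\ref{lemma:functor-dual-hopf} and the data \eqref{eq:struct-yd-dual}. Two parts of your argument are correct: the cross relations \eqref{eq relations H dual-op1} from the opposite smash relation, and \eqref{eq: Delta H dual} from $\Delta(r)=r\otimes 1+r_{(-1)}\otimes r_{(0)}$. The gap is the step you postponed, namely the relations among $\ub_1,\ub_2,w_0$ inside $(\mathtt{H_p(q,a)}^*)^{\rm op}$, and both tools you use for it fail.

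First, $\toba(V,c^{-1})\simeq\toba(V,c)^{\rm op}$ (identity on $V$) is false. Since $M^{c^{-1}}(\sigma)=M^{c}(\sigma^{-1})^{-1}=M^{c}(w_0)^{-1}M^{c}(w_0\sigma)$, one gets $\Omega_n^{c^{-1}}=M^{c}(w_0)^{-1}\Omega_n^{c}$. So $\toba(V,c^{-1})$ and $\toba(V,c)$ are the same quotient of $T(V)$. Reversing products corresponds instead to $\toba(V,\tau c\tau)$, with $\tau$ the flip. Second, your sample computation is wrong: reversing $x_1z_n=q^{-1}z_nx_1$ and applying $\psi$ gives $w_n\ub_1=q^{-1}\ub_1w_n$, and no rescaling of $w_n$ turns $q^{-1}$ into $q$.

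Doing the check directly settles it. Pair with the coproducts of $x_2^2$, $x_1x_2$, $x_2x_1$ and of $x_2z_0$, $z_0x_2$, $x_1z_0$. This gives, in $\mathtt{H_p(q,a)}^*$ itself, $\ub_1^2=2\ub_{21}$ and $w_0\ub_1=q\,\ub_1w_0$. Hence in the opposite algebra $\ub_1^2=-2\ub_{21}$, but $w_0\ub_1=q^{-1}\ub_1w_0$.

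The double confirms this. From $\ub_1\xb_2=\xb_2\ub_1+1-\alpha_1T_2$, $w_0\xb_2=q\,\xb_2w_0$, $w_0\alpha_1=q\,\alpha_1w_0$ and $w_0T_2=q^{-1}T_2w_0$ one gets
\begin{align*}
(w_0\ub_1-\lambda\,\ub_1w_0)\,\xb_2=q\,\xb_2\,(w_0\ub_1-\lambda\,\ub_1w_0)+(1-\lambda q)(1-\alpha_1T_2)\,w_0 .
\end{align*}
By the triangular decomposition $(1-\alpha_1T_2)w_0\neq0$, so a relation $w_0\ub_1=\lambda\,\ub_1w_0$ forces $\lambda=q^{-1}$.

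So \eqref{eq: relation-lestrygon-dual1} is not invariant under reversing products. The quadratic relation $\ub_1^2=-2\ub_{21}$ holds in the opposite product, while $w_n\ub_1=q\,\ub_1w_n$ holds in the original one. Your first paragraph (relations valid in $\toba(\cW)$ with its own product) and your third (relations written for the opposite product) therefore cannot both be used.

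Carried out correctly, your argument proves the presentation of $(\mathtt{H_p(q,a)}^*)^{\rm op}$ with $q$ replaced by $q^{-1}$ in the $w$-relations, including the recursion defining $w_{n+1}$. That agrees with the stated presentation exactly when $q^2=1$, in particular for $q=1$, the only case used from Section~\ref{sec:extensions} on.
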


Now we present the Drinfeld double of $\mathtt{H_p(q,a)}$ by generators and relations.

\begin{prop}\label{prop:double-both}
	The Drinfeld double $\mathtt{D_p(q,a)}=D(\mathtt{H_p(q,a)})$ is presented by ge\-ne\-rators   $\xb_i$, $z_0$, $\alpha_i$, $\ub_i$, $w_0$, $T_i$, $\zeta_i$,   $i \in \I_2$, with relations \eqref{eq: relation-lestrygon-geral-1}, \eqref{eq:trunc-Lestrygon-char-p-arbitraria}, \eqref{eq: relations H}, \eqref{eq: alphaNp}, \eqref{eq:trunc-zetai-Ti}, \eqref{eq: relation-lestrygon-dual1}, \eqref{eq:trunc-Lestrygon-dual-char-p-arbitraria},  \eqref{eq relations H dual-op1} and
	
	\begin{equation}\label{eq:double-relations-1} 
	\begin{aligned}
		&T_j \xb_i    = q^{\delta_{1,j}} \xb_i T_j,&  & T_jz_0 = q^{-\delta_{2,j}}z_0 T_j,\\ 
	&\zeta_j \xb_i = \xb_i \zeta_j + \delta_{1,j}\xb_i, & & \zeta_j z_0 = z_0 \zeta_j+\delta_{2,j} z_0,\\ 
	&	\ub_j \alpha_1 = \alpha_1 (\ub_j+\delta_{2,j}\ub_1), & & \ub_j \alpha_2 = q^{-1} \alpha_2 (\ub_j+a\,  \delta_{2,j}\ub_1),\\
		& w_0 \alpha_i = q^{\delta_{i,1}} \alpha_i w_0,& &  \ub_j z_0 = q^{-1} z_0  (\ub_j+a\,  \delta_{2,j}\ub_1),\\
		&\ub_1 \xb_1 = \xb_1 \ub_1, &  &\ub_2 \xb_1 = \xb_1 (\ub_2 + \ub_1) + 1 - \alpha_1 T_2,	         \\
	   &\ub_1 \xb_2 = \xb_2 \ub_1 + 1 - \alpha_1 T_2,       & & \ub_2 \xb_2 = \xb_2 (\ub_2 + \ub_1) -  \alpha_1 T_2\zeta(a),       \\
	&	w_0 \xb_i = q \xb_i w_0, & &	w_0 z_0 = z_0 w_0 + 1 - \alpha_2 T_1 .
	\end{aligned}
\end{equation}	
The comultiplication of $\mathtt{D_p(q,a)}$ is determined by  \eqref{eq: Delta H} and \eqref{eq: Delta H dual}.
\end{prop}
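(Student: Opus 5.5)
We apply Proposition~\ref{prop:the-double-in-general} with $L=\ku\varGamma$, $\cV=\Lstr_p(q,a)$ and $\cW=F(\cV^*)$, the realization of $\Lstr^*_p(q,a)$ in $\yd{\ku^\varGamma}$ given by \eqref{eq:struct-yd-dual} and the coaction displayed after it.

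\textbf{Relations of the three factors.} By Lemmas~\ref{lem:lest-rest-nichols} and \ref{lemma:presentacion-H}, $\toba(\cV)=\lstr_p(q,a)$ has relations \eqref{eq: relation-lestrygon-geral-1} and \eqref{eq:trunc-Lestrygon-char-p-arbitraria}. By Lemma~\ref{lemma:H dual-op-both} and the lemma identifying $\lstr^*_p(q,a)$ with $\toba(\Lstr^*_p(q,a))$, the factor $\toba(\cW)^{\rm op}$, in the generators $\ub_1,\ub_2,w_0$ and with the conventions already built into Definition~\ref{def:les-dual}, has relations \eqref{eq: relation-lestrygon-dual1} and \eqref{eq:trunc-Lestrygon-dual-char-p-arbitraria}. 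By Lemma~\ref{lema:double-group}, $D(\ku\varGamma)$ is commutative with relations \eqref{eq: alphaNp} and \eqref{eq:trunc-zetai-Ti}. The bosonization relations \eqref{eq: relations H} and \eqref{eq relations H dual-op1} are exactly the first two lines of \eqref{eq:relations-doble-general} restricted to $\ku\varGamma$ acting on $\cV$ and to $\ku^\varGamma$ acting on $\cW$.

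\textbf{Cross relations, first two families.} These come from the first two formulas of \eqref{eq:relations-doble-general}.
\begin{itemize}[leftmargin=*]
\item For $s'\in\{T_j,\zeta_j\}$ and $v\in\{\xb_1,\xb_2,z_0\}$, use $\Delta(T_j)=T_j\ot T_j$ and $\Delta(\zeta_j)=\zeta_j\ot1+1\ot\zeta_j$, together with the coaction $\rho(\xb_i)=\alpha_1\ot \xb_i$, $\rho(z_0)=\alpha_2\ot z_0$. Then $T_j v=\langle T_j,g\rangle vT_j$ and $\zeta_j v=v\zeta_j+\langle\zeta_j,g\rangle v$, where $g$ is the degree of $v$. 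Evaluating \eqref{eq:pairing-G-HGdual-ders} and \eqref{eq:pairing-G-HGdual-chars} gives the first two lines of \eqref{eq:double-relations-1}.
\item For $w\in\{\ub_1,\ub_2,w_0\}$ and $s=\alpha_i$, we have $w\alpha_i=\sum_\ell\langle w,\alpha_i\rightharpoonup v_\ell\rangle\alpha_i w_\ell$. Transposing the action \eqref{eq:YDlestrygon-action-coaction} in the dual basis \eqref{eq:pairing-H-Hdual} gives the $\ub_j\alpha_i$ and $w_0\alpha_i$ relations. Note that $\ub_1$ is dual to $\xb_2$ and $\ub_2$ is dual to $\xb_1$, which is why $\alpha_1\rightharpoonup\xb_2=\xb_2+\xb_1$ produces the term $\ub_2\mapsto\ub_2+\ub_1$.
\end{itemize}

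\textbf{Cross relations, third family (main obstacle).} The third formula of \eqref{eq:relations-doble-general} gives the relations $w_iv_j$. The middle term is $\sum_\ell\langle w_i,\alpha\rightharpoonup v_\ell\rangle v_j w_\ell$, with $\alpha$ the degree of $v_j$; this is computed as in the previous step. It produces $\xb_1(\ub_2+\ub_1)$, $\xb_2\ub_1$, $q^{-1}z_0(\ub_j+a\delta_{2,j}\ub_1)$, $q\xb_iw_0$, and so on. The constant term is $\langle w_i,v_j\rangle$.

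The last term is $\sum_k\langle w_i,\alpha\rightharpoonup v_\ell\rangle\langle w_\ell,h_k\rightharpoonup v_t\rangle\langle w_t,\Ss(v_j)\rangle\,\alpha f_k$, with $\Ss(v_j)=-\alpha^{-1}v_j$. Resumming over the dual basis $\{h_k,f_k\}$ of $\ku\varGamma$, this term equals $-\alpha\,\phi_{ij}$, where $\phi_{ij}\in\ku^\varGamma$ is the function
\[
\phi_{ij}(g)=\langle w_i, \alpha g\alpha^{-1}\rightharpoonup v_j\rangle=\langle w_i, g\rightharpoonup v_j\rangle .
\]
The last equality holds because $\varGamma$ is abelian, though the exact form should be checked against the conventions of \cite{APenha2}. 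So the main task is to express the matrix coefficients of the $\varGamma$-action in terms of $T_1,T_2,\zeta_1,\zeta_2$. On $\alpha_1^r\alpha_2^s$:
\begin{itemize}[leftmargin=*]
\item $\xb_1$ is multiplied by $q^{-s}$, which is the function $T_2$.
\item $\xb_2\mapsto q^{-s}(\xb_2+(r+as)\xb_1)$, so the off-diagonal coefficient is the function $T_2(\zeta_1+a\zeta_2)=T_2\zeta(a)$.
\item $z_0$ is multiplied by $q^{r}$, which is the function $T_1$.
\end{itemize}
This yields the terms $1-\alpha_1T_2$, $-\alpha_1T_2\zeta(a)$ and $1-\alpha_2T_1$, and zero for the mixed pairs. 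The care needed is with signs and with which of $\ub_1,\ub_2$ pairs with which $\xb$; the constant $1$ terms serve as a check, since they must cancel at the identity of $\varGamma$.

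\textbf{Conclusion.} Collecting the relations gives the presentation, because Proposition~\ref{prop:the-double-in-general} asserts that these relations suffice. The coalgebra structure of $D(A)$ is that of $A\ot A^{*\rm op}$, hence it is determined by \eqref{eq: Delta H} and \eqref{eq: Delta H dual}.
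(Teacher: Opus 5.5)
Your proposal is correct and follows the same route as the paper, whose proof is simply ``a direct application of Proposition~\ref{prop:the-double-in-general}''; your explicit evaluation of the three families, in particular rewriting the last term as $-\alpha\,\phi_{ij}$ with $\phi_{ij}(g)=\langle w_i, g\rightharpoonup v_j\rangle$ to get $1-\alpha_1T_2$, $-\alpha_1T_2\zeta(a)$ and $1-\alpha_2T_1$, reproduces \eqref{eq:double-relations-1} exactly. One attribution needs correcting: \eqref{eq: relations H} and \eqref{eq relations H dual-op1} are not the first two lines of \eqref{eq:relations-doble-general}, which give the cross relations as you correctly use afterwards; they are the relations of the subalgebras $\mathtt{H_p(q,a)}$ and $(\mathtt{H_p(q,a)}^*)^{\rm op}$ supplied by Lemmas~\ref{lemma:presentacion-H} and \ref{lemma:H dual-op-both}.
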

\begin{proof}
		It is a direct application of Proposition \ref{prop:the-double-in-general}.
\end{proof}


\begin{coro}\label{coro:double-pointed2}
	\(\mathtt{D_p(q,a)}\) is a pointed Hopf algebra.\end{coro}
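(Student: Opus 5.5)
The plan is to use the standard criterion that a Hopf algebra generated as an algebra by group-likes and skew-primitives is pointed. Recall \cite[Lemma 5.1.10]{Montgomery-book}: if a Hopf algebra $K$ is generated as an algebra by a subcoalgebra $C$ that is pointed, then $K$ is pointed. Equivalently, if $K$ is generated by group-likes and $(g,h)$-skew-primitives, then its coradical is contained in the span of the group-likes. So the task reduces to exhibiting a generating set of $\mathtt{D_p(q,a)}$ that spans a pointed subcoalgebra.

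By Proposition \ref{prop:double-both}, $\mathtt{D_p(q,a)}$ is generated by $\xb_i$, $z_0$, $\alpha_i$, $\ub_i$, $w_0$, $T_i$, $\zeta_i$, and the comultiplication is given by \eqref{eq: Delta H} and \eqref{eq: Delta H dual}. The elements $\alpha_i$ and $T_i$ are group-like, and $\zeta_i$ is primitive. The elements $\xb_i$, $z_0$, $\ub_1$, $w_0$ are $(g,1)$-skew-primitive with $g\in\{\alpha_1,\alpha_2,T_1,T_2\}$. Each of these spans, together with the relevant group-likes, a pointed coalgebra.

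The only generator that is not skew-primitive is $\ub_2$, since $\Delta(\ub_2)=\ub_2\ot 1+T_2\ot\ub_2+\zeta(a)T_2\ot\ub_1$. The key step is to consider the subspace
\[
C=\ku\{1,T_1,T_2,\alpha_1,\alpha_2,\zeta_1,\zeta_2,\zeta(a)T_2,T_2,\xb_1,\xb_2,z_0,\ub_1,\ub_2,w_0\}.
\]
We need it, enlarged if necessary, to be a subcoalgebra. For this we compute $\Delta(\zeta(a)T_2)=\zeta(a)T_2\ot T_2+T_2\ot\zeta(a)T_2$, so $\zeta(a)T_2$ is $(T_2,T_2)$-skew-primitive. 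Hence $C$ is a subcoalgebra spanned by group-likes, skew-primitives, and $\ub_2$. Its coradical filtration then shows $C_0=\ku\{1,T_i,\alpha_i\}$: the quotient $C/\ku G$ has coproduct landing in the terms involving group-likes, so $C$ is pointed. Concretely, $C_1$ contains all the skew-primitives, and $\ub_2\in C_2$ because $\Delta(\ub_2)\in C_0\ot C+C\ot C_1$. By Taft–Wilson, a coalgebra with $C=\bigcup C_n$ built over $\ku G$ is pointed.

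The main obstacle is making this last step rigorous. The cleanest route is to observe that any simple subcoalgebra $S$ of $C$ lies in $C_0$, where $C_0$ is the wedge-zero term defined via the coalgebra filtration generated by $\ku G$. That is, the wedge filtration $\ku G\subseteq \ku G\wedge \ku G\subseteq\cdots$ exhausts $C$, and this forces $\operatorname{corad}C\subseteq \ku G$ \cite[5.2.2]{Montgomery-book}. Applying \cite[Lemma 5.1.10]{Montgomery-book} then gives the corollary.
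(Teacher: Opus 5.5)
Your proof is correct and is essentially the paper's argument: the same generating subcoalgebra $C$ (the group-likes, the skew-primitives, the $\zeta_i$, the $(T_2,T_2)$-skew-primitive $\zeta(a)T_2$, and $\ub_2$ one level higher), a filtration of $C$ whose bottom term $\ku\{1,\alpha_i,T_i\}$ forces $C_0$ to be spanned by group-likes, and the lemma that a Hopf algebra generated as an algebra by a pointed subcoalgebra is pointed. Only your final wedge-filtration paragraph is needed, and it is the rigorous version, matching the paper's explicit coalgebra filtration $C_{\{0\}}\subseteq C_{\{1\}}\subseteq C_{\{2\}}$; the earlier appeal to Taft--Wilson and the claim $\Delta(\ub_2)\in C_0\otimes C+C\otimes C_1$ presuppose the description of $C_0$ that you are trying to establish.
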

\pf
Consider the following subsets of \(\mathtt{D_p(q,a)}\):
\begin{align*}  &C_{\{0\}}=\ku\{1,\alpha_i,T_i: i \in \I_2\}, & &  C_{\{1\}}=  C_{\{0\}} + \ku\{x_i,z_0, u_1,w_0,\zeta_i,\zeta(a)T_2: i \in \I_2\},\\  
	& C_{\{2\}}= C_{\{1\}}+\{u_2\}. \end{align*}
This is a coalgebra filtration of $C = C_{\{2\}}$.  By \cite[Proposition 4.1.2]{Radford-libro}, the coradical $C_0$ of $C$ is equal to $C_{\{0\}}$.
Since $C$ generates \(\mathtt{D_p(q,a)}\) as algebra, \(\mathtt{D_p(q,a)}_0\subseteq C_{\{0\}}\) by \cite[Corollary 5.1.12]{Radford-libro}. 
Therefore, \(\mathtt{D_p(q,a)}_0= C_{\{0\}}\) and particularly \(\mathtt{D_p(q,a)}\) is pointed. \epf 


\section{Drinfeld doubles of Laistrygonian algebras} \label{sec:double-infinite}
 We introduce an infinite-dimensional Hopf algebra $\mathcal{DL}_p(q, a)$
	covering $\mathtt{D_p(q,a)}$. For this, we define a skew-pairing between two Hopf algebras covering $\mathtt{H_p(q,a)}$ and $(\mathtt{H_p(q,a)}^*)^{\operatorname{op}}$; see \S \ref{subsec:double}. From Lemma \ref{lemma:presentacion-H}, it is clear that  $H_p(q,a)$ described in  \eqref{eq:def-H} covers $\mathtt{H_p(q,a)}$.
	
	Let $\cA \coloneqq U(\mathfrak t)\ot \ku\Z^2$, where $U(\mathfrak t) \simeq S(\mathfrak t)$ 
	is the enveloping algebra of the  $2$-dimensional abelian Lie algebra $\mathfrak t$. 
	The braided  Hopf algebra $\cL_p^*(q,a)$ presented in Definition \ref{def:les-dual} belongs to $\yd{\cA}$ with structure induced by \eqref{eq:struct-yd-dual}. More precisely, assume that $\mathfrak t$ is generated by $\zeta_1, \zeta_2$. The left Yetter-Drinfeld module structure on  $\cL_p^*(q,a)$ is given by \eqref{eq:struct-yd-dual}. Set the bosonization
	\begin{align}\label{alg-aux-cover}
		K_p(q,a) \coloneqq \cL_p^*(q,a)\# \cA.
	\end{align}
 Clearly, $K_p(q,a)^{\operatorname{op}}$ covers $(\mathtt{H_p(q,a)}^*)^{\operatorname{op}}$. In fact, the algebra $K_p(q,a)^{\operatorname{op}}$ has a similar presentation  as  $(\mathtt{H_p(q,a)}^*)^{\operatorname{op}}$ given  in Lemma \ref{lemma:H dual-op-both}, 
	satisfying  the relations \eqref{eq: relation-lestrygon-dual1}, \eqref{eq relations H dual-op1}, \eqref{eq:double-relations-1}  
	 but omitting the set of relations \eqref{eq:trunc-zetai-Ti} and  \eqref{eq:trunc-Lestrygon-dual-char-p-arbitraria}.

	\begin{lemma}\label{lemma:skew-pairing-def-H-K-op}
		The formulas \eqref{eq:pairing-G-HGdual-ders}, \eqref{eq:pairing-G-HGdual-chars} and \eqref{eq:pairing-H-Hdual}
		induce  a skew-pairing between the Hopf algebras $H_p(q,a)$   and $K_p(q,a)^{\operatorname{op}}$.
	\end{lemma}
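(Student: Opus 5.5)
We define $\tau\colon H_p(q,a)\ot K_p(q,a)^{\operatorname{op}}\to\ku$ in stages, using the conditions \eqref{eq:skew-pairing} to extend from generators, and check that every defining relation on each side lies in the appropriate radical.

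\emph{Step 1: a bilinear form on the free algebras.} Let $F_H$ be the free algebra on $\xb_1,\xb_2,z_0,\alpha_1^{\pm1},\alpha_2^{\pm1}$ and $F_K$ the free algebra on $\ub_1,\ub_2,w_0,\zeta_1,\zeta_2,T_1^{\pm1},T_2^{\pm1}$. Give them the coalgebra structures \eqref{eq: Delta H} and \eqref{eq: Delta H dual}, so they are bialgebras, with $F_K$ multiplied in the opposite way. A form on generators given by \eqref{eq:pairing-G-HGdual-ders}, \eqref{eq:pairing-G-HGdual-chars}, \eqref{eq:pairing-H-Hdual} extends uniquely to a bialgebra skew-pairing $F_H\ot F_K\to\ku$ satisfying \eqref{eq:skew-pairing}. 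All pairings not listed are set to $0$, except that group-likes pair with group-likes by the characters and $\tau(g,1)=1$. This is the standard construction: multiplicativity in the first variable is forced by the coproduct of the second, and conversely.

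\emph{Step 2: descend to the quotients.} To show $\tau$ descends to $H_p(q,a)\ot K_p(q,a)^{\operatorname{op}}$, we check that the ideal $I_H$ of relations of $H_p(q,a)$ lies in the left radical, and likewise for $I_K$. It suffices to check this on the generating relations paired against generators of the other side, provided the ideal generated by the relations is also a coideal; then the radical condition propagates by multiplicativity. Since $H_p(q,a)$ and $K_p(q,a)$ are Hopf algebras, $I_H$ and $I_K$ are Hopf ideals, so this applies.

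The relations to check fall into three groups.
\begin{itemize}
\item Group-like and torus relations: $\alpha_1\alpha_2=\alpha_2\alpha_1$, and $\zeta_i$, $T_i$ commuting. These are immediate, since the pairing of $\ku\Z^2$ with $\cA$ is the usual Hopf pairing of a group with characters and derivations.
\item Bosonization relations \eqref{eq: relations H} and \eqref{eq relations H dual-op1}: pairing both sides against a generator reduces to comparing, for example, $\tau(\alpha_1\xb_2,\ub_1)$ with $\tau((\xb_2+\xb_1)\alpha_1,\ub_1)$. We expand using $\Delta(\ub_1)=\ub_1\ot 1+T_2\ot\ub_1$, or $\Delta(\ub_2)$, which carries the extra term $\zeta(a)T_2\ot\ub_1$. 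That extra term is exactly what produces the $a\,\delta_{2,i}\xb_1$ in $\alpha_2\xb_2$. These checks are short case computations.
\item Nichols-type relations \eqref{eq: relation-lestrygon-geral-1} and \eqref{eq: relation-lestrygon-dual1}: a homogeneous relation of degree $d\ge2$ in the $\xb$'s pairs trivially with every generator of $F_K$ by degree considerations. The only exception is pairing with group-likes and $\zeta$'s, where it gives $\varepsilon(\text{relation})\cdot(\ldots)=0$ or a derivation of a product of skew-primitives, which again vanishes.
\end{itemize}

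\emph{Main obstacle.} The delicate point is the descent argument in Step 2. Degree considerations alone only show the relations lie in the radical against generators. To conclude they lie in the radical against everything, we use the multiplicativity property above together with the fact that the relation spaces are coideals modulo the ideal.

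A cleaner route for the Nichols parts is to invoke the known fact that the relations generate the kernel of the quantum symmetrizer in characteristic $0$. In positive characteristic we instead verify directly that each relation $r$ is primitive-like modulo lower relations. That is, $\Delta(r)\in r\ot H+F_H\ot r$ plus terms involving earlier relations, which we would confirm from the formulas in Appendix~\ref{sec:formulas}. Then $\tau(r,b b')=\tau(r_{(1)},b)\tau(r_{(2)},b')$ vanishes by induction on the length of $b b'$.

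The relations $z_{\ell+1}=0$ and $w_{\ell+1}=0$ are the hardest case, since they have high degree. For these we would use that $z_{\ell+1}$ is $(\alpha_1^{\ell+1}\alpha_2,1)$-skew-primitive in the tensor algebra modulo lower relations, which is precisely the normalization $-2a=\ghost$. We then check $\tau(z_{\ell+1},\cdot)=0$ on degree-$(\ell+2)$ monomials via the braided derivations dual to $\ub_i$.

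Finally, the pairing is a Hopf skew-pairing because antipodes are automatically compatible for skew-pairings between Hopf algebras.
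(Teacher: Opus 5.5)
Your proposal follows the same approach as the paper's proof. Both define $\tau$ on generators by \eqref{eq:pairing-G-HGdual-ders}, \eqref{eq:pairing-G-HGdual-chars} and \eqref{eq:pairing-H-Hdual}, extend it through the two multiplicativity rules of a skew-pairing (equivalently, build an algebra map into $(K_p(q,a)^{\operatorname{op}})^\circ$), and check that the defining relations on each side pair to zero. Your explicit use of the coideal property together with the degree grading to pass from generators to all elements is a step the paper's sketch leaves implicit. One small point to tighten: to go from a generating relation $r$ to arbitrary elements $h r h'$ of the ideal, you must pair $r$ against the whole subcoalgebra generated by the generators, not only against the generators themselves (for example $\zeta(a)T_2$, which appears in $\Delta(u_2)$); your degree argument already handles this.
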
 
\begin{proof}
We sketch a proof. Let $H=H_p(q,a)$ and $K^{\operatorname{op}}=K_p(q,a)^{\operatorname{op}}$. Giving a skew-pairing between $H$ and $K^{\operatorname{op}}$ is equivalent to give a Hopf algebra homomorphism $\tau: H \longrightarrow \left(K^{\operatorname{op}}\right)^\circ$. For each  generator $a$ of $H$ and each generator  $b$ of  $K^{\operatorname{op}}$ let $\tau(a)(b):=\langle a,b\rangle$ as in \eqref{eq:pairing-G-HGdual-ders}, \eqref{eq:pairing-G-HGdual-chars} and \eqref{eq:pairing-H-Hdual}, and extend $\tau(a)$ linearly to a map on $K^{\operatorname{op}}$ by the rule 
\begin{align}\label{eqn: tau multiplicative}
	\tau(a)(b\,  \tilde{b})= \tau(a_{(1)})(\tilde{b})\tau(a_{(2)})(b),\qquad b,\tilde{b}\in K^{\operatorname{op}}.
\end{align}
To prove that $\tau(a)$ is a well-defined  map on $K^{\operatorname{op}}$, we need to check that, for each relation $\mathbf{r}$ of $K^{\operatorname{op}}$, we have  $\tau(a)(\mathbf{r})=0$. By  \eqref{eqn: tau multiplicative}, $m^*(\tau(a))=\tau(a_{(2)})\ot \tau(a_{(1)})$, hence $\tau(a)\in \left( K^{\operatorname{op}}\right)^\circ$ for each generator $a$ of $H$.
Now we have to extend the map $\tau$ to $H^{\operatorname{cop}}$. We do this by the rule		
\begin{align}\label{eqn: tau multiplicative2}
\tau(a \tilde{a})(k)= \tau(a)(k_{(1)})\tau(\tilde{a})(k_{(2)}),\quad a,\tilde{a}\in H,\, \,k\in K^{\operatorname{op}}.
\end{align}
Again, to check that $\tau$ is a well-defined map on $H^{\operatorname{cop}}$, it is enough to check that for every relation $\mathbf{s}$ of $H$, $\tau(\mathbf{s})(b)=0$. By construction, the resulting map \(\tau\) is a skew-pairing.
\end{proof}	

As explained in \S \ref{subsec:double}, we  consider 
\begin{align}\label{alg-cover-positive}
\mathcal{DL}_p(q, a):=H_{p}(q,a)\bowtie_{\tau}K_p(q,a)^{\operatorname{op}},
\end{align}
where $\tau$ is the skew-pairing given in the proof of Lemma \ref{lemma:skew-pairing-def-H-K-op}. As the following proposition shows, $\mathcal{DL}_p(q,a)$ admits a presentation analogous to that in Proposition \ref{prop:double-both} with the truncation relations \eqref{eq:trunc-Lestrygon-char-p-arbitraria}, \eqref{eq: alphaNp}, \eqref{eq:trunc-zetai-Ti} and \eqref{eq:trunc-Lestrygon-dual-char-p-arbitraria} removed.

\begin{prop}\label{prop:Dtilde} The following assertions hold.
\begin{enumerate}[leftmargin=*,label=\rm{(\roman*)}]
		\item\label{prop:Dtilde-uno} The algebra $\mathcal{DL}_p(q, a)$ is generated by $\xb_i, z_0, \alpha_i, \ub_i, w_0,  T_i, 
		\zeta_i$, $i \in \I_2$,
	subject to the relations   \eqref{eq: relation-lestrygon-geral-1},  \eqref{eq: relations H}, \eqref{eq: relation-lestrygon-dual1},  \eqref{eq relations H dual-op1} and
	\eqref{eq:double-relations-1}. The comultiplication is determined by those of $H_p(q,a)$  and $K_p(q,a)^{\operatorname{op}}$, given in \eqref{eq: Delta H} and \eqref{eq: Delta H dual}.

\smallbreak
\item\label{prop:Dtilde-dos} The monomials 
\begin{align}\label{eq:pbw-Dtilde}
	z_0^{n_0}\dots z_{\ell}^{n_{\ell}}\xb_2^{r_2}\xb_1^{r_1}\xb_{21}^{r_{21}}\alpha_1^{k_1} \alpha_2^{k_2} T_1^{k_3} T_2^{k_4} \zeta_1^{q_1} \zeta_2^{q_2}  \ub_{21}^{s_{21}}\ub_1^{s_1} \ub_2^{s_2} w_0^{m_0}\dots w_{\ell}^{m_{\ell  }},
\end{align}
form a PBW-basis of $\mathcal{DL}_p(q, a)$, where $r_1,s_1\in \I_{0,1}$, $k_i\in\Z$ for every $i$, and all other exponents belong to $\N_0$. In particular, $\GK\mathcal{DL}_p(q, a) = 2(\ell + 6)$. 
\smallbreak\item\label{prop:Dtilde-tres} There exists a Hopf algebra epimorphism $\mathcal{DL}_p(q,a)\twoheadrightarrow \mathtt{D_p(q,a)}$. 
\end{enumerate}
\end{prop}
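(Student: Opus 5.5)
The plan follows the three items in order, using the general description of the double $A\bowtie_\tau B=(A\ot B)_\sigma$ from \S\ref{subsec:double}.

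For \ref{prop:Dtilde-uno}, recall that for a skew-pairing $\tau$, the twisted Hopf algebra $(A\ot B)_\sigma$ contains $A\simeq A\ot 1$ and $B\simeq 1\ot B$ as Hopf subalgebras. It is spanned by products $ab$, and the only new relations are the cross relations
\[
b\,a=\tau(a_{(1)}\ot b_{(1)})\,a_{(2)}b_{(2)}\,\tau^{-1}(a_{(3)}\ot b_{(3)}),
\]
where $\tau^{-1}(a\ot b)=\tau(\Ss(a)\ot b)$. Hence a presentation of $H_p(q,a)\bowtie_\tau K_p(q,a)^{\operatorname{op}}$ is obtained in three steps. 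First, take the presentation of $H_p(q,a)$ from Lemma~\ref{lemma:presentacion-H}(i). Second, take the presentation of $K_p(q,a)^{\operatorname{op}}$, namely that of Lemma~\ref{lemma:H dual-op-both} without the truncations. Third, impose the cross relations only for pairs of generators; by multiplicativity of $\tau$ these imply all the others.

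The cross relations for generators are evaluated from the pairings \eqref{eq:pairing-G-HGdual-ders}, \eqref{eq:pairing-G-HGdual-chars}, \eqref{eq:pairing-H-Hdual} and the coproducts \eqref{eq: Delta H}, \eqref{eq: Delta H dual}. This is exactly the same local computation as in Proposition~\ref{prop:double-both}, since only generators of degree $\le 1$ in the Nichols parts are involved. So they reproduce \eqref{eq:double-relations-1}, and the comultiplication is that of the tensor product.

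For \ref{prop:Dtilde-dos}, the multiplication map $H_p(q,a)\ot K_p(q,a)^{\operatorname{op}}\to\DLstr_p(q,a)$ is a linear isomorphism, since $(A\ot B)_\sigma=A\ot B$ as vector spaces. So it suffices to exhibit PBW bases of the factors. For $H_p(q,a)=\cL_p(q,a)\#\ku\Z^2$, the monomials $z_0^{n_0}\cdots z_\ell^{n_\ell}x_2^{r_2}x_1^{r_1}x_{21}^{r_{21}}\alpha_1^{k_1}\alpha_2^{k_2}$ form a basis, with $r_1\in\I_{0,1}$ because $x_1^2=-2x_{21}$.
\begin{itemize}
\item For $p\ne 2$, this is the PBW basis of \cite{AAH-triang,AAH-findim}, valid before truncation.
\item For $p=2$ it must be checked separately, by a Diamond Lemma argument on the relations \eqref{lestrygon-char2} together with $x_{21}x_2=(x_2+x_1)x_{21}$. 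The expected outcome is that $x_{21}$ and the $z_n$ are $q$-commuting or skew-commuting in a filtered sense; see the formulas in the Appendix.
\end{itemize}
Via Lemma~\ref{lema: isomorfismo-V-inverse}, the same holds for $\cL^*_p(q,a)$, giving the $u$-part, and $\cA$ contributes $T_1^{k_3}T_2^{k_4}\zeta_1^{q_1}\zeta_2^{q_2}$. Reordering the factors to the stated order uses the cross relations, which are triangular with respect to the degree filtration. Hence the stated monomials span, and they are independent because the associated graded is the tensor product.

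For the GK-dimension, each factor has a PBW basis with polynomial growth. In $H_p(q,a)$ these are the variables $z_0,\dots,z_\ell,x_2,x_{21}$ together with two Laurent variables, giving $\ell+5$. The $K$ side gives $\ell+5+2$, since the $\zeta_i$ add two. I would compute this as the growth of a filtered algebra whose associated graded is a $q$-skew polynomial/Laurent algebra, which yields the count as the number of free (non-nilpotent) generators. Counting carefully, the total should come out to $2(\ell+6)$; the precise bookkeeping of $\zeta$'s versus $x_1$ needs care.

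For \ref{prop:Dtilde-tres}, compare presentations. Proposition~\ref{prop:double-both} has the same generators and relations as item \ref{prop:Dtilde-uno}, plus the truncations \eqref{eq:trunc-Lestrygon-char-p-arbitraria}, \eqref{eq: alphaNp}, \eqref{eq:trunc-zetai-Ti}, \eqref{eq:trunc-Lestrygon-dual-char-p-arbitraria}. So the identity on generators defines a surjective algebra map. It is a coalgebra map because the coproducts on generators coincide, and hence it is a Hopf epimorphism.

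The main obstacle is the PBW statement in \ref{prop:Dtilde-dos} for $p=2$, that is, linear independence in the untruncated $\cL_2(q,a)$. I would handle it by the Diamond Lemma with the ordering $z_0<\dots<z_\ell<x_2<x_1<x_{21}$, resolving the overlap ambiguities using the Appendix formulas.
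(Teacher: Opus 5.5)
Your outline has the same architecture as the paper's (very terse) proof. The relations hold in $\DLstr_p(q,a)$, and the cross relations come from the same local computation as in Proposition~\ref{prop:double-both}. Linear independence comes from $\DLstr_p(q,a)=H_p(q,a)\ot K_p(q,a)^{\operatorname{op}}$ as vector spaces together with PBW bases of the two factors. Item (iii) follows by comparing presentations. Two small remarks:
\begin{itemize}
\item No reordering through cross relations is needed. In \eqref{eq:pbw-Dtilde} every letter of $H_p(q,a)$ already precedes every letter of $K_p(q,a)^{\operatorname{op}}$, and $(h\ot 1)(1\ot k)=h\ot k$ in the twisted product.
\item Your count $(\ell+5)+(\ell+7)$ already equals $2(\ell+6)$, so no further bookkeeping is required.
\end{itemize}

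The genuine gap is in the step you flagged, and it is not specific to $p=2$. With the relations \eqref{eq: relation-lestrygon-geral-1} taken literally, the proposed monomials do not even span $\cL_p(q,a)$. So neither a citation nor a Diamond Lemma run on \eqref{lestrygon-char2} can produce the PBW basis. To see this, grade the free algebra by $\deg\xb_1=\deg\xb_2=(1,0)$ and $\deg z_0=(0,1)$. All relations in \eqref{eq: relation-lestrygon-geral-1} are homogeneous. Since $\ell\geq 1$, the only one of degree at most $(1,2)$ is $\xb_1z_0-qz_0\xb_1$. Hence the ideal in degree $(1,2)$ is spanned by $z_0(\xb_1z_0-qz_0\xb_1)$ and $(\xb_1z_0-qz_0\xb_1)z_0$, which contain no $\xb_2$. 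Therefore $\xb_2z_0^2$, $z_0\xb_2z_0$ and $z_0^2\xb_2$ stay linearly independent. The only PBW candidates in their span are $z_0^2\xb_2$ and $z_0z_1=z_0\xb_2z_0-qz_0^2\xb_2$. Thus $z_1z_0=\xb_2z_0^2-qz_0\xb_2z_0$ cannot be straightened: your ordering $z_0<\dots<z_\ell$ has no reduction rule for $z_mz_n$ with $m>n$.

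What is missing is a family of $q$-commutation relations among the $z_n$. For $p\neq2$ these are $z_mz_n=q^{m-n}z_nz_m$, as recorded in Lemma~\ref{lemma:relations1}. They hold in the Nichols algebra, and the PBW bases available for $p\neq 2$ are for presentations that contain such relations. The analogous relations among the $w_n$ are likewise missing from \eqref{eq: relation-lestrygon-dual1}. These relations have to be added to the definitions before any PBW argument. The paper's proof relies tacitly on the same relations (\emph{using the commutation relations}) and likewise does not carry out the $p=2$ check. Once they are included, your route, with the Diamond Lemma verification for $p=2$, is the right way to complete it.
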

\begin{proof}
We outline the argument which is similar to that of \cite[Proposition 2.3]{APenha}. \ref{prop:Dtilde-uno} follows from the definition.
For \ref{prop:Dtilde-dos}, let $A$ denote the algebra presented by the generators and relations in \ref{prop:Dtilde-uno}.  By construction, the relations are valid in $\mathcal{DL}_p(q, a)$ and thus we have an epimorphism	$A\twoheadrightarrow \mathcal{DL}_p(q, a)$. Using the commutation relations we see that $A$ is linearly generated by \eqref{eq:pbw-Dtilde} and then follows from \ref{prop:Dtilde-uno} that $A\simeq\mathcal{DL}_p(q, a)$.
Notice that	
	\ref{prop:Dtilde-tres} follows from \ref{prop:Dtilde-dos} since the relations in $\mathcal{DL}_p(q,a)$ are also satisfied in $\mathtt{D_p(q,a)}$. 
\end{proof}

\section{Extensions}\label{sec:extensions}
For the rest of the paper, we assume  that $q=1$ which implies $N=1$ and $T_i=1$, $i\in \I_2$. Let $M$ be the subalgebra of $\DLstr_p(1, a)$ generated by $x_{21}^p$, $u_{21}^p$, $x_i^{[p]}$, $\alpha_i^p$, $\ub_i^{[p]}$, $z_n^{p}$, $w_n^p$, $\zeta_i^{[p]}$, where $\zeta_i^{[p]} = \zeta_i^{p}- \zeta_i,\, u_i^{[p]}=u_i^{(1+\delta_{i,p})p}$ and $x_i^{[p]}=x_i^{(1+\delta_{i,p})p}$, $i\in\I_2$, $n\in \I_{\ell}$. It follows from Lemmas \ref{lemma:relations1}, \ref{lemma:relations2}, \ref{lemma:relations3}\, and
	\ref{lemma:comultiplication-powers-Dtilde} that $M$ is a normal commutative Hopf subalgebra of $\DLstr_p(1, a)$.  As an algebra, $M$ is isomorphic to the quotient of the polynomial algebra
	\[\ku [\mathtt{x}_i,\mathtt{u}_i,\mathtt{\xi}_i,\mathtt{a}_i^{\pm},\mathtt{z}_j,\mathtt{w}_j,\mathtt{y}_0,\mathtt{v}_0\mid {i\in \I_{3},\,  j\in \I_{0,\ell}}] \]
		 by the ideal generated by $\langle\mathtt{x}^2_1+2\mathtt{y}_{0}, \mathtt{u}^2_1+ 2\mathtt{v}_{0}\rangle$, via the identification
	\begin{align*}
		&x_i^{[p]}\mapsto	\mathtt{x}_i,& & \alpha_i^p \mapsto	\mathtt{a}_i, & & z_j^p\mapsto	\mathtt{z}_j, & & x_{21}^p\mapsto	\mathtt{y}_{0},\\
		&u_i^{[p]}\mapsto	\mathtt{u}_i,& & \zeta_i^{[p]}\mapsto	\mathtt{\xi}_i, & & w_j^p\mapsto	\mathtt{w}_j, & &u_{21}^p\mapsto	\mathtt{v}_{0}.
	\end{align*}
Notice that $M$ is a Noetherian domain and $M$ is isomorphic to $\mathcal{O}(\Gb)$ as a Hopf algebra, for the algebraic group $\Gb=\Hom_{\operatorname{Alg}}(M,\ku) $. \vspace{.1cm}

\subsection{An exact sequence}	
We recall from \cite{A-Devoto} and \cite{Sch} that a short
	exact sequence of Hopf algebras is a  sequence of Hopf algebra  morphisms
	$A\xhookrightarrow[]{\iota} C \xrightarrowdbl[]{\pi} B$ satisfying
	\begin{multicols}{2}
		\begin{enumerate}[leftmargin=*,label=\rm{(\roman*)}]
			\item\label{suc-exacta-1} $\iota$ is injective.
			\item\label{suc-exacta-2} $\pi$ is surjective.
			\item\label{suc-exacta-3} $\ker\pi = C\, (\iota A)^+$.
			\item\label{suc-exacta-4} $\iota(A) = C^{\operatorname{co} \pi}$.
		\end{enumerate}
	\end{multicols}

	\begin{remark}\label{remark-exact-sequence-hopf}
		If $A\xhookrightarrow[]{\iota} C$ is faithfully flat and $\iota(A)$ is stable under the left adjoint action of $C$ then
		\ref{suc-exacta-1}, \ref{suc-exacta-2} and \ref{suc-exacta-3} imply \ref{suc-exacta-4}, see  \cite[Corollary 1.2.5]{A-Devoto}.
	\end{remark}

	Let   $\mathbf{R}$ be the Hopf subalgebra of $\mathtt{D_p(1,a)}$ generated by $ \xb_1,\ub_1, \xb_{21}, \ub_{21}, \alpha_1,\alpha_2$. Since $q=1$, $\mathbf{R}$ is commutative. Our first goal is to describe  $\mathbf{R}$ as a quotient of the algebra of functions of an algebraic group.  Consider the affine group schemes
\[{\footnotesize \mathbb{\tilde{G}}_1=\left\lbrace{\tiny  \begin{pmatrix}
1&0&0\\
t_1&\lambda&0\\
t_2&- t_1 \lambda&\lambda^2
\end{pmatrix} } : \begin{array}{l}
t_1^2=-2t_2, \\ \lambda\neq 0.
\end{array}\right\rbrace,
\quad \mathbb{\tilde{G}}_2=\left\lbrace{\tiny  \begin{pmatrix}
1&0&0\\
s_1&1&0\\
s_2&-s_1&1
\end{pmatrix} } : \begin{array}{l} s_1^2=-2 s_2 \end{array} \right \rbrace   }. \]
Let $\mathbf{B} := \mathbb{\tilde{G}}_1\times \mathbb{\tilde{G}}_2\times \Gb_m$.  Notice that if $p\neq 2$, then $\mathcal O(\mathbf B)	\simeq
\Bbbk[t_1,s_1,\lambda^{\pm1},\mu^{\pm1}]$ and hence $\mathbf B$ is reduced. Assume that $p=2$. In this case, $t_1^2=s_1^2=0$ and consequently $\mathbf B$ is a nonreduced affine group scheme. The
nilradical of $\mathcal O(\mathbf B)$  is generated by $t_1$ and $s_1$. The reduced closed subgroup scheme of $\mathbf B$ is
\[
\mathbf B_{\mathrm{red}}
=
(\widetilde G_1)_{\mathrm{red}}
\times
(\widetilde G_2)_{\mathrm{red}}
\times\mathbb G_m,
\]
where
\[
(\widetilde G_1)_{\mathrm{red}}
=
\left\lbrace{\tiny 
\begin{pmatrix}
	1&0&0\\
	0&\lambda&0\\
	t_2&0&\lambda^2
\end{pmatrix}}
:\lambda\neq0,\,t_2\in \Bbbk
\right\rbrace ,
\,\,\,\,
(\widetilde G_2)_{\mathrm{red}}
=
\left\lbrace{\tiny 
\begin{pmatrix}
	1&0&0\\
	0&1&0\\
	s_2&0&1
\end{pmatrix} }: s_2\in \Bbbk
\right\rbrace .
\]
Consequently, $\mathcal O(\mathbf B_{\mathrm{red}})\simeq \Bbbk[t_2,s_2,\lambda^{\pm1},\mu^{\pm1}]$.

\begin{prop} The following assertions hold:
\begin{enumerate}[leftmargin=*,label=\rm{(\roman*)}]
		\item  $\mathcal{O}(\mathbf{B}) \simeq \ku[\mathtt{x}_1,\mathtt{y}_0,\mathtt{a}_1^{\pm},\mathtt{u}_1,\mathtt{v}_0,\mathtt{a}_2^{\pm}]/\langle\mathtt{x}_1^2+2\mathtt{y}_0,\mathtt{u}_1^2+2\mathtt{v}_0\rangle$ as algebras, \vspace{.1cm}

		\item There exists an exact sequence of Hopf algebras
				\begin{align*}
			\xymatrix{ \cO(\mathbf{B}_{\rm{red}})  \ar@{^{(}->}[rr] ^{\operatorname{Fr}} & & \cO(\mathbf{B}) \ar@{->>}[rr] ^{\phi} & & \nucleo },
		\end{align*}
		where $\operatorname{Fr}$ is the Frobenius morphism.
	\end{enumerate}
\end{prop}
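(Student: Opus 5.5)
For part (i), the plan is to compute the coordinate ring of each factor directly from the matrix description. The coordinate ring of $\mathbb{\tilde G}_1$ is $\ku[t_1,t_2,\lambda^{\pm1}]/\langle t_1^2+2t_2\rangle$. The coordinate ring of $\mathbb{\tilde G}_2$ is $\ku[s_1,s_2]/\langle s_1^2+2s_2\rangle$, and that of $\Gb_m$ is $\ku[\mu^{\pm1}]$. Tensoring these gives $\cO(\mathbf B)$. The algebra isomorphism is then the assignment
\[
t_1\mapsto \mathtt x_1,\quad t_2\mapsto \mathtt y_0,\quad \lambda\mapsto \mathtt a_1,\quad s_1\mapsto\mathtt u_1,\quad s_2\mapsto \mathtt v_0,\quad \mu\mapsto \mathtt a_2 .
\]
Before relying on this, I will check that the matrix sets are closed under multiplication, so that the group scheme structure is well defined. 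For $\mathbb{\tilde G}_1$, the $(2,1)$-entry of a product is $t_1'+\lambda' t_1$ and the $(3,1)$-entry is $t_2'-t_1'\lambda' t_1+\lambda'^2t_2$. Using $t_1^2=-2t_2$, the square of the first entry equals $-2$ times the second, so the relation is preserved. The case of $\mathbb{\tilde G}_2$ is the same with $\lambda=1$. This also yields the comultiplication on generators: $\Delta(t_1)=t_1\ot\lambda+1\ot t_1$ (up to the side convention), $\Delta(\lambda)=\lambda\ot\lambda$, and similarly for $s_1$. It is consistent with $\alpha_1$ grouplike and $\xb_1\in\Pc_{\alpha_1,1}$.

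For part (ii), I will first define $\phi$ on generators by
\[
t_1\mapsto \xb_1,\quad t_2\mapsto \xb_{21},\quad \lambda\mapsto\alpha_1,\quad s_1\mapsto \ub_1,\quad s_2\mapsto \ub_{21},\quad \mu\mapsto\alpha_2 .
\]
The assignment $\lambda\mapsto\alpha_1$ may need an inverse or a sign adjustment depending on conventions. Since $\mathbf R$ is commutative (because $q=1$), $\phi$ is well defined as an algebra map once $\xb_1^2=-2\xb_{21}$ and $\ub_1^2=-2\ub_{21}$ hold. These follow from \eqref{eq: relation-lestrygon-geral-1} and \eqref{eq: relation-lestrygon-dual1}. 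The map is surjective by the definition of $\mathbf R$.

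Next I will check that $\phi$ is a Hopf map by comparing comultiplications. For $\xb_{21}$ and $\ub_{21}$ I will use the formulas collected in the Appendix (Lemma \ref{lemma:comultiplication-powers-Dtilde} and its neighbours). The expected form is $\Delta(\xb_{21})=\xb_{21}\ot\alpha_1^2+1\ot\xb_{21}-\xb_1\alpha_1\ot\xb_1$ (or its analogue), matching the $(3,1)$-entry computation. The Frobenius map $\operatorname{Fr}$ sends $t_2\mapsto t_2^p$, $s_2\mapsto s_2^p$, $\lambda\mapsto\lambda^p$ and $\mu\mapsto\mu^p$. It is the $p$-th power map on the reduced coordinates, hence a Hopf map, and it is injective because it lands in a polynomial ring in $p$-th powers.

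For exactness I will use Remark \ref{remark-exact-sequence-hopf}. Since $\cO(\mathbf B)$ is commutative, normality of the image is automatic. Faithful flatness follows because $\cO(\mathbf B)$ is free over the image of $\operatorname{Fr}$. A basis is given by the monomials $t_1^{r}t_2^{i}s_1^{r'}s_2^{j}\lambda^k\mu^m$ with $r,r'\in\I_{0,1}$ and $i,j,k,m\in\I_{0,p-1}$. When $p\neq2$, one uses $t_1$ in place of $t_2$, with $t_1^p$ and $t_1$ replacing $t_2^p$ and $t_2$ up to the relation. This uniform description needs care, and I would handle $p=2$ and $p$ odd separately at this point.

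It then remains to show $\ker\phi=\cO(\mathbf B)\operatorname{Fr}(\cO(\mathbf B_{\rm red}))^+$. The inclusion $\supseteq$ follows from the truncations $\xb_{21}^p=0$, $\ub_{21}^p=0$ and $\alpha_i^p=1$ in $\mathtt{D_p(1,a)}$. For equality I will compare dimensions. The quotient $\cO(\mathbf B)/\langle \operatorname{Fr}(\cdot)^+\rangle$ has dimension $4p^4$ in all characteristics. Indeed, for $p$ odd it equals $\ku[t_1,s_1,\lambda,\mu]/(t_1^{2p},s_1^{2p},\lambda^p-1,\mu^p-1)$, of dimension $(2p)^2p^2$, since $t_2^p$ is a multiple of $t_1^{2p}$.

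The main obstacle is showing $\dim\mathbf R\ge 4p^4$. Here I would use the PBW basis of $\mathtt{D_p(1,a)}$, obtained from Proposition \ref{prop:Dtilde}\ref{prop:Dtilde-dos} together with the truncations and the triangular decomposition of Proposition \ref{prop:the-double-in-general}. The goal is to show that the monomials $\xb_1^{r_1}\xb_{21}^{r_{21}}\alpha_1^{k_1}\alpha_2^{k_2}\ub_{21}^{s_{21}}\ub_1^{s_1}$ with the corresponding ranges are linearly independent. For $p$ odd, $\xb_1$ has nilpotency index $2p$, not $p$. I will first need to reconcile this with the truncation $\xb_1^p\in J_p$. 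If $\xb_1^p=0$, the count changes and $\cO(\mathbf B_{\rm red})$ should be read with generator $t_1$, so I will recheck which Frobenius kernel gives the matching dimension. This case-by-case dimension check is the step most likely to require adjustment.
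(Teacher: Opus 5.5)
Your framework differs from the paper's at the last step. You define $\phi$ on generators, get the hypotheses of Remark \ref{remark-exact-sequence-hopf} from commutativity and freeness of finite rank, and then prove $\ker\phi=\cO(\mathbf B)\operatorname{Fr}(\cO(\mathbf B_{\rm red}))^+$ by an easy inclusion plus a dimension count; the paper instead checks the kernel directly on the spanning set $\mathtt S$. Your route is fine, and for $p=2$ it works as you describe (rank $2^6$, $\dim\nucleo=64$).

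For odd $p$, however, the argument as written breaks. The inconsistency you flag at the end is not bookkeeping: you have taken the wrong $\mathbf B_{\rm red}$. For $p\neq 2$ the relation $t_1^2=-2t_2$ eliminates $t_2$, so $\cO(\mathbf B)\simeq\ku[t_1,s_1,\lambda^{\pm1},\mu^{\pm1}]$ is already reduced and $\mathbf B_{\rm red}=\mathbf B$. The coordinates $t_2,s_2,\lambda,\mu$ describe $\mathbf B_{\rm red}$ only when $p=2$. Using $t_2\mapsto t_2^p$ for odd $p$ causes two failures:
\begin{itemize}
\item The map is not a coalgebra map: $\Delta(t_2^p)=t_2^p\ot 1-t_1^p\lambda^p\ot t_1^p+\lambda^{2p}\ot t_2^p$, and $t_1^p\notin\ku[t_2^p,s_2^p,\lambda^{\pm p},\mu^{\pm p}]$.
\item The quotient you compute has dimension $4p^4$, whereas $\dim\nucleo\le p^4$. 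Indeed $\xb_{21}=-\xb_1^2/2$ and $\ub_{21}=-\ub_1^2/2$, and $\xb_1^p$, $\ub_1^p$, $\alpha_i^p-1$ all vanish in $\mathtt{D_p(1,a)}$.
\end{itemize}
So the bound $\dim\nucleo\ge 4p^4$ that you single out as the main obstacle is false. With that choice $\ker\phi$ would strictly contain the ideal, and the sequence would not be exact.

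For odd $p$, take instead $\operatorname{Fr}\colon\cO(\mathbf B)\to\cO(\mathbf B)$ given by $t_1\mapsto t_1^p$, $s_1\mapsto s_1^p$, $\lambda\mapsto\lambda^p$, $\mu\mapsto\mu^p$.
\begin{itemize}
\item It is a Hopf map, because $\cO(\mathbf B)$ is commutative and $\Delta(t_1)=t_1\ot 1+\lambda\ot t_1$ and $\Delta(s_1)=s_1\ot 1+1\ot s_1$ have coefficients in $\mathbb F_p$.
\item $\cO(\mathbf B)$ is free over its image with basis $t_1^is_1^j\lambda^k\mu^m$, $i,j,k,m\in\I_{0,p-1}$.
\item The quotient $\cO(\mathbf B)/\cO(\mathbf B)\operatorname{Fr}(\cO(\mathbf B))^+\simeq\ku[t_1,s_1,\lambda,\mu]/(t_1^p,s_1^p,\lambda^p-1,\mu^p-1)$ has dimension $p^4$.
\item The inclusion into $\ker\phi$ holds since $\phi(t_1^p)=\xb_1^p=0$.
\item For the reverse bound, use the triangular decomposition of Proposition \ref{prop:the-double-in-general}, with bases of $\lstr_p(1,a)$ and $\lstr_p^*(1,a)$ containing $\xb_1^r$ and $\ub_1^s$ for $r,s<p$. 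This shows that the elements $\xb_1^r\alpha_1^{k_1}\alpha_2^{k_2}\ub_1^s$, $r,s,k_1,k_2\in\I_{0,p-1}$, are linearly independent, so $\dim\nucleo=p^4$ and equality follows.
\end{itemize}

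A minor point: no inverse or sign is needed in $\lambda\mapsto\alpha_1$. With the standard convention $\Delta(X_{ij})=\sum_k X_{ik}\ot X_{kj}$ on matrix coordinates, one gets $\Delta(t_1)=t_1\ot 1+\lambda\ot t_1$ and $\Delta(t_2)=t_2\ot 1-t_1\lambda\ot t_1+\lambda^2\ot t_2$. These match $\Delta(\xb_1)$ and $\Delta(\xb_{21})$ exactly.
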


\begin{proof}
The first assertion follows directly from the definition. For (ii), define the comultiplication on $\ku[\mathtt{x}_1,\mathtt{u}_1,\mathtt{y}_0,\mathtt{v}_0,\mathtt{a}_1^{\pm},\mathtt{a}_2^{\pm}]$  by
\begin{align*}
	\Delta(\mathtt{x}_1) &= \mathtt{x}_1\ot 1 +\mathtt{a}_1 \ot \mathtt{x}_1, & \Delta(\mathtt{u}_1) &= \mathtt{u}_1\ot 1 + 1 \ot \mathtt{u}_1,\\
	\Delta(\mathtt{y}_0) &= \mathtt{y}_0 \ot 1-\mathtt{x}_1 \mathtt{a}_1 \ot \mathtt{x}_1+\mathtt{a}_1^2\ot \mathtt{y}_0 ,& \Delta(\mathtt{a}_1) &= \mathtt{a}_1 \ot \mathtt{a}_1,\\
	\Delta(\mathtt{v}_0) &= \mathtt{v}_0 \ot 1  -\, \mathtt{u}_1 \ot \mathtt{u}_1+1\ot \mathtt{v}_0 ,& \Delta(\mathtt{a}_2) &= \mathtt{a}_2 \ot \mathtt{a}_2.
\end{align*}
Since 
\begin{align*}
	\Delta(\mathtt{x}_1^2+2\mathtt{y}_0) &= (\mathtt{x}_1^2+2\mathtt{y}_0)\ot 1+\mathtt{a}_1^2\ot (\mathtt{x}_1^2+2\mathtt{y}_0),\\
	\Delta(\mathtt{u}_1^2+2\mathtt{v}_0) &= (\mathtt{u}_1^2+2\mathtt{v}_0)\ot 1+1\ot (\mathtt{u}_1^2+2\mathtt{v}_0),
\end{align*}
then $I=\langle\mathtt{x}_1^2+2\mathtt{y}_0,\mathtt{u}_1^2+2\mathtt{v}_0\rangle$ is a Hopf ideal. Hence $\cO(\mathbf{B})$ is a Hopf algebra. Thus the map 
$\operatorname{\operatorname{Fr}}: \cO(\mathbf{B}_{\rm{red}})\to \cO(\mathbf{B})$ given by $\operatorname{Fr}(b)=b^p,$\,  for $b\in\cO(\mathbf{B}_{\rm{red}})$ is a Hopf algebra monomorphism. Let $\phi\colon \cO(\mathbf{B}) \longrightarrow \mathbf{R}$ defined by
\begin{align*}
	\phi(\mathtt{x}_1) &= \xb_1,&\phi(\mathtt{y}_0) &= \xb_{21},& \phi(\mathtt{a}_1) &= \alpha_1,\\
	\phi(\mathtt{u}_1) &= \ub_1,& \phi(\mathtt{v}_0) &=\ub_{21},& \phi(\mathtt{a}_2) &= \alpha_2,
\end{align*}
which is a Hopf algebra epimorphism. The morphism $\operatorname{Fr}$ induces a structure of $\cO(\mathbf{B}_{\rm{red}})$-module on $\cO(\mathbf{B})$. With respect to this structure, $\cO(\mathbf{B})$ has a finite basis, given by the set
\begin{align*}
	\{ 	\mathtt{x}_1^{k_1}\mathtt{u}_1^{k_2}\mathtt{y}_0^{n_1}\mathtt{v}_0^{n_2}\mathtt{a}_1^{\pm m_1}\mathtt{a}_2^{\pm m_2}\colon k_1,k_2\in \I_{0, 1} ,n_1,n_2\in \I_{0,\wp},m_1,m_2\in \I_{0, p-1}\},
\end{align*}
where $\wp:= \frac{p-1}{2-\delta_{2,p}}$. Thus, $\cO(\mathbf{B})$ is flat. Since $\operatorname{Fr}(\cO(\mathbf{B}_{\rm{red}}))$ is adjoint invariant,  Remark \ref{remark-exact-sequence-hopf} applies. Hence we only need to check that $\ker \phi = \cO(\mathbf{B})\operatorname{Fr}(\cO(\mathbf{B}_{\rm{red}}))^+$. For this we sketch the main idea of the computations. Let $\mathtt{b}_{i,\pm} = \mathtt{a}_i^{\pm p} - 1$. Then 
\begin{align*}
	\mathtt{S} =\{\mathtt{x}_1^{k_1} \mathtt{u}_1^{k_2}\mathtt{y}_0^{n_1}\mathtt{v}_0^{n_2} \mathtt{a}_1^{\pm m_1} \mathtt{b}_{1,\pm}^{n_3} \mathtt{a}_2^{\pm m_2} \mathtt{b}_{2,\pm}^{n_4}\colon n_i\in\N_0, m_j\in \I_{0, p-1}, k_i\in \I_{0, 1}\}
\end{align*}
generates $\cO(\mathbf{B})$ as a vector space. Using the set of generators $\mathtt{S}$, the computations are straightforward.
\end{proof}

 \subsection{A restricted enveloping algebra quotient}\label{sec:enveloping}

We recall the definition of the Lie algebra $\mathfrak{fsl}_2$, called the fake $\mathfrak{sl}(2)$ Lie algebra. This is the three-dimensional Lie algebra with basis $\{e_{o},h_{o},f_{o}\}$ and brackets
\begin{align*}
& [h_{o},e_{o}]=e_{o}, & & [h_{o},f_{o}]=-f_{o}, & & [e_{o},f_{o}]=h_{o}.
\end{align*}
This algebra can be realized as the Lie subalgebra of $\mathfrak{sl}_3$ generated by
$\{E_{12}-E_{23},\,E_{11}-E_{33},\,E_{21}-E_{32}\}$, where $E_{ij}$ denotes the matrix with $1$ in the $(i,j)$-entry
and $0$ elsewhere. If $p\neq 2$, then $\mathfrak{sl}_2\simeq \mathfrak{fsl}_2$ via the isomorphism 
\begin{align}\label{eq: iso fsl2-sl2}
& e\mapsto 2e_{o}, & & f\mapsto f_{o}, & & h\mapsto 2 h_{o}.
\end{align}
Let $\mathfrak{m}$ denote the minimal $p$-envelope of $\mathfrak{fsl}_2$. If $p\neq 2$, then $\mathfrak{m}\simeq \mathfrak{fsl}_2\simeq \mathfrak{sl}_2$. On the other hand,  when  $p=2$, $\mathfrak{m}$ is a $5$-dimensional Lie algebra with basis $\{e_o,\tilde{e}_{o},f_o,\tilde{f}_{o},h_o \}$ (see for instance \cite[$\S$ 2]{A-Bagio-FF3}). Besides the defining brackets of $\mathfrak{fsl}_2$, its additional nonzero brackets are
\begin{align*}
[\tilde{e}_{o},\tilde{f}_{o}] = h_o,\qquad [\tilde{e}_{o},f_o] = e_o,\qquad [e_o,\tilde{f}_{o}] = f_o.
\end{align*}
To unify the notation, for $p\neq 2$ we define $\tilde{e}_{o}=\tilde{f}_{o}=0$.

Let $\mathfrak h_0=\mathfrak{fsl}_2\oplus \ku\{c\}$ be the Lie algebra with one dimensional center $\ku\{c\}$. Let $\mathfrak{a}_{\ell} = \ku \{s_0,\dots,s_{\ell},r_0,\dots,r_{\ell}\}$ be a $2(\ell+1)$-dimensional abelian Lie algebra. Setting $\beta_{n,a}:= n a + \binom{n}{2}$,  we define the action of $\mathfrak{h}_0$ on $\mathfrak{a}_{\ell}$ by
\begin{align*}
& e_{o} \rightharpoonup s_n = s_{n+1}, & & f_{o}\rightharpoonup s_n = -\beta_{n,a} s_{n-1}, & & h_{o}\rightharpoonup s_n = (n+a) s_n,\\
& e_{o}\rightharpoonup r_n = \beta_{n,a} r_{n-1}, & & f_{o} \rightharpoonup r_n = -r_{n+1},   & & h_{o}\rightharpoonup r_n = -(n+a) r_n,\\
& c\rightharpoonup s_n = s_n, & & c\rightharpoonup r_n = -r_n, & & n\in \I_{0, \ell},
\end{align*}
with the convention $s_j=r_j=0$ whenever
$j\notin\I_{0,\ell}$. For the case $p=2$, the previous action extends to the minimal $p$-envelope
$\mathfrak{h}:=\mathfrak{m}\oplus \ku\{c\}$ of $\mathfrak h_0$  by requiring it to be restricted. Thus,
\begin{align*}
&	\tilde{e}_o \rightharpoonup s_n =s_{n+2}, & & \tilde{f}_o \rightharpoonup s_n =\beta_{n-1,a}\ \beta_{n,a}\, s_{n-2}, \\
&  \tilde{e}_o \rightharpoonup r_n
=\beta_{n-1,a}\ \beta_{n,a}\, r_{n-2},	& &
\tilde{f}_o \rightharpoonup r_n =r_{n+2}.
\end{align*}
The Lie algebra $\g \coloneqq \mathfrak{a}_{\ell} \rtimes \mathfrak{h}$ is restricted with $p$-operation given by 
\begin{align*}
& e_o^{[p]} = \tilde{e}_{o},\,  & & f_o^{[p]} = \tilde{f}_{o},& &  \tilde{e}^{[p]}_o =\tilde{f}^{[p]}_o = 0,\\
& s_n^{[p]} = r_n^{[p]} = 0,& & h_o^{[p]} = h_o,&  & c^{[p]} = c.
\end{align*}

\FloatBarrier		
\begin{prop}\label{Prop:exact-sequence-double-fin-dim}
There exists an abelian exact sequence of Hopf algebras		
\begin{align*}
\xymatrix{ \nucleo   \ar@{^{(}->}[rr] ^{\iota} & & \mathtt{D_p(1,a)} \ar@{->>}[rr] ^{\pi} & & \mathfrak{u}(\g), }
\end{align*}
where $\mathfrak{u}(\g) $ is the universal restricted enveloping algebra of $\g= \mathfrak{a}_{\ell} \rtimes \mathfrak{h}$.
\end{prop}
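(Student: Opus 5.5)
The plan is to prove exactness for finite-dimensional Hopf algebras, where it reduces to three facts. First, $\nucleo$ is a normal Hopf subalgebra. Second, $\mathtt{D_p(1,a)}/\mathtt{D_p(1,a)}\nucleo^+$ is isomorphic to $\mathfrak u(\g)$. Third, the dimensions match: $\dim \mathtt{D_p(1,a)} = \dim\nucleo\cdot\dim\mathfrak u(\g)$. For finite-dimensional Hopf algebras, normality together with the isomorphism $\mathtt{D_p(1,a)}/\mathtt{D_p(1,a)}\nucleo^+\simeq B$ gives exactness by Schneider's results, since faithful flatness is automatic (Nichols--Zoeller). The sequence is abelian because $\nucleo$ is commutative ($q=1$) and $\mathfrak u(\g)$ is cocommutative.

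\emph{Normality.} Since $\nucleo$ is generated by $\xb_1,\ub_1,\xb_{21},\ub_{21},\alpha_1,\alpha_2$, I would check that the adjoint action of each generator $\xb_2, z_0, \ub_2, w_0, \zeta_i$ of $\mathtt{D_p(1,a)}$ maps these elements into $\nucleo$. This uses the relations \eqref{eq:double-relations-1} and the commutation formulas of the Appendix. For instance, $\alpha_1\xb_2\alpha_1^{-1}=\xb_2+\xb_1$ gives $\operatorname{ad}\alpha_1(\xb_2)-\xb_2\in\nucleo$. Also, $\ub_1\xb_2-\xb_2\ub_1=1-\alpha_1$ lies in $\nucleo$, and $\ub_2\xb_1-\xb_1\ub_2=\xb_1\ub_1+1-\alpha_1\in\nucleo$. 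The relations $\xb_1^2=-2\xb_{21}$ and $\xb_{21}\xb_2=(\xb_2+\xb_1)\xb_{21}$ control conjugation of $\xb_{21}$ by $\xb_2$.

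\emph{The quotient map.} In $\bar D:=\mathtt{D_p(1,a)}/\mathtt{D_p(1,a)}\nucleo^+$ we have $\xb_1=\ub_1=\xb_{21}=\ub_{21}=0$ and $\alpha_i=1$. Consequently $\xb_2,\ub_2,z_0,w_0,\zeta_i$ become primitive, by \eqref{eq: Delta H} and \eqref{eq: Delta H dual}, so $\bar D$ is generated by primitives and is cocommutative. I would define $\pi$ on generators by
\[
z_n\mapsto s_n,\quad w_n\mapsto \pm r_n \text{ (suitably normalized)},\quad \xb_2\mapsto e_o,\quad \ub_2\mapsto f_o,
\]
and send $\zeta_1,\zeta_2$ to the appropriate combinations of $h_o$ and $c$, with the choice dictated by $[\ub_2,\xb_2]=-\zeta(a)$ in the quotient. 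One must then check that the brackets in $\bar D$ match those of $\g$.

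The computations are as follows. The relation $\zeta_j z_0=z_0\zeta_j+\delta_{2,j}z_0$ gives the eigenvalues of $\zeta_i$ on $z_n$ and $w_n$. The bracket $[\ub_2,z_n]$ is computed inductively from $z_{n+1}=[\xb_2,z_n]$ and $\ub_2z_0=z_0\ub_2$ (mod $\xb_1,\ub_1$); it produces the coefficients $-\beta_{n,a}s_{n-1}$. The brackets $[z_n,z_m]=0$ and $[w_n,w_m]=0$, as well as $[w_m,z_n]=0$, hold in the quotient, and I would derive them from the Appendix lemmas. The $p$-th powers also have to match: $\zeta_i^p=\zeta_i$, $z_n^p=0$, $\xb_2^{p}$ equals $0$ or maps to $\tilde e_o$ when $p=2$ (where $\xb_2^{4}=0$ and $\xb_2^2$ plays the role of $\tilde e_o$), and similarly for $\ub_2$. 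This gives a surjective Hopf map $\mathfrak u(\g)\to\bar D$ sending $e_o\mapsto \xb_2$, etc. I expect this bracket verification to be the main obstacle, especially producing the exact coefficients $\beta_{n,a}$ and handling the $p=2$ elements $\tilde e_o,\tilde f_o$. I would first do it modulo $\nucleo^+$ using the formulas of Lemmas \ref{lemma:relations1}--\ref{lemma:relations3}.

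\emph{Dimension count.} We have $\dim\mathtt{D_p(1,a)}=\dim\mathtt{H_p(1,a)}^2=(p^{\ell+3+2\delta_{2,p}}p^2)^2$. On the other side, $\dim\mathfrak u(\g)=p^{\dim\g}=p^{2(\ell+1)+\dim\mathfrak m+1}$. For $\nucleo$, the basis from the preceding proposition (the quotient $\cO(\mathbf B)/\operatorname{Fr}$) gives $\dim\nucleo\le p^{6}$ for $p$ odd, with the analogous count $2^{\dots}$ for $p=2$. The triangular PBW basis shows $\dim\mathtt{D_p(1,a)}\le\dim\nucleo\cdot\dim\bar D$, while $\dim \bar D\le\dim\mathfrak u(\g)$ by surjectivity. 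Comparing with the exact product forces all the inequalities to be equalities. Hence $\mathfrak u(\g)\simeq\bar D$ and $\iota$ is injective, which completes the proof.
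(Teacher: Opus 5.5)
\textbf{Gap: the dimension count for $\nucleo$ is too weak.} Write $\bar D=\mathtt{D_p(1,a)}/\mathtt{D_p(1,a)}\nucleo^+$. The dimension count is the step that must rule out a surjection $\mathfrak u(\g)\twoheadrightarrow\bar D$ with nonzero kernel, and as written it does not. For odd $p$ one has $\dim\mathtt{D_p(1,a)}=p^{2\ell+10}$ and $\dim\mathfrak u(\g)=p^{2\ell+6}$. Nichols--Zoeller gives $\dim\mathtt{D_p(1,a)}=\dim\nucleo\cdot\dim\bar D$. So $\dim\bar D=\dim\mathfrak u(\g)$ follows only if you know $\dim\nucleo\le p^{4}$.

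With your bound $\dim\nucleo\le p^{6}$, the product of the upper bounds is $p^{2\ell+12}>\dim\mathtt{D_p(1,a)}$, so no equality is forced. For instance, $\dim\nucleo=p^6$ together with $\dim\bar D=p^{2\ell+4}$ is not excluded.

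The figure $p^6$ treats the six generators of $\nucleo$ as independent. It ignores that for $p\neq2$ the relation $\xb_1^2=-2\xb_{21}$ gives $\xb_{21}=-\tfrac12\xb_1^2$, and likewise $\ub_{21}=-\tfrac12\ub_1^2$. Since $\nucleo$ is commutative with $\xb_1^p=\ub_1^p=0$ and $\alpha_i^p=1$, it is spanned by the monomials $\xb_1^i\ub_1^j\alpha_1^k\alpha_2^l$ with $0\le i,j,k,l\le p-1$. Hence $\dim\nucleo\le p^4$, and the squeeze closes. For $p=2$ your naive count $2^6$ is already sharp, since $2^{2\ell+14}/2^{2\ell+8}=2^6$.

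\textbf{Comparison with the paper.} With this repair your route is correct and supplies a step the paper skips. Your normality check is the same as the paper's adjoint-action table. The paper, however, simply asserts that $\pi$ has kernel $\mathtt{D_p(1,a)}\iota(\nucleo)^+$ before invoking Remark~\ref{remark-exact-sequence-hopf}. Your surjection $\mathfrak u(\g)\twoheadrightarrow\bar D$ combined with the dimension count is what actually proves this.

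\textbf{Two further points.} First, your hedge $w_n\mapsto\pm r_n$ is needed. Once $\ub_2\mapsto f_o$ and $w_0\mapsto r_0$ are fixed, the relations $w_{n+1}=[\ub_2,w_n]$ and $f_o\rightharpoonup r_n=-r_{n+1}$ force $w_n\mapsto(-1)^n r_n$.

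Second, in the $p$-power check, $h_o^p=h_o$ requires $\zeta(a)^p=\zeta_1+a^p\zeta_2$ to equal $\zeta(a)$, that is, $a^p=a$. This holds for odd $p$, because the normalization $-2a=\ghost$ puts $a$ in $\mathbb F_p$. For $p=2$ it holds only when $a=1$; otherwise the $2$-map on $h_o$ must be modified, e.g.\ to $h_o+(a^2-a)c$. The paper's proof rests on the same tacit assumption.
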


\begin{proof}
Consider the natural inclusion $\iota : \nucleo \longrightarrow  \mathtt{D_p(1,a)}$. A direct computation, using Table \ref{table: adjoint R} below, shows that $\nucleo$ is a normal Hopf subalgebra of $\mathtt{D_p(1,a)}$.
\begin{table}[H]	
\begin{tabular}{|c|c|c|c|c|c|c|}
\hline
\diagbox{h}{y}  & $x_1$ 	& $x_{21}$ & $u_1$  & $u_{21}$  & $\alpha_1$  & $\alpha_2$    \\\hline
$z_0$		&  $0$			&  $0$  & $0$ 	& $0$ & $0$ & $0$  \\\hline
$x_2$		& 	$x_{21}$  	&  $ -x_1x_{21}$  & $ \alpha_1-1$ &  $-(\alpha_1-1)\ub_1$ & $-\alpha_1\xb_1$ & $ \,    -a  \xb_1 \alpha_2$   \\\hline
$\zeta_1$	& $x_1$ 		& $2\xb_{21}$  & $-u_1$ & $-2\ub_{21}$  & 0 & 0 \\\hline
$\zeta_2$	& $0$ 			&  $0$  & $0$& $ 0 $ & 0 & 0 \\\hline
$u_2$		& $-(\alpha_1-1)$ 	&$\xb_1(\alpha_1-1) $  & $-u_{21}$	& $u_{21}u_{1}$  &  $\alpha_1 u_1$ & $a\,  \alpha_2 u_1$ \\\hline
$w_0$		& $0$  			& $0$  &  $0$& $0$  & $0$ & $0$ \\\hline
\end{tabular}
\caption{Left adjoint action $h\cdot y$ of $\mathtt{D_p(1,a)}$ on $\nucleo$}
\label{table: adjoint R}
\end{table}
Let $\pi: \mathtt{D_p(1,a)}\longrightarrow \mathfrak{u}(\g)$ be the map given by 
\begin{align}\label{eq: hom D-pi-u(g)}
\begin{aligned}
&	\pi(z_i)=s_i, & & \pi(w_i)= r_i, & & \pi(\zeta_2)=c,\\
&	\pi(x_2)=e_{o}, & & \pi(\zeta(a))=h_{o}, & & \pi(u_2)= f_{o}.
\end{aligned}
\end{align}
Clearly $\pi$ is a Hopf algebra epimorphism  with kernel $\mathtt{D_p(1,a)}\iota(\nucleo)^+$. The result follows from Remark \ref{remark-exact-sequence-hopf}
\end{proof}

\subsection{A commutative square of Hopf algebras}

Let $\widetilde{\nucleo}$ be the Hopf subalgebra of $\DLstr_p(1, a)$ generated by $ x_1,u_1,\xb_{21}, \ub_{21},\alpha_1,\alpha_2$. From Proposition \ref{prop:Dtilde} \ref{prop:Dtilde-tres} there exists a Hopf algebra epimorphism $\DLstr_p(1,a)\twoheadrightarrow \mathtt{D_p(1,a)}$.  It is clear that $\nucleo$ be the image of $\widetilde{\nucleo} $ under  this epimorphism. Denote $\g_0:=\mathfrak{a}_{\ell} \rtimes \mathfrak h_0=\mathfrak{a}_{\ell} \rtimes (\mathfrak{fsl}_2\oplus \ku\{c\})$. If $p=2$, then $\g_0$  is not restricted and has 2-envelope $\g=\mathfrak{a}_{\ell} \rtimes \mathfrak{h}$ (see the previous subsection).

\begin{prop}
	Every row and column of the following diagram is an exact sequence of Hopf algebras.
	\begin{equation}\label{eq:cuadrado_magico}
		\begin{aligned}
			&\xymatrix{ \mathcal{O}(\mathbf{B}_{\rm{red}})  \ar@{^{(}->}[d] ^{\rm{Fr}} \ar@{^{(}->}[rr] ^{{\iota_{\Gb}}} & & \mathcal{O}(\Gb)  \ar@{->>}[rr] ^{\pi_{\Gb}}\ar@{^{(}->}[d] ^{} & & \mathcal{O}(\Gb_a^{ 2(\ell+3)})\ar@{^{(}->}[d] ^{}\\
				\mathcal{O}(\mathbf{B})\ar@{->>}[d] ^{}   \ar@{^{(}->}[rr] ^{\widetilde{\iota}} & & \DLstr_p(1, a)  \ar@{->>}[rr] ^{\pi_{\DLstr}}   \ar@{->>}[d] ^{}& & U(\g_0)  \ar@{->>}[d] ^{\psi}\\
				\nucleo   \ar@{^{(}->}[rr] ^{\iota} & & \mathtt{D_p(1,a)} \ar@{->>}[rr] ^{\pi} & & \mathfrak{u}(\g)}
		\end{aligned}
	\end{equation}
\end{prop}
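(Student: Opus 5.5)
We must verify six exact sequences: three rows and three columns. The third row is Proposition~\ref{Prop:exact-sequence-double-fin-dim}, and the first column is part (ii) of the preceding proposition on $\cO(\mathbf B)$. This leaves the top row, the middle row, the middle column and the right column. Throughout we use Remark~\ref{remark-exact-sequence-hopf}. In each case we check that the first map is injective with image stable under the adjoint action, that the inclusion is faithfully flat, that the second map is surjective, and that its kernel is generated by the augmentation ideal of the image.

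For the middle row, we identify $\cO(\mathbf B)$ with $\widetilde{\nucleo}$ via $\mathtt x_1\mapsto x_1$, $\mathtt y_0\mapsto x_{21}$, $\mathtt u_1\mapsto u_1$, $\mathtt v_0\mapsto u_{21}$, $\mathtt a_i\mapsto\alpha_i$. This map is well defined because the relations $x_1^2=-2x_{21}$ and $u_1^2=-2u_{21}$ hold. It is injective by the PBW basis of Proposition~\ref{prop:Dtilde}\ref{prop:Dtilde-dos}, since the $\widetilde{\nucleo}$-monomials are part of that basis. Normality follows from the infinite-dimensional analogue of Table~\ref{table: adjoint R}: the same computations hold in $\DLstr_p(1,a)$ because no truncation relation was used.

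Next we define $\pi_{\DLstr}$ on generators exactly as in \eqref{eq: hom D-pi-u(g)}, now with values in $U(\g_0)$. The generators of $\widetilde{\nucleo}$ go to $0$ and $\alpha_i$ goes to $1$. We then check that each relation of Proposition~\ref{prop:Dtilde}\ref{prop:Dtilde-uno} maps to a relation of $U(\g_0)$:
\begin{itemize}
\item $z_{n+1}=[x_2,z_n]$ gives $e_o\rightharpoonup s_n=s_{n+1}$;
\item the relation $u_2x_2$ from \eqref{eq:double-relations-1} gives $[f_o,e_o]=-h_o$;
\item the relation $u_2 z_0$ and the higher $z_n$ produce the coefficients $\beta_{n,a}$;
\item $z_{\ell+1}=0$ and $w_{\ell+1}=0$ are compatible with the convention $s_j=r_j=0$ for $j\notin\I_{0,\ell}$.
\end{itemize}

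The kernel equals $\DLstr_p(1,a)\widetilde{\nucleo}^+$. Modulo $\widetilde{\nucleo}^+$, the PBW basis \eqref{eq:pbw-Dtilde} reduces to monomials in $z_n, x_2, \zeta_i, u_2, w_n$. These correspond exactly to a PBW basis of $U(\g_0)$ of dimension $2(\ell+1)+5$, and the two Gelfand--Kirillov dimensions agree: $2(\ell+6)-6=2\ell+6$. Faithful flatness holds because $\DLstr_p(1,a)$ is free over $\widetilde{\nucleo}$ by the PBW basis.

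The middle column $\DLstr_p(1,a)\twoheadrightarrow\mathtt{D_p(1,a)}$ has kernel generated by the truncation elements, which are the generators of $M=\cO(\Gb)$. So we set the top middle inclusion to be $M\hookrightarrow\DLstr_p(1,a)$. It is normal, as stated at the start of \S\ref{sec:extensions}. Freeness again comes from PBW: $\DLstr_p(1,a)$ is free over $M$ with basis the restricted monomials, of rank $\dim\mathtt{D_p(1,a)}$. The kernel statement is the presentation of Proposition~\ref{prop:double-both} compared with Proposition~\ref{prop:Dtilde}\ref{prop:Dtilde-uno}.

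For the right column, $U(\g_0)\to\mathfrak u(\g)$ is the natural map. When $p=2$ it sends $e_o^2\mapsto\tilde e_o$, $f_o^2\mapsto \tilde f_o$, and these land in $\mathfrak u(\g)$. It is surjective because $\g$ is generated as a restricted Lie algebra by $\g_0$. Its Hopf kernel is the central polynomial subalgebra generated by the $p$-central elements
$$y^p-y^{[p]},\qquad y\in\{s_n,r_n,e_o,f_o,h_o,c\},$$
with $e_o^{4}, f_o^{4}$ replacing these when $p=2$. This subalgebra is $\cO(\Gb_a^{2(\ell+3)})$, since it has $2(\ell+1)+4$ primitive generators. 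This is the standard exact sequence for restricted enveloping algebras; for $p=2$ we compare with \cite{A-Bagio-FF3}.

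Finally, the top row is the restriction of the middle row to $M$. We set $\iota_{\Gb}=\widetilde\iota|_{\cO(\mathbf B_{\rm red})}$, whose image is generated by $x_1^p, x_{21}^p, u_1^p, u_{21}^p, \alpha_i^p$, and $\pi_{\Gb}=\pi_{\DLstr}|_M$. Then $\pi_{\DLstr}$ sends $x_2^{[p]}\mapsto e_o^{p}$ (or $e_o^{4}$), $\zeta_i^{[p]}\mapsto$ the corresponding $p$-central element, and so on, while the remaining generators of $M$ go to $0$. Hence $\pi_{\Gb}$ lands in the central subalgebra and is surjective onto it. Exactness of the top row is immediate from the polynomial presentation of $M$: $M/M\,\iota_{\Gb}(\cO(\mathbf B_{\rm red}))^+$ is the polynomial algebra on the remaining $2(\ell+3)$ generators. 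Commutativity of all squares is checked on generators.

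The main obstacle is the middle row: we must verify that $\pi_{\DLstr}$ respects all relations, in particular that the $\beta_{n,a}$ coefficients match the commutators $[u_2,z_n]$ from Appendix~\ref{sec:formulas}. We also need a clean proof, via PBW, that the kernel is exactly $\DLstr_p(1,a)\widetilde{\nucleo}^+$ and not larger.
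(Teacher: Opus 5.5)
Your proposal follows the paper's proof essentially step for step: the same $\psi$ and central subalgebra $Z$, the same $\pi_{\DLstr}$ killing $\widetilde{\nucleo}^+$, the top row obtained by restricting to $M=\mathcal O(\Gb)$, and the remaining row and column quoted from earlier results. Your PBW argument for $\ker\pi_{\DLstr}$ (spanning modulo the two-sided ideal $\DLstr_p(1,a)\widetilde{\nucleo}^+$, plus the PBW basis of $U(\g_0)$) and your treatment of the middle column and of faithful flatness correctly fill in what the paper calls a direct verification.

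Two small corrections, neither changing the route, are needed. First, drop the Gelfand--Kirillov check. It is not needed, and it is miscounted: $\dim\g_0=2(\ell+1)+4$, and $x_1,u_1$ contribute nothing. Moreover, the $T_i$ of \eqref{eq:pbw-Dtilde} must be taken equal to $1$ in $\DLstr_p(1,a)$, as in the paper's convention for $q=1$; otherwise $T_i-1\in\ker\pi_{\DLstr}$ but $T_i-1\notin\DLstr_p(1,a)\widetilde{\nucleo}^+$. Second, exactly as in the paper, sending $\zeta_1^{[p]}$ into $Z$ uses $a^p=a$, because
$$\pi_{\DLstr}(\zeta_1^p-\zeta_1)=(h_o^p-h_o)-a^p(c^p-c)+(a-a^p)c .$$
This condition is automatic for $p$ odd, but for $p=2$ it holds only when $a=1$, so that case of the statement inherits the same restriction.
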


\begin{proof}
Consider the Hopf algebra epimorphism $\psi:U(\mathfrak g_0)\longrightarrow\mathfrak u(\mathfrak g)$ defined by
\begin{align*}
& s_i\mapsto s_i, & &  r_i\mapsto r_i, & &
e_o\mapsto e_o, & &  f_o\mapsto f_o, & & 
h_o\mapsto h_o, & &  c\mapsto c.
\end{align*}
For $p=2$, $\psi$ is still surjective since $\widetilde e_o=e_o^2=\psi(e_o^2)$ and $\widetilde f_o=f_o^2=\psi(f_o^2)$. 
Let $Z$ be the central Hopf subalgebra of $U(\mathfrak g_0)$
generated by
\begin{align*}
& s_i^p, & & r_i^p, & &  h_o^p-h_o, & & c^p-c, & & 
e_o^{p(1+\delta_{2,p})}, & & 
f_o^{p(1+\delta_{2,p})}.
\end{align*}
Then $Z\simeq\mathcal O(\mathbf G_a^{2(\ell+3)})$ and $\ker\psi=U(\mathfrak g_0)Z^+$. Consequently, the right-hand column is an exact sequence of Hopf
algebras. Also, consider the Hopf algebra morphism $\pi_{\DLstr}: \DLstr_p(1,a)\longrightarrow U(\mathfrak g_0)$ defined by
\begin{align*}\begin{aligned}
&	z_i \longmapsto s_i,& & w_i\longmapsto r_i, & & x_2\longmapsto e_o, & & u_2\longmapsto f_o,\\
& \zeta(a)\longmapsto h_o,& & \zeta_2\longmapsto c, & & \alpha_1,\alpha_2\longmapsto1,&
&x_1,x_{21},u_1,u_{21}\longmapsto0.
\end{aligned}\end{align*}
A direct verification using the defining relations and comultiplications shows that $\pi_{\DLstr}$ is a well-defined Hopf algebra
epimorphism and $\ker\pi_{\DLstr}=\DLstr_p(1,a)\, \widetilde\iota\bigl(\mathcal O(\mathbf B)^+\bigr)$.
The restriction of $\pi_{\DLstr}$ to $\mathcal O(\mathbf G)$
induces the Hopf algebra epimorphism $\pi_{\mathbf G}$ given by
\begin{align*}\begin{aligned}
		& z_i^p\longmapsto s_i^p,&
		& w_i^p\longmapsto r_i^p, &
		& x_2^{p(1+\delta_{2,p})}
		\longmapsto e_o^{p(1+\delta_{2,p})},\\
		&u_2^{p(1+\delta_{2,p})}
		\longmapsto f_o^{p(1+\delta_{2,p})}, &
		& \zeta(a)^p-\zeta(a)
		\longmapsto h_o^p-h_o,&
		& \zeta_2^p-\zeta_2
		\longmapsto c^p-c.
	\end{aligned}
\end{align*}
Moreover, $\ker\pi_{\mathbf G}=\mathcal O(\mathbf G)\,\iota_{\mathbf G} \bigl(\mathcal O(\mathbf B_{\mathrm{red}})^+\bigr)$ and hence the upper row is an exact sequence of Hopf algebras. The exactness of the remaining rows and columns follows from the previous results, while the commutativity of the diagram follows
directly from the definitions of the maps.
\end{proof}

\section{Simple modules of \texorpdfstring{$\mathtt{D_p(1,a)}$}.} \label{section:simplemodules}

Throughout this section, we use the notation introduced in Section~\ref{sec:extensions}. In particular, we write $\mathfrak h=\mathfrak m\oplus\ku\{c\}$ and $\mathfrak g=\mathfrak a_\ell\rtimes\mathfrak h$. The following lemma from \cite{A-Dumas-Penha} allows us to determine the finite-dimensional simple modules of $\mathtt{D_p(1,a)}$.
\begin{lemma}\label{lema:quotient-nilpotent} \cite[Lemma 1.11]{A-Dumas-Penha}
	Let $A$ be an algebra and $\cF\subseteq A$ be a family of elements satisfying
	\begin{enumerate}[leftmargin=*,label=\rm{(\alph*)}]
		\item\label{item:quotient-nilpotent-commuting} the elements of $\cF$ commute with each other;
		
		\item\label{item:quotient-nilpotent} for any $M\in\lmod{A}$, with $\dim M<\infty$, any $x\in\cF$ acts nilpotently on $M$;
		
		\item\label{item:quotient-nilpotent-normal} $\cF$ is normal, i.e., the vector subspace $I$ of $A$ generated by $\cF$ satisfies
		\begin{align}\label{eq:left-twosided}
			AI=IA.
		\end{align}
	\end{enumerate}
Let $S$ be a  finite-dimensional simple left $A$-module. Then $S$ is a simple left $A/IA$-module.
	Thus the projection $A\to A/IA$ induces a bijection
	\begin{align*}
		\irr A\simeq\irr A/IA.
	\end{align*}
\end{lemma}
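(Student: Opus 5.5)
The idea is to show that every finite-dimensional simple $A$-module is killed by the ideal $AI = IA$. Fix a finite-dimensional simple left $A$-module $S$. First I would note that, by \eqref{eq:left-twosided}, $AI = IA$ is a two-sided ideal of $A$: it is a left ideal, and since it equals $IA$ it is also a right ideal. Hence $IA\cdot S = AI\cdot S$ is an $A$-submodule of $S$. By simplicity, either $IA\cdot S = 0$ or $IA\cdot S = S$.

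The main step is to exclude $IA\cdot S = S$. Here I would use \ref{item:quotient-nilpotent-commuting} and \ref{item:quotient-nilpotent}. Because $\dim S<\infty$, the operators on $S$ given by the elements of $\mathcal F$ form a family of pairwise commuting nilpotent endomorphisms. Such a family is simultaneously strictly upper triangularizable. The subspace $V$ they span is then a commuting space of nilpotent operators, and the associative algebra it generates is nilpotent. Consequently there is a nonzero vector $v\in S$ annihilated by every $x\in\mathcal F$; more usefully, the common kernel $S^{\mathcal F}=\{v\in S : I v = 0\}$ is nonzero.

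To finish I would show that $S^{\mathcal F}$ is an $A$-submodule. Take $v\in S^{\mathcal F}$ and $b\in A$. For $y\in I$, the element $yb$ lies in $IA = AI$, so $yb=\sum_k b_k y_k$ with $b_k\in A$ and $y_k\in I$. Then $y(bv)=\sum_k b_k y_k v=0$, so $bv\in S^{\mathcal F}$. Since $S^{\mathcal F}\neq0$, simplicity gives $S^{\mathcal F}=S$, that is $I S=0$, and therefore $IA\cdot S = I\cdot(AS)\subseteq IS=0$. Hence $S$ is an $A/IA$-module, and it is simple since the $A$- and $A/IA$-submodules coincide.

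For the bijection, pullback along $A\to A/IA$ sends simple $A/IA$-modules injectively to simple $A$-modules and preserves isomorphism classes in both directions. The argument above shows this map is surjective on finite-dimensional simples, so $\irr A\simeq\irr A/IA$.

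I expect the main obstacle to be exactly the nonvanishing of the common kernel, which is where commutativity is essential. Noncommuting nilpotents can generate all of $\operatorname{End}(S)$. The rest is formal, relying only on the identity $AI=IA$ from \eqref{eq:left-twosided} to move elements of $I$ past $A$.
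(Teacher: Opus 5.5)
Your argument is correct and is essentially the standard proof of this lemma, which the paper only cites from Andruskiewitsch--Dumas--Pe\~na Pollastri: the common kernel $S^{\mathcal F}$ is nonzero by (a) and (b), it is an $A$-submodule because $AI=IA$, hence $S^{\mathcal F}=S$ and $IA$ annihilates $S$. Your opening dichotomy $IA\cdot S\in\{0,S\}$ is never used afterwards and can be dropped.
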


To describe the finite-dimensional simple $\mathtt{D_p(1,a)}$-modules, given $0\leq k<p$, we take
\begin{equation}
	d(k):=
	\begin{cases}
		2k+1, & \text{if }0\leq k\leq\tfrac{p}{2},\\
		2k+1-p, & \text{if }\tfrac{p}{2}<k<p.
	\end{cases}
\end{equation}
For each $k,\mu\in\I_{0,p-1}$, consider the $\mathtt{D_p(1,a)}$-module $V_{k,\mu}$ with basis $\{v_0,\ldots,v_{d(k)-1}\}$ and action given by: for all $i\in \I_{0,d(k)-1}$,
\begin{align}\label{eq: rep simples D(H)}
	\begin{aligned}
		& u_2\cdot v_i=v_{i+1},& &
		\zeta(a)\cdot v_i=(k-i)v_i,& &
		x_2\cdot v_i=\left(ik-\tbinom{i}{2}\right)v_{i-1},\\
	&	\zeta_2\cdot v_i=\mu\,v_i,& &
		\alpha_j\cdot v_i=v_i,& &
		w_n\cdot v_i=0,\\
	&	z_n\cdot v_i=0,& &
		u_1\cdot v_i=0,& &
		x_1\cdot v_i=0,
	\end{aligned}
\end{align}
with the convention that $v_{-1}=v_{d(k)}=0$.

\begin{theorem}\label{simple-modules}
The complete set of representatives of the isomorphism classes of finite-dimensional simple $\mathtt{D_p(1,a)}$-modules is
\[\irr \mathtt{D_p(1,a)}= \{V_{k,\mu}:k,\mu\in\mathbb I_{0,p-1}\}. \]
\end{theorem}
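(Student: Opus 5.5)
The plan is to apply Lemma~\ref{lema:quotient-nilpotent} to $A=\mathtt{D_p(1,a)}$, reducing the classification to the simple modules of a quotient that is essentially a restricted enveloping algebra of a small Lie algebra.

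We take the family $\cF=\{\xb_1,\xb_{21},\ub_1,\ub_{21},\alpha_1-1,\alpha_2-1\}$. These elements generate the augmentation ideal of $\nucleo$, which is commutative since $q=1$, so condition \ref{item:quotient-nilpotent-commuting} holds. For \ref{item:quotient-nilpotent}, the truncation relations give $\xb_1^p=\ub_1^p=\xb_{21}^p=\ub_{21}^p=0$, and $(\alpha_i-1)^p=\alpha_i^p-1=0$ because $N=1$; hence each element of $\cF$ is nilpotent in $A$, and a fortiori on every module. For \ref{item:quotient-nilpotent-normal}, the subspace $I=\ku\cF$ must satisfy $AI=IA$. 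We check this on generators using Table~\ref{table: adjoint R} together with the commutation relations \eqref{eq:double-relations-1} and \eqref{eq: relations H}: for each generator $h$ of $A$ and each $y\in\cF$ we write $hy\in IA$ and $yh\in AI$. For instance, $\ub_2\xb_1=\xb_1(\ub_2+\ub_1)-(\alpha_1-1)$ and $\xb_2\ub_1=\ub_1\xb_2+(\alpha_1-1)$. The relations with $z_0,w_0,\zeta_i$ are even simpler, and $z_n,w_n$ commute with $\cF$ up to scalars. Lemma~\ref{lema:quotient-nilpotent} then gives $\irr A\simeq\irr A/IA$. Since $IA=A\nucleo^+$, we have $A/IA\simeq\mathfrak u(\g)$ by Proposition~\ref{Prop:exact-sequence-double-fin-dim}.

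It remains to classify $\irr\mathfrak u(\g)$, where $\g=\mathfrak a_\ell\rtimes\mathfrak h$. We apply Lemma~\ref{lema:quotient-nilpotent} again, this time with $\cF'=\{s_n,r_n\}$. This family is commutative, nilpotent since $s_n^p=r_n^p=0$, and normal since $\mathfrak a_\ell$ is an ideal of $\g$. This reduces the problem to $\mathfrak u(\mathfrak h)=\mathfrak u(\mathfrak m)\ot\mathfrak u(\ku c)$. The element $c$ is central with $c^p=c$, so it acts on a simple module by a scalar $\mu\in\mathbb F_p=\I_{0,p-1}$. Hence $\irr\mathfrak u(\g)$ is in bijection with $\irr\mathfrak u(\mathfrak m)\times\I_{0,p-1}$.

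For $p\neq2$ we have $\mathfrak m\simeq\mathfrak{sl}_2$ via \eqref{eq: iso fsl2-sl2}, and the simple restricted $\mathfrak{sl}_2$-modules $L(\lambda)$, $\lambda\in\I_{0,p-1}$, have dimension $\lambda+1$. Our task is to match these with $V_{k,\mu}$. The element $h_o=\tfrac12 h$ acts with highest weight $k=\lambda/2\in\mathbb F_p$, and $\dim=\lambda+1$, where $\lambda\equiv 2k$ is reduced into $\I_{0,p-1}$; this gives exactly $d(k)$. We will then verify that the formulas \eqref{eq: rep simples D(H)} define the corresponding module. For $p=2$ we use the known classification of $\irr\mathfrak u(\mathfrak m)$ from \cite{A-Bagio-FF3}: $h_o$ acts on a simple module by $k\in\{0,1\}$, and the modules have dimensions $d(0)=1$ and $d(1)=2$.

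Finally, we check directly that each $V_{k,\mu}$ satisfies the defining relations of $\mathtt{D_p(1,a)}$, for example $[\ub_2,\xb_2]=-\zeta(a)$ on a module where $\alpha_1$ acts trivially, and that $V_{k,\mu}$ is simple. We expect the main obstacle to be the normality check in the first step, in particular the products involving $\xb_2,\ub_2$ with $\xb_{21},\ub_{21}$ and the $\alpha_i$, and the uniform treatment of $p=2$, where $\tilde e_o,\tilde f_o$ appear.
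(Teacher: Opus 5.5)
Your strategy is the paper's. You use the same family $\cF_1=\{\xb_1,\ub_1,\xb_{21},\ub_{21},\alpha_1-1,\alpha_2-1\}$ in Lemma~\ref{lema:quotient-nilpotent}, combined with Proposition~\ref{Prop:exact-sequence-double-fin-dim}, to get $\irr\mathtt{D_p(1,a)}\simeq\irr\mathfrak u(\g)$. You then use $\{s_n,r_n\}$ to pass to $\mathfrak u(\mathfrak h)$, and finish with restricted $\mathfrak{sl}_2$-theory for $p\neq 2$ and \cite{A-Bagio-FF3} for $p=2$. The following parts are correct: your checks of the three hypotheses, the identification $IA=A\nucleo^+=\ker\pi$, the treatment of the central element $c$, and the matching $\lambda\equiv 2k$, $\dim L(\lambda)=d(k)$ for odd $p$.

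The step that fails is the case $p=2$. There you claim the simple $\mathfrak u(\mathfrak m)$-modules have dimensions $d(0)=1$ and $d(1)=2$. Since $1\le p/2$ when $p=2$, the first branch of the definition applies, so $d(1)=3$. The non-strict inequality in the definition of $d$ matters exactly in this case.

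This is not merely a matter of notation: $\mathfrak u(\mathfrak m)$ has no $2$-dimensional simple module at all. On any finite-dimensional module, $h_o=[e_o,f_o]$ acts with trace $0$. A $2$-dimensional module with $h_o$-weights $1,0$, which is what your $V_{1,\mu}$ would be, has trace $1$. Concretely, truncating the formulas at $v_2=0$ gives $[e_o,f_o]v_1=-f_ov_0=v_1\neq 0=h_ov_1$. So your final verification step would break down for $p=2$.

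The correct $V_{1,\mu}$ has basis $v_0,v_1,v_2$ with $h_o$-weights $1,0,1$, $\tilde e_o v_2=e_o^2v_2=v_0$ and $\tilde f_o v_0=f_o^2v_0=v_2$. It is the natural representation of $\mathfrak m$ on $\ku^3$, coming from the realization of $\mathfrak{fsl}_2$ inside $\mathfrak{sl}_3$; in characteristic $2$ this realization sends $\tilde e_o\mapsto E_{13}$ and $\tilde f_o\mapsto E_{31}$. Equivalently, it is the simple quotient of the $4$-dimensional baby Verma module of highest weight $1$ by its trivial submodule spanned by $f_o^3v_0$. With $d(1)=3$, the rest of your argument goes through and coincides with the paper's.
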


\pf
Taking $A=\mathtt{D_p(1,a)}$ and $\cF_1=\{\xb_1,\ub_1,\xb_{21},\ub_{21},\alpha_1-1,\alpha_2-1\}$, the commutation relations, Proposition \ref{Prop:exact-sequence-double-fin-dim} and Lemma \ref{lema:quotient-nilpotent} yield $\irr\mathtt{D_p(1,a)}\simeq\irr\mathfrak{u}(\g)$. Again, we can apply Lemma \ref{lema:quotient-nilpotent} with $A=\mathfrak{u}(\g)$ and $\cF_2=\{s_n,r_n:n\in\I_{0,\ell}\}$ and we
conclude that $\irr\mathfrak{u}(\g)\simeq\irr\mathfrak{u}(\mathfrak h)$.
The finite-dimensional simple $\mathfrak{u}(\mathfrak h)$-modules are well known. Indeed, given $k,\mu\in\I_{0,p-1}$, let $V_{k,\mu}$ be the vector space with basis $\{v_0,\ldots,v_{d(k)-1}\}$, with the action of $\mathfrak{u}(\mathfrak h)$ on $V_{k,\mu}$ given by: for all $i\in \I_{0,d(k)-1}$,
\begin{align}\label{eq: rep simples u(g)}
	\begin{aligned}
		& e_o\cdot v_i=\left(ik-\tbinom{i}{2}\right)v_{i-1}, &
		& f_o\cdot v_i=v_{i+1},\\
		& h_o\cdot v_i=(k-i)v_i, &
		& c\cdot v_i=\mu v_i,
	\end{aligned}
\end{align}
with the convention that $v_{-1}=v_{d(k)}=0$. For $p=2$, we have the additional actions 

\begin{align*}
& \widetilde e_o\cdot v_i=e_o^2\cdot v_i, & &  \widetilde f_o\cdot v_i=f_o^2\cdot v_i.
\end{align*}
Observe that for $p\neq2$, the formulas \eqref{eq: rep simples u(g)} are simply the rephrasing of the irreducible representations of $\mathfrak{u}(\mathfrak{sl}_2)\simeq 
\mathfrak{u}(\mathfrak h)$ under the isomorphism \eqref{eq: iso fsl2-sl2}. On the other hand, for $p=2$, the formulas \eqref{eq: rep simples u(g)} recover the result \cite[Proposition 3.1]{A-Bagio-FF3}.
Finally, the formulas in \eqref{eq: rep simples D(H)} follow from \eqref{eq: rep simples u(g)} and the epimorphism $\pi$ given in \eqref{eq: hom D-pi-u(g)}, 
by letting all elements of \(\cF_1\cup\cF_2\) act trivially. Therefore, the modules $V_{k,\mu}$, with
$k,\mu\in\mathbb I_{0,p-1}$, form a complete set of pairwise non-isomorphic finite-dimensional simple $\mathfrak u(\mathfrak h)$-modules.
\epf

\appendix


\section{Some formulas in \texorpdfstring{$\DLstr(q, a)$}.} \label{sec:formulas}
In this section, we collect some commutation formulas used in $\S$ \ref{sec:extensions}.  Recall that the raising factorial is defined by
\begin{align*}
[X]^{[0]}=1,&&	[X]^{[n]} &= \prod_{i=1}^{n} (X+i-1)\in \Z[X], \qquad n\geq 1.
\end{align*}
In particular, note that if $\car\ku=p\neq 0$ then $[X]^{[p]} = X^p-X$. To state the forthcoming formulas more concisely, we introduce some
notation. Let $s\in\Bbbk$ and $t\in \N_0$ and $y\in \{x,u\}$. We set
\begin{equation}\label{eq: NotationShort} 
	y_{\{j:s\}}:=y_j+s\, \delta_{2,j}y_1,\qquad \text{and } \qquad y^t_{\{j:s\}}:=\left(y_{\{j:s\}}\right)^t.
\end{equation}
Also, for  $n,m\in \N_0,$ we define 
\begin{equation}\label{eq: NotationLambdaCoefs} 
	\begin{aligned}
		\beta_{n,X}&:= n X+\tbinom{n}{2}\in \Z[X],
\qquad 		\Upsilon_{n,m}&:=\begin{cases}
			u_1^{m-n} &\text{ if } n<m,\\
			1 -\alpha_2T_1 (\alpha_1T_2)^m&\text{ if } n=m,\\
			-x_1^{n-m}\alpha_2T_1 (\alpha_1T_2)^m &\text{ if } n>m.
		\end{cases}
	\end{aligned}
\end{equation}
We note that $\beta_{n,X}$ was already defined in Section \ref{sec:enveloping}; we recall it here for the convenience of the reader.

\subsection{Case $p\neq 2$}
Suppose that $\car\ku = p\neq 2$.

\begin{lemma}\label{lemma:relations1} Let $r,t\in \N, s\in \Bbbk,$ and $H_{p}(q,a)$ be the algebra defined in \eqref{eq:def-H}. The following formulas hold in $H_{p}(q,a)$:
	\begin{align*}
		x_{21}^r x^t_{\{2:s\}} &=x^t_{\{2:s+r\}}\,  \xb_{21}^r,& z_m z_n &= q^{m-n} z_n z_m,\\
		\alpha_1^{r} x_{\{2:s\}}^{t}&=  x^t_{\{2:s+r\}}  \ \alpha_1^{r},& \alpha_2^{r} x_{\{2:s\}}^{t}&=  q^{-r t}   x^t_{\{2:s+  a r\}}  \alpha_2^{r},\\
		x_2 z_n^{t} &= q^{t} z_n^{t} x_2+ t\, z_{n+1} \, z_{n}^{t-1},& x_2^{r} z_n&= \sum_{i= 0}^{r}  \binom{r}{i}  q^{r-i}\,\, z_{n+i} \, x_2^{r-i},\\
		x_1^{r} x^{t}_{\{2:s\}}&=  \xb^t_{\{2:\tiny{s+\sfrac{r}{2} }\}}  x_1^{r}, & \xb^t_{\{2:s\}} &=\sum_{i=0}^t  \binom{t}{i} \tfrac{[2s]^{[i]}}{2^{i}} x_2^{t-i} x_1^{i},\\
	(s+t) x^{t}_{\{2:s/2\}}&=t x^{t-1}_{\{2:s/2\}}x_{\{2:s\}}+s x^{t}_{\{2:(s-1)/2\}}.\\
	\end{align*}
\end{lemma}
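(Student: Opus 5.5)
The plan is to prove every formula by induction on the exponents, starting only from the defining relations \eqref{eq: relation-lestrygon-geral-1} and \eqref{eq: relations H}. Since $p\neq 2$, we may use the original form \eqref{eq: relation-lestrygon-geral-1Original} of the relations: $x_1x_2=x_2x_1+\tfrac12 x_1^2$, $x_{21}=-\tfrac12x_1^2$, and $x_1z_n=qz_nx_1$.

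The basic tool is that conjugation by an invertible element, or commutation past a normal element, is multiplicative. If $gyg^{-1}=y'$, then $gy^tg^{-1}=(y')^t$. Hence it suffices to prove each formula for $t=1$ and $r=1$, and then iterate.

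\emph{The $\alpha$ formulas.} By \eqref{eq: relations H}, $\alpha_1x_1=x_1\alpha_1$ and $\alpha_1x_2=(x_2+x_1)\alpha_1$. Therefore $\alpha_1x_{\{2:s\}}=x_{\{2:s+1\}}\alpha_1$. Raising to the power $t$ and inducting on $r$ gives the formula. The argument for $\alpha_2$ is the same, using $\alpha_2x_1=q^{-1}x_1\alpha_2$; this produces the factor $q^{-rt}$ and the shift $s\mapsto s+ar$.

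\emph{The $x_1$ and $x_{21}$ formulas.} The relation $x_1x_2=(x_2+\tfrac12x_1)x_1$ gives $x_1x_{\{2:s\}}=x_{\{2:s+1/2\}}x_1$, because $x_1$ commutes with itself. Iterating in $t$ and then in $r$ gives the $x_1^r$ formula. Since $x_{21}=-\tfrac12 x_1^2$, the $x_{21}^r$ formula is the case of exponent $2r$ of the same identity.

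\emph{The binomial expansion of $x_{\{2:s\}}^t$.} I will argue by induction on $t$. Write $x^{t+1}_{\{2:s\}}=x^{t}_{\{2:s\}}(x_2+sx_1)$, insert the inductive expansion, and move the trailing $x_1$ past each $x_1^i$ and $x_2^{t-i}$ using $x_1^ix_2=(x_2+\tfrac i2x_1)x_1^i$. The resulting coefficients match by Pascal's rule together with $[2s]^{[i+1]}=[2s]^{[i]}(2s+i)$.

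\emph{The last identity.} I will compare both sides in the normal-ordered basis $x_2^{t-i}x_1^i$. To do this, expand $x_{\{2:s/2\}}^{t-1}x_{\{2:s\}}$ by the previous formula, and again use the relation $x_1^ix_2=(x_2+\tfrac i2x_1)x_1^i$. This reduces the identity to an identity among raising factorials, namely $(s+t)\binom ti[s]^{[i]}=t\bigl(\binom{t-1}{i}[s]^{[i]}+\dots\bigr)+s\binom ti[s-1]^{[i]}$, which follows from $[s]^{[i]}=[s-1]^{[i]}\,\tfrac{s+i-1}{s-1}$-type manipulations. If the coefficient bookkeeping becomes messy, I would instead prove the identity first for generic $s$ over $\mathbb{Q}(s)$ and then specialize.

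\emph{The $z$ formulas.} The formula for $x_2^rz_n$ follows by induction on $r$ from the definition $x_2z_m=qz_mx_2+z_{m+1}$, using Pascal's rule. The formula for $x_2z_n^t$ follows by applying the same rule $t$ times, which expresses $x_2z_n^t$ as $q^tz_n^tx_2$ plus a sum of terms $z_n^jz_{n+1}z_n^{t-1-j}$ with powers of $q$. These terms are then collected using $z_{n+1}z_n=qz_nz_{n+1}$.

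That collection step depends on $z_mz_n=q^{m-n}z_nz_m$, which I expect to be the main obstacle, since it is not a direct consequence of a single defining relation. I would try induction on $m-n$. The base case $m=n$ is trivial. For the inductive step, write $z_{m+1}=x_2z_m-qz_mx_2$ and compute $z_{m+1}z_n$ by moving $x_2$ through $z_n$ with the defining recursion. This produces a commutator involving $z_{n+1}$, which is handled by the inductive hypothesis. The remaining step is $z_{n+1}z_n=qz_nz_{n+1}$; for it I would use the known relations of $\cL_p(q,a)$ from \cite{AAH-triang,AAH-findim}, which were obtained from the Nichols-algebra structure. In the later applications $q=1$, so the $q$-powers are harmless. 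The essential content is therefore that the $z_n$ pairwise commute, which is where I would rely on the cited structure results.
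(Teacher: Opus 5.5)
Your route is the one the paper intends. The paper gives no written proof of this lemma; it is meant as routine induction on the exponents from \eqref{eq: relation-lestrygon-geral-1} and \eqref{eq: relations H}. Your derivations of the $\alpha_i$-, $x_1$- and $x_{21}$-formulas, the normal-ordered expansion of $x^t_{\{2:s\}}$, the last identity, and the formula for $x_2^rz_n$ are correct. The formula for $x_2z_n^t$ is also correct, given $z_{n+1}z_n=qz_nz_{n+1}$. Two small corrections:
\begin{itemize}
\item In the expansion step, it is the $x_2$ of the last factor $x_2+sx_1$ that has to be moved left past $x_1^i$, not the trailing $x_1$.
\item The coefficient identity for the last formula is cleanest via $[s]^{[i]}=(s+i-1)[s]^{[i-1]}$ and $[s-1]^{[i]}=(s-1)[s]^{[i-1]}$. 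These reduce it to $s\binom{t}{i}\bigl([s]^{[i]}-i[s]^{[i-1]}-[s-1]^{[i]}\bigr)=0$, so you need neither a division by $s-1$ nor a passage to generic $s$.
\end{itemize}

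The point that needs fixing is your justification of $z_{n+1}z_n=qz_nz_{n+1}$. Your reduction of $z_mz_n=q^{m-n}z_nz_m$ to adjacent indices is right. Precisely, set $R_{m,n}:=z_mz_n-q^{m-n}z_nz_m$ and $D(y):=x_2y-q^k y x_2$ for $y$ of $z$-degree $k$. Then $D(R_{m,n})=qR_{m,n+1}+R_{m+1,n}$, so induction on $m-n$ needs exactly $R_{n+1,n}=0$ as input.

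That input is \emph{not} a consequence of \eqref{eq: relation-lestrygon-geral-1}. All those relations are homogeneous for the bigrading $\deg x_i=(1,0)$, $\deg z_0=(0,1)$. Since $\ell\geq 1$, every relation other than $x_1z_0-qz_0x_1$ has $x$-degree at least $2$. Hence in bidegree $(1,2)$ the ideal is spanned by $z_0(x_1z_0-qz_0x_1)$ and $(x_1z_0-qz_0x_1)z_0$. On the other hand, $z_1z_0-qz_0z_1=x_2z_0^2-2qz_0x_2z_0+q^2z_0^2x_2$ involves only $x_2$, so it is nonzero in $\cL_p(q,a)$, and hence in $H_p(q,a)$.

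Nor can the relation be transported from the Nichols algebra when $p>0$. There $\toba(\Lstr_p(q,a))\simeq\lstr_p(q,a)$ is a proper quotient of $\cL_p(q,a)$, and relations valid in a quotient do not lift.

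So the relations $z_tz_{t+1}=q^{-1}z_{t+1}z_t$, $0\le t<\ell$, have to be imposed as part of the presentation of $\cL_p(q,a)$. They appear among the defining relations in \cite{AAH-triang}, and the PBW basis \eqref{eq:pbw-Dtilde} presupposes them. Once they are imposed, your induction completes the proof.
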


\begin{lemma}\label{lemma:relations2} Let $r,t\in \N,$ $s\in \Bbbk$,  $m,n\in \I_{0, \ell}$,  $j\in \I_2$ and $K(q,a)$ be the algebra defined in \eqref{alg-aux-cover}.  Using the notation \eqref{eq: NotationShort} and \eqref{eq: NotationLambdaCoefs}, the following formulas hold in $K(q,a)^{\operatorname{op}}$:
	\begin{align*}
		u_j^{r}\zeta_1^{t} &=  (\zeta_1+r )^{t} u_j^{r},&
		w_m w_n &= q^{m-n} w_n w_m,\\
		w_n^{r} u_2&=   q^{-r} u_2 w_n^{r}+ r w_n^{r-1}w_{n+1},&	
		w_n u_2^{t}&=   \sum_{i=0}^{t} \tbinom{t}{i} q^{-(t-i)}  u_2^{t-i} w_{n+i},  \\
		w_n^{r} \zeta_2^{t}&= (\zeta_2 +r)^{t} w_n^{r},& 
		w_n^{r} \zeta_1^{t}&= (\zeta_1+r n)^{t} w_n^{r},\\
		u_2^{r} u_1^{t} &=  u_1^{t} u^r_{\{2:\tiny{-\sfrac{t}{2} }\}}, &
		u^r_{\{2:s \}}&=\sum_{i=0}^r (-1)^i \binom{r}{i} \tfrac{[-2s]^{[i]}}{2^{i}} u_1^{i} u_2^{r-i}, \\
	\end{align*}
	\qedhere
\end{lemma}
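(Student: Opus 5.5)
The plan is to verify each identity of Lemma~\ref{lemma:relations2} by induction on the exponents, starting from the defining relations of $K_p(q,a)^{\operatorname{op}}$: the relations \eqref{eq: relation-lestrygon-dual1}, \eqref{eq relations H dual-op1}, and the definition $w_{n+1}=u_2w_n-q^{-1}w_nu_2$. The key point to keep in mind is that $\zeta_1,\zeta_2$ are primitive and act by derivations. By \eqref{eq relations H dual-op1} we have $u_j\zeta_1=(\zeta_1+1)u_j$ and $w_0\zeta_2=(\zeta_2+1)w_0$, and since $w_0\zeta_1=\zeta_1w_0$ with $w_{n+1}$ built from $u_2$ and $w_n$, we get $w_n\zeta_1=(\zeta_1+n)w_n$ and $w_n\zeta_2=(\zeta_2+1)w_n$. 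These are the case $r=t=1$ of the $\zeta$-formulas.

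For the first, fifth and sixth identities, I would first establish the case $r=1$, $t=1$ just described. Induction on $t$ (multiplying on the right by $\zeta$) gives $u_j\zeta_1^t=(\zeta_1+1)^tu_j$, and then induction on $r$ gives $u_j^r\zeta_1^t=(\zeta_1+r)^tu_j^r$. The formulas for $w_n^r\zeta_i^t$ follow in exactly the same way.

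For $w_mw_n=q^{m-n}w_nw_m$, I would induct on $m+n$. The base case $m=n$ is trivial. The case $w_1w_0=q\,w_0w_1$ comes from the Nichols-algebra structure: this relation is a consequence of $w_{\ell+1}=0$ together with the $q$-Serre type relations in $\cL^*_p(q,a)$. Via Lemma~\ref{lema: isomorfismo-V-inverse} it can be transported from the corresponding relation $z_mz_n=q^{m-n}z_nz_m$ in Lemma~\ref{lemma:relations1} for $\cL_p(q^{-1},a)$. This transport is the cleanest route, since $\psi$ sends $u_2\mapsto -x_2$ and $w_n\mapsto (-1)^nz_n$ with $q$ replaced by $q^{-1}$, and all identities on the $\cL^*$ side are images of identities on the $\cL$ side. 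Similarly, I would transport $w_n^ru_2=q^{-r}u_2w_n^r+r\,w_n^{r-1}w_{n+1}$. The case $r=1$ is the definition of $w_{n+1}$, and the inductive step uses the $q$-commutation of $w_{n+1}$ with $w_n$. The binomial expansion $w_nu_2^t=\sum_i\binom{t}{i}q^{-(t-i)}u_2^{t-i}w_{n+i}$ follows by induction on $t$ together with Pascal's rule.

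Finally, I would handle the formulas involving $u_1$ and $u_{\{2:s\}}$. From $u_1^2=-2u_{21}$ and $u_2u_{21}=u_{21}(u_2-u_1)$, I get $u_2u_1=u_1u_2+u_{21}=u_1(u_2-\tfrac12u_1)$, using $p\neq2$. Since $u_1$ commutes with $u_{\{2:s\}}-u_2$, this gives $u_{\{2:s\}}u_1=u_1u_{\{2:s-1/2\}}$. Inducting on $t$ yields $u_2^ru_1^t=u_1^tu^r_{\{2:-t/2\}}$. The last expansion is the normal ordering of $(u_2+s u_1)^r$ with $u_1$ moved to the left. Writing $(u_2+su_1)^{r}=(u_2+su_1)^{r-1}(u_2+su_1)$ and commuting $u_1$ past powers via the previous formula, the coefficients satisfy the recursion of rising factorials $[-2s]^{[i]}/2^i$ (with sign $(-1)^i$). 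This is the mirror of the corresponding identity for $x$ in Lemma~\ref{lemma:relations1}, and may again be obtained through $\psi$, with $x_2\mapsto -u_2$ flipping signs.

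The main obstacle will be this last coefficient bookkeeping, together with checking that the transport through $\psi$ (including the sign $u_2\mapsto -x_2$ and the op-structure) correctly produces the stated signs and powers of $q$. If the transport becomes messy, I would fall back on a direct induction on $r$, comparing coefficients of $u_1^iu_2^{r-i}$ and using $[X]^{[i+1]}=[X]^{[i]}(X+i)$.
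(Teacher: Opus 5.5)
The paper states these identities without proof; they are meant as direct computations. Your inductions for the $\zeta$-shifts are fine. So is the expansion of $w_nu_2^t$, which only needs the case $r=1$ of the third identity. The two $u$-identities also go through, via $u_{\{2:s\}}u_1=u_1u_{\{2:s-1/2\}}$ and Pascal's rule.

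\textbf{The gap: $w_mw_n=q^{m-n}w_nw_m$.} Your inductive step for $w_n^ru_2$ also depends on this identity, and your argument for it does not work.

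\emph{The base case.} Your claim that $w_1w_0=q\,w_0w_1$ follows from $w_{\ell+1}=0$ and ``$q$-Serre type'' relations is false. Every relation of Definition~\ref{def:les-dual} other than $w_{\ell+1}=0$ has all its terms containing $u_1$. So modulo words containing $u_1$, the defining ideal is generated by $w_{\ell+1}$, which has bidegree $(\ell+1,1)$ in $(u_2,w_0)$. On the other hand, $w_1w_0-q\,w_0w_1$ is a nonzero element of $\ku\langle u_2,w_0\rangle$ of bidegree $(1,2)$. Hence for $\ell\geq1$ it does not follow from the listed relations. Nor can you appeal to ``primitive implies zero'': in characteristic $p$ the algebra $\cL^*_p(q,a)$ is not a Nichols algebra (e.g.\ $u_1^p$ is a nonzero primitive).

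\emph{The induction.} Even granting $w_1w_0=q\,w_0w_1$, the induction on $m+n$ does not close. Put $R_{a,b}:=w_aw_b-q^{a-b}w_bw_a$ and use $w_ku_2=q^{-1}u_2w_k+w_{k+1}$. Then
\[
R_{a,b}\,u_2=q^{-2}u_2R_{a,b}+q^{-1}R_{a+1,b}+R_{a,b+1}.
\]
For $(a,b)=(t,t)$ the resulting relation is vacuous. So at odd level $2t+1$ these relations only link $R_{t+1,t}$ to $R_{2t+1,0}$ and do not force it to vanish (unless $2t+1>\ell$). For instance, $w_2w_1=q\,w_1w_2$ does not follow from $w_1w_0=q\,w_0w_1$ when $\ell\geq3$.

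\emph{Transport does not help.} Carrying the identity over from Lemma~\ref{lemma:relations1} through $\psi$ only relocates the problem. There, $z_mz_n=q^{m-n}z_nz_m$ has exactly the same status with respect to the stated presentation of $\cL_p(q,a)$.

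\emph{The repair.} Add the adjacent relations $w_{t+1}w_t=q\,w_tw_{t+1}$, $0\le t<\ell$, to the presentation. They hold in the Nichols algebra, and they correspond under $\psi$ to the relations $z_{t+1}z_t=q\,z_tz_{t+1}$, which are part of the presentation of the Laistrygon in \cite{AAH-triang}. With these relations your induction works: even levels are forced by $R_{t,t}=0$, odd levels by the added relations. The inductive step for $w_n^ru_2$ then goes through as well.

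\textbf{A smaller point: conventions.} Suppose the recursion $w_{n+1}=u_2w_n-q^{-1}w_nu_2$ of Definition~\ref{def:les-dual} is read in the same algebra as the third identity. Then its case $r=1$ is not the definition but its mirror, $w_{n+1}=w_nu_2-q^{-1}u_2w_n$; for $q=1$ the two together would force $2w_{n+1}=0$. It is the definition only when the recursion is computed in $K_p(q,a)$ and the identity in $K_p(q,a)^{\operatorname{op}}$. Yet your derivation of $u_2u_1=u_1(u_2-\tfrac12u_1)$ uses $u_1^2=-2u_{21}$ in $K_p(q,a)^{\operatorname{op}}$. This mixed reading is the one under which all the stated identities are consistent, and it is what transport through $\psi$ produces. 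You should state it explicitly, because reading all relations of Definition~\ref{def:les-dual} in a single algebra makes one of the two families of identities false.
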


\begin{lemma}\label{lemma:relations3} Let $r,t\in \N$ and $\DLstr(q, a)$ be the algebra defined in \eqref{alg-cover-positive}.  Using the notations \eqref{eq: NotationShort} and \eqref{eq: NotationLambdaCoefs}, the following formulas hold in $\DLstr(q, a)$:
\begin{align*}
u_2^{r} \alpha_1^{t} &= \alpha_1^{t}u^r_{\{2:t\}},\qquad\qquad\qquad\qquad\qquad\,\,\,  u_2^{r} \alpha_2^{t}=  q^{- r t}\alpha_2^{t}u^r_{\{2:at\}}, \\[.2em]
\zeta_1^{r} x_1^{t}&= x_1^{t}(\zeta_1+t )^{r},  \qquad\qquad \quad\qquad\qquad\,\,	\zeta_1^{r} x_2^{t} = x_2^{t}(\zeta_1+t )^{r},  \\[.2em]
\zeta_1^{r} z_n^{t} &=  z_n^{t}(\zeta_1+n t )^{r},\qquad\qquad\qquad\qquad\quad\! \zeta_2^{r} z_n^{t}=  z_n^{t}(\zeta_2+ t )^{r},\\[.2em]		
w_n^{r}  \xb_2&=  q^{r} \xb_2 w_n^{r} -rq^{r-1}\beta_{n,a}\alpha_1 T_2 w_{n-1}w_{n}^{r-1},\\[.7em]
w_n  x_2^{t}&=  \sum_{i=0}^{\min\{t,n\}}\, (-1)^i q^{t-i}\left( \textstyle\prod\limits_{k=0}^{i-1} \beta_{n-k,a} \right) \tbinom{t}{i} \,  x^{t-i}_{\{2:\tiny{\sfrac{i}{2} }\}}  (\alpha_1 T_2)^{i} w_{n-i},\\[.2em]
\ub_2^{r} z_n&=  \sum_{i=0}^{\min\{r,n\}}\, (-1)^i q^{-(r-i)}\left( \textstyle\prod\limits_{k=0}^{i-1} \beta_{n-k,a} \right) \tbinom{r}{i} \, \, z_{n-i}\,   \ub^{r-i}_{\{2:\tiny{a+n-\sfrac{i}{2} }\}}, \\[.2em]
\ub_2 z_n^{t}&=q^{-t} z_n^{t}  u_{\{2:t(a+n) \}} -q^{-(t-1)} t\,  \beta_{n,a}\, z_{n}^{t-1}z_{n-1},\\[.4em]
\ub_2  x_1^{t}&= \xb_1^{t} \ub_{\{2:t\}}  + t \xb_1^{t-1}(1-\alpha_1 ),\\
\ub_2 x_2^t & =x_2^t \ub_{\{2:t\}}+\tbinom{t}{2}x_2^{t-1}-t   \xb^{t-1}_{\{2:\tiny{\sfrac{1}{2} }\}}\alpha_1 T_2(t-1+\zeta(a)),\\
\ub_1 x_2^t & =x_2^t u_1+t \xb_2^{t-1}-  t  \xb^{t-1}_{\{2:\tiny{\sfrac{1}{2} }\}} \alpha_1 T_2,\\
\ub_j^r x_i &= x_i  \ub^r_{\{j:1\}} 	
+\delta_{i,1}\delta_{j,2}\,  \tbinom{r}{2} \left((r-2) \ub_{21} \ub^{r-3}_2 - \ub_{1} \ub^{r-2}_2 \right),\\
&\quad	+ \left(  r (1-\delta_{i,j}) (1-\alpha_1T_2)-\delta_{i,2}\delta_{j,2} \beta_{r,\zeta(a)}\,   \alpha_1T_2 \right)   \ub^{r-1}_{\{j:1\}}.\\
\end{align*}
Moreover, for  \(m,n\in\mathbb I_{0,\ell}.\), setting $S:=\max\{n,m\}$ and $s:=\min\{n,m\}$; then  we have
	\begin{align*}
		w_n^{r} z_m&= q^{r(m-n)}z_m w_n^{r}+ (-1)^{m} q^{r(m-n)}\, r \left( \textstyle\prod\limits_{k=1}^{S-s}\beta_{s+k,a}\right)   \tfrac{[2a]^{[S -s] }}{2^{S - s}}\, \,  \Upsilon_{n,m}\,  w_n^{r-1}, \\[.4em]
		w_n z_m^{t}&= q^{t(m-n)}z_m^{t} w_n+ (-1)^{m} q^{t(m-n)}\, t \left( \textstyle\prod\limits_{k=1}^{S-s}\beta_{s+k,a}\right)   \tfrac{[2a]^{[S -s] }}{2^{S - s}}\, \,   \, z_m^{t-1}\   \Upsilon_{n,m},  
	\end{align*}
	
\end{lemma}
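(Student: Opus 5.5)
Every formula in Lemma~\ref{lemma:relations3} is a commutation rule in $\DLstr(q,a)$ between a power of one generator and a power of another. I will prove them all by induction on the exponent, using only the defining relations of Proposition~\ref{prop:Dtilde}\ref{prop:Dtilde-uno}. These are \eqref{eq: relations H}, \eqref{eq relations H dual-op1} and \eqref{eq:double-relations-1}, together with the already established Lemmas~\ref{lemma:relations1} and~\ref{lemma:relations2}.

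\textbf{Group-like and derivation formulas.} These are immediate. From $\ub_2\alpha_1=\alpha_1(\ub_2+\ub_1)$ and $\ub_1\alpha_1=\alpha_1\ub_1$ we get $\ub_{\{2:s\}}\alpha_1=\alpha_1\ub_{\{2:s+1\}}$. Induction on $t$ then gives $\ub_2\alpha_1^t=\alpha_1^t\ub_{\{2:t\}}$. Raising to the $r$-th power (conjugation is multiplicative) gives the first identity, and the $\alpha_2$ case is analogous. For the $\zeta$'s, each $x_i$, $z_n$ is an eigenvector of $\operatorname{ad}\zeta_j$, so $\zeta_j y=y(\zeta_j+\lambda)$. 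Hence $\zeta_j^r y^t=y^t(\zeta_j+t\lambda)^r$. The eigenvalues of $z_n$ follow by induction on $n$ from $z_{n+1}=x_2z_n-qz_nx_2$: $\zeta_1$ has eigenvalue $n$ and $\zeta_2$ has eigenvalue $1$.

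\textbf{Cross relations with $w_n$ and $\ub_2$ against $x_2$, $z_n$.} First I establish the base cases $r=t=1$ by induction on $n$:
\[
w_n\xb_2=q\xb_2w_n-\beta_{n,a}\alpha_1T_2w_{n-1},\qquad \ub_2z_n=q^{-1}z_n\ub_{\{2:a+n\}}-\beta_{n,a}z_{n-1}.
\]
For the step, I write $w_{n+1}=\ub_2w_n-q^{-1}w_n\ub_2$ and commute $x_2$ through using the relations for $\ub_2x_2$, $w_nx_2$ and $\alpha_1\ub_j$. The recursion $\beta_{n+1,a}=\beta_{n,a}+n+a$ should appear from the term $\alpha_1T_2\zeta(a)$ in $\ub_2x_2$, combined with the $\zeta$-eigenvalue of $w_n$ (here $\zeta(a)$ acts on $w_{n-1}$ by $-(n-1+a)$ up to sign).

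With the base cases in hand, I pass to powers:
\begin{itemize}
\item For $w_n^r x_2$: induct on $r$, moving $w_{n-1}$ past $w_n$ with $w_mw_n=q^{m-n}w_nw_m$.
\item For $w_nx_2^t$ and $\ub_2^rz_n$: induct on $t$ (resp.\ $r$). This produces a sum over $i$ of terms $\prod_{k<i}\beta_{n-k,a}\binom{t}{i}$. The shift $x_{\{2:i/2\}}$ comes from moving $\alpha_1T_2$ and $x_1$-corrections past $x_2$, via Lemma~\ref{lemma:relations1}.
\item The identities for $\ub_2x_1^t$, $\ub_2x_2^t$, $\ub_1x_2^t$ and $\ub_j^rx_i$: obtain them similarly from \eqref{eq:double-relations-1}, using $\zeta(a)x_2=x_2(\zeta(a)+1)$ and $\alpha_1x_2=x_{\{2:1\}}\alpha_1$. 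The binomial $\binom{t}{2}$ and the factor $(t-1+\zeta(a))$ accumulate as partial sums.
\end{itemize}

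\textbf{Main obstacle: $w_n^rz_m$ and $w_nz_m^t$.} The key step is the case $r=t=1$, namely
\[
w_nz_m=q^{m-n}z_mw_n+c_{n,m}\Upsilon_{n,m}.
\]
I would prove it by double induction on $(n,m)$. Expand $z_m=x_2z_{m-1}-qz_{m-1}x_2$ if $m\ge n$, or $w_n$ via $\ub_2$ if $n>m$, and use the base cases above. The term $\Upsilon_{n,m}$ should be checked to have the needed properties:
\begin{itemize}
\item it is built from $u_1^{m-n}$, $x_1^{n-m}$ and group-likes;
\item it $q$-commutes appropriately with $w_n$ and $z_m$ (using $w_0x_1=qx_1w_0$ and $u_1z_0=q^{-1}z_0u_1$);
\item for $n=m$ it reduces to the $\ub_1x_2$ and $w_0z_0$ relations.
\end{itemize}
The coefficient $\prod\beta_{s+k,a}\,[2a]^{[S-s]}/2^{S-s}$ must satisfy the recursion coming from the expansion; verifying this identity of coefficients is where I expect the real work.

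Once the case $r=t=1$ holds, the powers follow by induction. The key point is that $\Upsilon_{n,m}$ commutes with $w_n$ and $z_m$ up to the factor $q^{m-n}$, so the correction terms simply accumulate to $r$ (resp.\ $t$) copies.
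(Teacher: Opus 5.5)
Induction on the exponents from the defining relations is the right method, and the paper itself states this lemma without proof. The parts you treat as routine do go through: the $\alpha$-conjugations, the $\zeta$-identities, the $u$-against-$x$ and $u$-against-$z$ formulas, and the step from $r=1$ to general $r$ in $w_n^rx_2$. One small point: the relations produce $1-\alpha_1T_2$, not $1-\alpha_1$, in $u_2x_1^t$.

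The gap is that every base case involving $w_n$ is left at ``should''. When you compute them, they do not give the displayed formulas.

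\textbf{The sign in $w_nx_2$.} Take $q=1$ and write $w_nx_2=x_2w_n+c_n\,\alpha_1T_2w_{n-1}$. Use $w_{n+1}=u_2w_n-w_nu_2$ and $u_2x_2=x_2(u_2+u_1)-\alpha_1T_2\zeta(a)$. The two $\zeta(a)$-terms combine to $\alpha_1T_2\bigl(w_n\zeta(a)-\zeta(a)w_n\bigr)=(n+a)\,\alpha_1T_2w_n$. Hence $c_{n+1}=c_n+(n+a)$ and $c_n=+\beta_{n,a}$. For example, $w_1x_2=x_2w_1+a\,\alpha_1T_2w_0$, which has the opposite sign from the statement. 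The printed sign matches the convention $w_{n+1}=w_nu_2-u_2w_n$, which is the one implicit in $f_o\rightharpoonup r_n=-r_{n+1}$. For $q^2\neq1$ the printed relations also leave an extra term $(q^{-1}-q)\,\alpha_1T_2\zeta(a)w_0$ in $w_1x_2$. So the conventions for $w_n$ must be settled before this induction can close.

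\textbf{The $w_nz_m$ identities.} The last two identities fail whatever convention you choose for $w_n$. Give $x_i$ degree $1$, $u_i$ degree $-1$, and all other generators degree $0$. Every defining relation is then homogeneous, so $w_nz_m-q^{m-n}z_mw_n$ has degree $m-n$. For $n<m$ it therefore cannot be a nonzero multiple of $\Upsilon_{n,m}=u_1^{m-n}$, and for $n>m$ it cannot involve $x_1^{n-m}$.

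Concretely, $w_0z_0=z_0w_0+1-\alpha_2T_1$ and $\alpha_2T_1x_2=(x_2+ax_1)\alpha_2T_1$ give $w_0z_1=q\,z_1w_0+qa\,x_1\alpha_2T_1$. The formula instead predicts $q\,z_1w_0-qa^2u_1$. Likewise, at $q=1$ with the paper's definition of $w_1$, one gets $w_1z_1=z_1w_1+a\,(1-\alpha_1\alpha_2T_1T_2)$, not $z_1w_1-(1-\alpha_1\alpha_2T_1T_2)$.

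Your accumulation step also rests on a false commutation rule. Since $u_1z_m=q^{-1}z_mu_1$ and $x_1z_m=qz_mx_1$, the $\Upsilon_{n,m}$ as defined satisfies $\Upsilon_{n,m}z_m=q^{n-m}z_m\Upsilon_{n,m}$ for $n\neq m$, not $q^{m-n}$. The corrections in $w_nz_m^t$ would then add up to a $q$-integer rather than to $t$.

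A proof therefore has to start by computing $w_nz_m$ for small $n,m$ and correcting the statement. That means interchanging the two cases of $\Upsilon_{n,m}$, recomputing the constants, and fixing the sign convention for $w_n$. Only after that can your induction on $r$ and $t$ be run.
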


\subsection{Comultiplication formulas }	
We begin with the formulas for the comultiplication of $z_n$ and $w_n$. Let $0\leq i\leq n$, and write uniquely $n-i=2k+j$, where $k\in\N_0$ and $j\in\{0,1\}$. Define
\begin{equation}\label{eq:Fni-Gni}
	\begin{aligned}
		F_{n,i} &:=   (-1)^{k}\tbinom{n}{i}\tfrac{(2(a+k)+i)^j}{2^j}\,\ \prod\limits_{l=0}^{k-1}  \tbinom{2(a+l)+i+1}{2}.
	\end{aligned}
\end{equation}

\begin{lemma}\label{lemma:comultiplication-zn-wn-Dtilde}
	The following comultiplication formulas are valid in $\DLstr(q, a)$:
	\begin{align*}
		\Delta(z_n)& =z_n\otimes 1 +\sum_{\substack{i+j+2k=n\\0\le j\le 1}}  (-1)^{j} F_{n,i}
		\,\,  x_1^j\xb_{21}^{k} \alpha_1^i \alpha_2 \otimes z_i,\\		
		\Delta(w_n)&=T_2^nT_1 \ot w_n +\sum_{\substack{i+j+2k=n\\ j\in\{0,1\}}} q^{-(n-i)} F_{n,i}\,  T_2^{n-i} w_i\ot \ub_{21}^k \ub_1^{j}.
	\end{align*}
\end{lemma}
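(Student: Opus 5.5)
We prove both formulas by induction on $n$, starting from the recursive definitions $z_{n+1}=\xb_2 z_n-q z_n\xb_2$ and $w_{n+1}=\ub_2 w_n-q^{-1}w_n\ub_2$, and using that $\Delta$ is an algebra map. For $n=0$ both formulas reduce to the skew-primitivity in \eqref{eq: Delta H} and \eqref{eq: Delta H dual}. The sums then have the single term $i=0$, $j=k=0$, and $F_{0,0}=1$, so they read $\Delta(z_0)=z_0\ot 1+\alpha_2\ot z_0$ and $\Delta(w_0)=T_1\ot w_0+w_0\ot 1$. In $K_p(q,a)^{\operatorname{op}}$ one must keep track of the order of tensorands, and we will follow the conventions of Lemma \ref{lemma:H dual-op-both}.

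For the inductive step on $z_n$, write $\Delta(\xb_2)=\xb_2\ot 1+\alpha_1\ot\xb_2$ and expand $\Delta(z_{n+1})=\Delta(\xb_2)\Delta(z_n)-q\Delta(z_n)\Delta(\xb_2)$.

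\begin{itemize}
\item The term $z_n\ot 1$ contributes $z_{n+1}\ot 1$, together with a cross term $(\alpha_1 z_n-qz_n\alpha_1)\ot \xb_2$, which vanishes by \eqref{eq: relations H}.
\item A general term $x_1^j x_{21}^k\alpha_1^i\alpha_2\ot z_i$ produces two kinds of contributions.
\begin{itemize}
\item Pairing with $\alpha_1\ot\xb_2$ gives $\alpha_1 x_1^jx_{21}^k\alpha_1^i\alpha_2\ot \xb_2 z_i - q\,x_1^jx_{21}^k\alpha_1^i\alpha_2\alpha_1\ot z_i\xb_2$. Since $\alpha_1$ commutes with $x_1$ and $x_{21}$, this equals $x_1^jx_{21}^k\alpha_1^{i+1}\alpha_2\ot z_{i+1}$, which raises $i$ by one.
\item Pairing with $\xb_2\ot 1$ gives $[\xb_2, x_1^jx_{21}^k\alpha_1^i\alpha_2]_q\ot z_i$. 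Here we commute $\xb_2$ past $\alpha_1^i\alpha_2$ using the formulas $\alpha_1^r x_{\{2:s\}}^t=x^t_{\{2:s+r\}}\alpha_1^r$ and $\alpha_2^r x_{\{2:s\}}^t=q^{-rt}x^t_{\{2:s+ar\}}\alpha_2^r$ of Lemma \ref{lemma:relations1}, and past $x_1^jx_{21}^k$ using $x_{21}^r x_{\{2:s\}}=x_{\{2:s+r\}}x_{21}^r$ and $x_1x_{\{2:s\}}=x_{\{2:s+1/2\}}x_1$.
\end{itemize}
\end{itemize}

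In the second contribution the leading $\xb_2$ terms cancel, and what remains is a multiple $(i+a+k+j/2)\,x_1^{j+1}x_{21}^k\alpha_1^i\alpha_2$. When $j=1$, the factor $x_1^2=-2x_{21}$ turns this into a multiple of $x_{21}^{k+1}$ with $j=0$. The second contribution therefore increases $j+2k$ by one with an explicit scalar coefficient.

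Collecting terms gives a recursion of the form $F_{n+1,i}=F_{n,i-1}+c_{n,i}F_{n,i}$, with the appropriate sign $(-1)^j$ and with $c_{n,i}$ built from $2(a+k)+i$. The main obstacle is to verify that the closed form \eqref{eq:Fni-Gni} satisfies this recursion. We handle this by splitting into the cases $n-i$ even ($j=0$) and $n-i$ odd ($j=1$):
\begin{itemize}
\item In the even case, the passage from $j=0$ to $j=1$ multiplies by $(2(a+k)+i)/2$.
\item In the odd case, the passage from $j=1$ to $j=0$ with $k\mapsto k+1$ multiplies by $-\binom{2(a+k)+i+1}{2}$ divided by the previous linear factor.
\end{itemize}
The binomial $\binom{n}{i}$ is then recovered through Pascal's rule $\binom{n}{i-1}+\binom{n}{i}=\binom{n+1}{i}$. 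This step requires checking that the shift of $i$ inside the product $\prod_l\binom{2(a+l)+i+1}{2}$ is compatible with the two contributions. We will first verify it for $n\le 3$ to fix the normalization, and then carry out the general identity.

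The formula for $w_n$ is obtained the same way from $\Delta(\ub_2)=\ub_2\ot 1+T_2\ot\ub_2+\zeta(a)T_2\ot\ub_1$. The extra primitive-like term $\zeta(a)T_2\ot\ub_1$ plays the role that $x_1$ played before, and the relations $\ub_2\ub_1^t=\ub_1^t u^{\,}_{\{2:-t/2\}}$ (Lemma \ref{lemma:relations2}) and $u_1^2=-2u_{21}$ produce the same recursion, now with the factor $q^{-(n-i)}T_2^{n-i}$ on the left. Alternatively, one can transport the $z_n$ formula through the isomorphism $\psi$ of Lemma \ref{lema: isomorfismo-V-inverse}, which sends $(\Lstr^*,c^*)$ to $(\Lstr_p(q^{-1},a),c^{-1})$, and then adjust the signs, the powers of $q$, and the order of the tensorands.
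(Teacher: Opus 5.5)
Your route is the paper's: induct on $n$ through the defining recursions and reduce everything to a recurrence for $F_{n,i}$.

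For $z_n$ your bookkeeping is correct. To get $\alpha_1 z_n=q z_n\alpha_1$ for $n\ge 1$ you also need $x_1z_n=qz_nx_1$, not only \eqref{eq: relations H}. The recurrence your computation yields is exactly the one the paper records:
\begin{itemize}
\item $F_{n+1,i}=F_{n,i-1}+(a+k+i)F_{n,i}$ if $n-i=2k$;
\item $F_{n+1,i}=F_{n,i-1}-(2(a+k+i)+1)F_{n,i}$ if $n-i=2k+1$.
\end{itemize}
So the multiplier involves $a+k+i$, not $2(a+k)+i$. The verification you postpone is short. Since $\prod_{l=0}^{k-1}\binom{2(a+l)+i+1}{2}=2^{-k}[2a+i]^{[2k]}$, one has $F_{n,i}=(-1)^k2^{-(k+j)}\binom{n}{i}[2a+i]^{[n-i]}$. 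In both parities the recurrence then becomes $G_{n+1,i}=(2a+n+i)G_{n,i}+G_{n,i-1}$ for $G_{n,i}=\binom{n}{i}[2a+i]^{[n-i]}$. After Pascal's rule this reduces to $i\binom{n}{i}=(n-i+1)\binom{n}{i-1}$.

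The genuine gap is the $w_n$ half. There ``the same recursion'' is asserted rather than computed, and with the conventions you announce it is false. Suppose you use the presentation of Lemma~\ref{lemma:H dual-op-both}, so that $w_{n+1}=u_2w_n-q^{-1}w_nu_2$ is a product in $K_p(q,a)^{\operatorname{op}}$.
\begin{itemize}
\item For $n=1$ and $q^2\neq 1$, the term $(u_2T_1-q^{-1}T_1u_2)\otimes w_0=(q-q^{-1})T_1u_2\otimes w_0$ survives.
\item For $q=1$, the relation $w_0\zeta(a)=(\zeta(a)+a)w_0$ gives $\Delta(w_1)=w_1\otimes 1+T_1T_2\otimes w_1-aT_2w_0\otimes u_1$, whereas the statement has $+F_{1,0}=+a$. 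At $q=1$ this reading produces $(-1)^jF_{n,i}$ throughout.
\end{itemize}

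The displayed formula does come out if you form $w_{n+1}=u_2w_n-q^{-1}w_nu_2$ in $K_p(q,a)=\cL_p^*(q,a)\#\cA$ itself, before passing to the opposite algebra. In that algebra:
\begin{itemize}
\item $T_1u_2=qu_2T_1$, $T_2w_i=q^{-1}w_iT_2$ and $w_iu_1=qu_1w_i$ make all cross terms cancel.
\item The summand $\zeta(a)T_2\otimes u_1$ contributes $a+i$, through $\zeta(a)w_i=w_i\zeta(a)+(a+i)w_i$.
\item The summand $T_2\otimes u_2$ contributes $\tfrac{2k+j}{2}$, through $u_2u_1^m=u_1^mu_2+\tfrac{m}{2}u_1^{m+1}$ (Lemma~\ref{lemma:relations2} read back in $K_p(q,a)$).
\end{itemize}
Take $u_{21}$ to be the commutator of $K_p(q,a)^{\operatorname{op}}$, so that $u_1^2=-2u_{21}$. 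Then this is literally the $z_n$ recurrence, with no powers of $q$. The factor $q^{-(n-i)}$ appears only when $T_2^{n-i}w_i$ is rewritten as a product in $K_p(q,a)^{\operatorname{op}}$.

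As printed, Definition~\ref{def:les-dual} cannot be read in a single one of the two products: its Jordan relations hold in the opposite algebra, its $q$-relations in $K_p(q,a)$. The paper's one-line proof leaves this implicit. You must fix the convention explicitly; otherwise your induction proves a formula differing from the statement by $(-1)^j$, and for $q^2\neq1$ one not even of the stated shape.
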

\pf
The computations are done directly by induction, using the fact that the coefficients $F_{n,i}$ satisfy the recurrence: 
\begin{align*}
	F_{0,0}&=1,& 	F_{n+1,i} &=   (-1)^{n-i}((j+1)(a+k+i) +j)  F_{n,i}   +F_{n,i-1},
\end{align*}
where $n-i = 2k +j$.\epf

Now we give the formulas for the comultiplication of powers of the generators in $\DLstr(q, a)$. We only write the formulas that are non trivial.

\begin{lemma}\label{lemma:comultiplication-powers-Dtilde}
	For every $k\in\mathbb N_0$, the following comultiplication formulas
	hold in $\DLstr(q,a)$:
	\begin{align*}
		\Delta(x_2^k)
		&=
		\sum_{i=0}^k
		\binom{k}{i}
		x_{\{2:i/2\}}^{k-i}\alpha_1^i
		\otimes x_2^i,
		\\
		\Delta(u_2^k)
		&=
		\sum_{\substack{\mathbf{k}\in\mathbb N_0^4\\|\mathbf{k}|=k}}
		c_{\mathbf{k}}\,
		u_2^{k_1}
		[-2(\zeta_1+a\zeta_2)]^{[k_3]}
		T_2^{k-k_1}
		\otimes
		u_1^{k_2+k_3}u_2^{k_4},
	\end{align*}
	where, for
	$\mathbf{k}=(k_1,k_2,k_3,k_4)\in\mathbb N_0^4$, we set
	\[
	|\mathbf{k}|:=k_1+k_2+k_3+k_4,
	\qquad
	c_{\mathbf{k}}
	:=
	\binom{k}{k_1,k_2,k_3,k_4}
	\frac{(-1)^{k_3}}{2^{k_2+k_3}}
	[k_1]^{[k_2]}.
	\]
\end{lemma}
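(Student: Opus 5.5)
We argue by induction on $k$, using that $\Delta$ is an algebra map and that the generators are skew-primitive (or nearly so) by \eqref{eq: Delta H} and \eqref{eq: Delta H dual}.

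\emph{The formula for $\Delta(x_2^k)$.} Since $x_2\in\Pc_{\alpha_1,1}$, we have $\Delta(x_2)=x_2\ot 1+\alpha_1\ot x_2$. Assuming the formula for $k$, we compute
\[
\Delta(x_2^{k+1})=\Big(\sum_i \tbinom{k}{i} x^{k-i}_{\{2:i/2\}}\alpha_1^i\ot x_2^i\Big)\big(x_2\ot 1+\alpha_1\ot x_2\big).
\]
Several facts from Lemma \ref{lemma:relations1} enter here.
\begin{itemize}
\item The relation $\alpha_1^i x_2=x_{\{2:i\}}\alpha_1^i$, the case $t=1$ of $\alpha_1^{r}x^{t}_{\{2:s\}}=x^t_{\{2:s+r\}}\alpha_1^{r}$, moves $\alpha_1^i$ past $x_2$.
\item The terms $x^{k-i}_{\{2:i/2\}}x_{\{2:i\}}$ and $x^{k-i+1}_{\{2:(i-1)/2\}}$ (the latter coming from index $i-1$ of the second summand) must then be recombined into $\binom{k+1}{i}x^{k+1-i}_{\{2:i/2\}}$.
\item This is exactly the last identity of Lemma \ref{lemma:relations1}, $(s+t)x^{t}_{\{2:s/2\}}=t\,x^{t-1}_{\{2:s/2\}}x_{\{2:s\}}+s\,x^{t}_{\{2:(s-1)/2\}}$, applied with $s=i$ and $t=k+1-i$.
\item The binomial coefficients match via $\binom{k}{i}\frac{k+1}{k+1-i}=\binom{k+1}{i}$ and $\binom{k}{i-1}=\binom{k+1}{i}\frac{i}{k+1}$, so the identity above gives precisely the required coefficient.
\end{itemize}
One subtlety is that the factor $k+1$ may vanish in characteristic $p$. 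To avoid dividing by it, we prove the formula first in the $\Z[\tfrac12]$-form of the algebra, which is generic and has the PBW basis of Proposition \ref{prop:Dtilde}, and then specialize. Alternatively, we verify directly that the coefficient identity is polynomial.

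\emph{The formula for $\Delta(u_2^k)$.} Here $\Delta(u_2)=u_2\ot1+T_2\ot u_2+\zeta(a)T_2\ot u_1$, which splits into three terms. Induction on $k$ multiplies the formula for $k$ on the right by these three terms, and we commute everything into normal form $u_2^{k_1}[\,\cdot\,]^{[k_3]}T_2^{k-k_1}\ot u_1^{k_2+k_3}u_2^{k_4}$. The relations needed are the following.
\begin{itemize}
\item In the left tensor factor, $u_2^{k_1}$ is moved past $\zeta(a)$ and $T_2$ using $u_j^r\zeta_1^t=(\zeta_1+r)^tu_j^r$ (with $\zeta_2$ acting trivially on $u_j$), together with $u_iT_j=q^{\delta_{1,j}}T_ju_i$. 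The shift of $\zeta(a)$ is what produces the raising factorial $[-2\zeta(a)]^{[k_3]}$: multiplying $[-2\zeta(a)]^{[k_3]}$ by $\zeta(a)$ and re-expanding, $X[X]^{[m]}=[X]^{[m+1]}-m[X]^{[m]}$ with $X=-2\zeta(a)$, accounts for the factor $-\tfrac12$ and the sign $(-1)^{k_3}$.
\item In the right tensor factor, $u_1^{k_2+k_3}u_2^{k_4}\,u_1$ is rewritten using $u_2^ru_1=u_1u^r_{\{2:-1/2\}}$ and the expansion of $u^r_{\{2:s\}}$ in Lemma \ref{lemma:relations2}. The resulting extra factors $u_1$ raise $k_2$; this bookkeeping is what generates $\frac{1}{2^{k_2}}$ and the rising factorial $[k_1]^{[k_2]}$.
\end{itemize}

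\emph{Main obstacle.} The real work, and the step we expect to be hardest, is the verification that the coefficients $c_{\mathbf k}$ satisfy the four-term recurrence obtained from the three summands of $\Delta(u_2)$. We would handle it by comparing the coefficient of each fixed normal monomial, reducing the claim to a multinomial identity in $\Z[\tfrac12][X]$. If this is unwieldy, a cleaner route is to work in the associated graded object, or to pass to the subalgebra generated by $u_1,u_2,\zeta(a),T_2$, find a closed form for $\Delta(u_2)$ there, and then expand the $k$-th power by a noncommutative multinomial formula.
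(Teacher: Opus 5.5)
The half on $\Delta(x_2^k)$ is the paper's own argument. You induct on $k$, move $\alpha_1^i$ past $x_2$, and recombine the two contributions to $\alpha_1^i\otimes x_2^i$ using the last identity of Lemma~\ref{lemma:relations1} with $s=i$, $t=k+1-i$. Your workaround for dividing by $k+1$ is fine. You only need a $\Z[\tfrac12]$-form of the Hopf subalgebra generated by $x_1,x_2,\alpha_1^{\pm1}$ (a bosonized Jordan plane), not of all of $\DLstr_p(q,a)$.

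The $u_2$ half is where the gap is. You reduce it to ``the $c_{\mathbf k}$ satisfy the four-term recurrence'' and stop, but that verification is the entire content of the claim. It is also not routine. Take $k=2$ and use exactly the relations you invoke: $u_2\zeta(a)=(\zeta(a)+1)u_2$, i.e.\ $\zeta(a)u_2=u_2(\zeta(a)-1)$, together with $T_2u_2=u_2T_2$. Only two products contribute to the component of shape $u_2(\cdots)T_2\otimes u_1$, and they give
\[
(u_2\otimes 1)(\zeta(a)T_2\otimes u_1)+(\zeta(a)T_2\otimes u_1)(u_2\otimes 1)=u_2\bigl(2\zeta(a)-1\bigr)T_2\otimes u_1 .
\]
The displayed coefficients $c_{(1,1,0,0)}=1$ and $c_{(1,0,1,0)}=-1$ instead produce $u_2\bigl(2\zeta(a)+1\bigr)T_2\otimes u_1$. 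So, with the conventions of \eqref{eq relations H dual-op1} and Lemma~\ref{lemma:relations2}, your recurrence cannot close on $c_{\mathbf k}$ as written. For $k\le 3$ the direct computation matches $(-1)^{k_2+k_3}$ in place of $(-1)^{k_3}$. This sign has to be settled before any induction can succeed.

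Your sketch of where the factors come from is also off. The factor $[k_1]^{[k_2]}$ cannot come from reordering the right-hand tensor factor. That reordering is already active when $k_1=0$, and there every term with $k_2\ge 1$ vanishes. It comes instead from $[X]^{[k_3]}u_2=u_2[X+2]^{[k_3]}$, with $X=-2\zeta(a)$, followed by re-expanding $[X+2]^{[k_3]}$ in the $[X]^{[j]}$. The right-hand reordering $u_2u_1=u_1u_2-\tfrac12u_1^2$ is what completes powers of $\zeta(a)$ into rising factorials; at $k=2$, for example, $\zeta(a)^2-\tfrac12\zeta(a)=\tfrac14[X]^{[2]}$.

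The paper does not run a separate recurrence for $u_2$. It deduces the $u_2$ formula from the $x_2$ formula by transporting along the braided isomorphism $\psi$ of Lemma~\ref{lema: isomorfismo-V-inverse}. This map identifies $(\Lstr^*_p(q,a),c^*)$ with $(\Lstr_p(q^{-1},a),c^{-1})$, sending $u_1\mapsto x_1$ and $u_2\mapsto -x_2$, and the paper combines it with \cite[Lemma 1.11]{A-Heckenberger-Schneider}.

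If you keep your direct route, first compute $\Delta(u_2^k)$ by hand for $k\le 3$ to fix the signs, then set up and actually verify the recurrence. If you switch to the transport route, track carefully the sign from $u_2\mapsto -x_2$ and the passage to the opposite algebra; that is exactly where the discrepancy above has to be resolved.
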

\pf
The computations are done directly by induction, using the last identity of lemma \ref{lemma:relations1}. The formula for $u_2$ can be obtained from the formula for $x_2$ applying Lemma \ref{lema: isomorfismo-V-inverse} and  \cite[Lemma 1.11]{A-Heckenberger-Schneider}.
\epf

\begin{coro}\label{cor:comultiplication-powers-p-Dtilde2}
	If $p>2$,	then:
	\begin{align*}
		\Delta(x_2^p) &=  \xb^{p}_{2} \ot 1+  \alpha_1^{p}\ot \xb_2^{p},\qquad\qquad \Delta(\ub_2^p)=  \ub^{p}_{2} \ot 1+  \alpha_1^{p}\ot \ub_2^{p} ,\\[.4em]
		\Delta(z_n^p)& =z_n^p\otimes 1 +\sum_{\substack{i+j+2k=n\\0\le j\le 1}}  (-1)^{j} F_{n,i}^p \,\,  x_1^{p j}\xb_{21}^{p k} \alpha_1^{p i} \alpha_2^p \otimes z_i^p,\\		
		\Delta(w_n^p)&=T_2^{p n}T_1^p \ot w_n^p +\sum_{\substack{i+j+2k=n\\ j\in\{0,1\}}} q^{-p(n-i)} F_{n,i}^p\,\,  T_2^{p(n-i)} w_i^p\ot \ub_{21}^{p k} \ub_1^{p j}.
	\end{align*}	
\end{coro}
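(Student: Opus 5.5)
The corollary is the specialization $k=p$ of Lemma \ref{lemma:comultiplication-powers-Dtilde} for $x_2$ and $u_2$, and the $p$-th power of Lemma \ref{lemma:comultiplication-zn-wn-Dtilde} for $z_n$ and $w_n$. In every case I will use the same principle: if $\Delta(y)=\sum_j a_j\ot b_j$ and the summands $a_j\ot b_j$ pairwise commute in $\DLstr(q,a)\ot\DLstr(q,a)$, then the multinomial expansion in characteristic $p$ gives $\Delta(y^p)=\sum_j a_j^p\ot b_j^p$. More generally, if the summands $q$-commute pairwise with scalars that are powers of $q$, then since $\gcd(p,N)=1$ the Gaussian multinomials at $q$ of total degree $p$ vanish mod $p$ except the trivial ones, and the same conclusion holds. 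This is the only characteristic-dependent step. Note also $F_{n,i}^p$ is the Frobenius image of a scalar, consistent with the stated formula.

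\textbf{The $x_2$ formula.} Setting $k=p$ in Lemma \ref{lemma:comultiplication-powers-Dtilde} gives $\binom{p}{i}\equiv 0$ for $0<i<p$. The term $i=0$ is $x_{\{2:0\}}^p\ot 1=x_2^p\ot 1$ and the term $i=p$ is $\alpha_1^p\ot x_2^p$, which yields the claim directly.

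\textbf{The $u_2$ formula.} For $u_2^p$, the multinomial $\binom{p}{k_1,k_2,k_3,k_4}$ vanishes unless a single $k_i=p$. The case $k_1=p$ gives $u_2^p\ot 1$, with $T_2^{0}=1$. The case $k_4=p$ gives $T_2^p\ot u_2^p$. The cases $k_2=p$ and $k_3=p$ give terms with $u_1^p\in J_p^*$. In the cover these latter terms do not vanish, so I expect the main obstacle here. I will need to check that they cancel or are absorbed. The case $k_2=p$ gives $\frac{1}{2^p}[p]^{[p]}\cdots$, and $[X]^{[p]}$ evaluated at $p$ equals $p^p-p\equiv 0$, so that term vanishes. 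The case $k_3=p$ gives $-\frac{1}{2^p}[-2\zeta(a)]^{[p]}T_2^p\ot u_1^p$, which has a nonzero coefficient involving $\zeta(a)^p-\zeta(a)$. I would therefore verify, using $q=1$ or the relation $u_1^2=-2u_{21}$, whether the statement implicitly reads $\Delta(u_2^p)$ modulo this term, or whether the $\alpha_1^p$ should really be $T_2^p$ (equal to $1$ when $q=1$). I will present the computation honestly, including the $[-2\zeta(a)]^{[p]}\ot u_1^p$ correction if it survives.

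\textbf{The $z_n$ and $w_n$ formulas.} I will verify that the summands $z_n\ot 1$ and $x_1^jx_{21}^k\alpha_1^i\alpha_2\ot z_i$ pairwise $q$-commute. The first tensor factors commute up to powers of $q$ by Lemma \ref{lemma:relations1} and $\alpha_1x_1=x_1\alpha_1$, and the second factors satisfy $z_mz_n=q^{m-n}z_nz_m$. The mixed terms with $z_n\ot 1$ use $\alpha_2z_n=z_n\alpha_2$ and $x_1z_n=qz_nx_1$. The harder point is commuting $x_{21}$ past $z_n$, which must be extracted from the relations. The $w_n$ case is symmetric via Lemma \ref{lema: isomorfismo-V-inverse}. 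Applying the quantum Frobenius principle then yields the stated formulas, with $(x_1^jx_{21}^k\alpha_1^i\alpha_2)^p$ rewritten as $x_1^{pj}x_{21}^{pk}\alpha_1^{pi}\alpha_2^p$ up to a $q$-power that is trivial because it is raised to the $p$-th power in a commuting-up-to-root-of-unity setting.
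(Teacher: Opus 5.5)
Your treatment of $x_2^p$ is the same as the paper's. Your suspicion about $\Delta(u_2^p)$ is also correct, and you should commit to it rather than hedge. (A minor slip: in the case $k_2=p$ the factor is $[k_1]^{[k_2]}=[0]^{[p]}$, not $[p]^{[p]}$; it vanishes either way.)

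The case $k_3=p$ contributes $-\tfrac{1}{2^p}[-2\zeta(a)]^{[p]}T_2^p\ot u_1^p=(\zeta(a)^p-\zeta(a))T_2^p\ot u_1^p$, using $2^p=2$ and $(-2)^p=-2$ in $\mathbb F_p$ together with $[X]^{[p]}=X^p-X$. This term does not vanish in $\DLstr(q,a)$. Indeed $\zeta(a)^p-\zeta(a)\neq 0$ in $U(\mathfrak t)$, and $u_1^p=(-2)^{(p-1)/2}u_{21}^{(p-1)/2}u_1$ is a nonzero PBW monomial. So neither $q=1$ nor $u_1^2=-2u_{21}$ removes it.

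You can confirm this without the lemma. Write $\Delta(u_2)=A+X$ with $A=u_2\ot 1$ and $X=(T_2\ot 1)(1\ot u_2+\zeta(a)\ot u_1)$, and set $E_m=u_2T_2^m\ot u_1^m$.
\begin{itemize}
\item One has $[A,X]=E_1$, $[A,E_m]=0$ and $[X,E_m]=-(\tfrac m2+1)E_{m+1}$.
\item Hence $\operatorname{ad}(tA+X)^{p-1}(A)$ is a multiple of $\prod_{m=1}^{p-2}(m+2)=p!/2=0$, and Jacobson's formula gives $(A+X)^p=A^p+X^p$.
\item In the Jordan plane, $\operatorname{ad}(u_2)^{p-1}(u_1)=(-\tfrac12)^{p-1}(p-1)!\,u_1^p=-u_1^p$. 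So for central $\lambda=\zeta(a)\ot 1$ one gets $(u_2+\lambda u_1)^p=u_2^p+(\lambda^p-\lambda)u_1^p$.
\end{itemize}
Therefore $\Delta(u_2^p)=u_2^p\ot 1+T_2^p\ot u_2^p+(\zeta(a)^p-\zeta(a))T_2^p\ot u_1^p$. The second identity as printed, with $\alpha_1^p$ and without the last term, is false in the cover. The paper's one-line appeal to Lemma \ref{lemma:comultiplication-powers-Dtilde} overlooks exactly the term you found. The correction is harmless downstream. Since $a\in\mathbb F_p$, we have $\zeta(a)^p-\zeta(a)=\zeta_1^{[p]}+a\zeta_2^{[p]}$, so for $q=1$ the extra term lies in $M\ot M$. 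It also vanishes in $\mathtt{D_p(q,a)}$, where $u_1^p=0$.

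The genuine gap in your own argument is in the $z_n$, $w_n$ part. The ``quantum Frobenius'' principle you invoke is false. For a root of unity $\xi$ of order $N>1$ prime to $p$, the Gaussian binomials $\binom{p}{k}_\xi$ with $0<k<p$ need not vanish. For example, if $ba=-ab$ in characteristic $3$, then $(a+b)^3=a^3+a^2b+ab^2+b^3$.

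What is needed, and what the paper asserts, is that $b_{-1}=z_n\ot 1$ and the $b_i$ commute exactly. This requires exact exponents, not equality ``up to powers of $q$''. We have $x_1z_n=qz_nx_1$, $x_{21}z_n=q^2z_nx_{21}$, $\alpha_1z_n=qz_n\alpha_1$ and $\alpha_2z_n=q^{-n}z_n\alpha_2$. The last is not $z_n\alpha_2$ as you wrote, unless $n=0$. So $g_i:=x_1^{j_i}x_{21}^{k_i}\alpha_1^{i}\alpha_2$ commutes with $z_n$ precisely because $i+j_i+2k_i=n$. Moreover $g_ig_{i'}=q^{i'-i}g_{i'}g_i$ while $z_iz_{i'}=q^{i-i'}z_{i'}z_i$, so the scalars cancel and $b_ib_{i'}=b_{i'}b_i$. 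The same exact check must be done for the $w_n$ summands; Lemma \ref{lema: isomorfismo-V-inverse} only transports relations inside the Nichols algebra, not those involving $T_j$.

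Finally, the scalar you discard is not trivial. One has $g_i^p=q^{-(n-i)\binom{p}{2}}x_1^{pj_i}x_{21}^{pk_i}\alpha_1^{pi}\alpha_2^p$, and for $N=3$, $p=5$, $n-i=1$ this factor is $q^{-10}=q^{-1}$. It equals $1$ for $q=1$, the only case used in Section \ref{sec:extensions}. So the printed formula should be read either for $q=1$, or with $(x_1^{j}x_{21}^{k}\alpha_1^{i}\alpha_2)^p$ in the first tensorand.
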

\pf 
The first two identities follow from
Lemma~\ref{lemma:comultiplication-powers-Dtilde}. To prove the third
one, for each $i\in\mathbb I_{0,n}$ write $n-i=2k_i+j_i$, $k_i\in\mathbb N_0,\, j_i\in\{0,1\}$ and set
\[b_{-1}:=z_n\otimes1,\qquad b_i:=(-1)^{j_i}F_{n,i}\, x_1^{j_i}\xb_{21}^{k_i} \alpha_1^i\alpha_2\otimes z_i.\]
The elements $b_{-1},b_0,\ldots,b_n$ commute pairwise. Hence,
\[\Delta(z_n^p)=\Delta(z_n)^p=\left(\sum_{i=-1}^n b_i\right)^p=\sum_{i=-1}^n b_i^p,\]
which gives the stated formula. The formula for $\Delta(w_n^p)$ follows analogously.
\vspace{.5cm}
\epf

\subsection{Case $p=2$}
The formulas in Lemmas~\ref{lemma:relations1}, \ref{lemma:relations2}, \ref{lemma:relations3}, \ref{lemma:comultiplication-zn-wn-Dtilde} and
\ref{lemma:comultiplication-powers-Dtilde} were stated under the assumption that $p\neq2$, since some of them involve division by $2$. We explain how some of these identities can be reformulated in characteristic $2$. Since in this case we only need powers of degree at most $4$, we do not seek general formulas for arbitrary powers; instead, the remaining identities will be computed separately.
Nevertheless, many of these identities admit reformulations valid
also in characteristic $2$. In what follows, we interpret 
\[
\binom{2s+2l+1}{2}:=(s+l)(2s+2l+1),
\]
which agrees with the usual expression when $p\neq2$. If $p\neq2$ and $b=2k+j$, where
$k\in\mathbb N_0$ and $j\in\{0,1\}$, then
\begin{align*}
	\frac{[2s]^{[b]}}{2^b}x_1^b
	&=(-1)^k(s+k)^j
	\left[\prod_{l=0}^{k-1}
	\binom{2s+2l+1}{2}\right]
	\xb_1^j\xb_{21}^k,
	\\
	\frac{[2s]^{[b]}}{2^b}u_1^b
	&=(-1)^k(s+k)^j
	\left[\prod_{l=0}^{k-1}
	\binom{2s+2l+1}{2}\right]
	\ub_{21}^k\ub_1^j.
\end{align*}
Thus, expressions of the form appearing on the left-hand side can be
replaced by the corresponding expressions on the right-hand side,
which remain meaningful when $p=2$. For example,
Lemma~\ref{lemma:relations1} yields
\begin{align}\label{eq: x_{2:s}^t}
	\xb_{\{2:s\}}^t
	&=
	\sum_{\substack{i,k\geq0,\;j\in\{0,1\}\\i+j+2k=t}}
	\binom{t}{i}(-1)^k(s+k)^j
	\left[\prod_{l=0}^{k-1}
	\binom{2s+2l+1}{2}\right]
	x_2^ix_1^j\xb_{21}^k.
\end{align}
Under the convention above, this identity is valid in $H_p(q,a)$
for every $p$ and every $s\in\Bbbk$. Furthermore, suppose that $t$ is even. Formally setting
$s=\frac12$ in \eqref{eq: x_{2:s}^t} gives coefficients of the form
\[
\binom{t}{i}\left(\frac12+k\right)^j.
\]
The only potentially problematic case in characteristic $2$ is
$j=1$. In this case, the relation $i+j+2k=t$ implies that $i$ is
odd. Since $t$ is even, $\binom{t}{i}$ is therefore even, and hence
\begin{equation}\label{eq:halfbinomial}
	\binom{t}{i}\left(\frac12+k\right)
	=
	\frac12\binom{t}{i}
	+k\binom{t}{i}
	\in\mathbb N_0.
\end{equation}
Thus, when $t$ is even, the expression $x_{\{2:1/2\}}^t$ can be interpreted in characteristic $2$ by means of \eqref{eq: x_{2:s}^t} and the convention
\eqref{eq:halfbinomial}. Moreover, after formally setting
$s=\frac12$, we have
\[
\binom{2s+2l+1}{2}=\binom{2l+2}{2}=(l+1)(2l+1).
\]
Consequently, in characteristic $2$, $\prod_{l=0}^{k-1}\binom{2l+2}{2}=0$ whenever $k>1$.
It follows that, for $r\geq2$,
\begin{equation}
	x_{\{2:1/2\}}^{2r}
	=
	x_2^{2r-1}(x_2+rx_1)
	+r x_2^{2r-3}
	(x_2+(r-1)x_1)\xb_{21}.
\end{equation}

The same argument shows that the expressions $u_{\{2:s\}}^t$, $w_n^rz_m$ and $w_nz_m^t$ admit well-defined interpretations in characteristic $2$. Other formulas involve terms
of the form
\[
\binom{i+j}{j}x_{\{2:k+i/2\}}^j
\qquad\text{or}\qquad
\binom{i+j}{j}u_{\{2:k+i/2\}}^j.
\]
These expressions are problematic in characteristic $2$
only when both $i$ and $j$ are odd. In this case,
$C:=\binom{i+j}{j}$ is even. Writing $j=2j_0+1$, we obtain

\begin{align*}
	C x_{\{2:k+i/2\}}^j	&=	\left(	Cx_2+\frac{C}{2}(2k+i)x_1	\right)	x_{\{2:k+i/2\}}^{2j_0}.
\end{align*}
Since $C/2\in\mathbb N_0$, this expression is well defined in every
characteristic. The same argument applies to
$C u_{\{2:k+i/2\}}^j$. In particular, the formulas for
$w_nx_2^2$, $u_2^tz_n$ and $u_ix_2^t$ admit well-defined
interpretations in characteristic $2$.

It remains to establish the following identities. For $r\geq1$ and $j\in\{0,1\}$, we have:
\begin{align*}
	\xb_1\xb_2^{2r+j}	&=	\xb_2^{2r+j-2}(\xb_2^2+r\xb_{21})\xb_1	+j\xb_2^{2r-2}(\xb_2^2+r\xb_{21})\xb_{21},	\\
	\ub_2^{2r+j}\ub_1	&=	\ub_1(\ub_2^2+r\ub_{21})\ub_2^{2r+j-2}	+j\ub_{21}(\ub_2^2+r\ub_{21})\ub_2^{2r-2}.
\end{align*}

Finally, consider the comultiplication formulas. A parity argument analogous to that used in \eqref{eq:halfbinomial} shows that the coefficients
$F_{n,i}$ admit well-defined reductions in characteristic $2$. Consequently, Lemma~\ref{lemma:comultiplication-zn-wn-Dtilde}
remains valid when $p=2$. Together with the preceding interpretations, this also yields Lemma~\ref{lemma:comultiplication-powers-Dtilde}.

\end{document}